\documentclass[oneside,english]{amsart}
\usepackage[T1]{fontenc}
\usepackage[latin9]{inputenc}
\usepackage{calc}
\usepackage{amstext}
\usepackage{amsthm}
\usepackage{amssymb}
\usepackage{wasysym}
\usepackage{xargs}[2008/03/08]

\makeatletter
\numberwithin{equation}{section}
\numberwithin{figure}{section}
\theoremstyle{plain}
\newtheorem{thm}{\protect\theoremname}[section]
\theoremstyle{definition}
\newtheorem{defn}[thm]{\protect\definitionname}
\theoremstyle{remark}
\newtheorem{rem}[thm]{\protect\remarkname}
\theoremstyle{plain}
\newtheorem{lem}[thm]{\protect\lemmaname}
\theoremstyle{plain}
\newtheorem{prop}[thm]{\protect\propositionname}
\theoremstyle{definition}
\newtheorem{example}[thm]{\protect\examplename}
\theoremstyle{definition}
\newtheorem{problem}[thm]{\protect\problemname}
\theoremstyle{plain}
\newtheorem{cor}[thm]{\protect\corollaryname}
\theoremstyle{plain}
\newtheorem{fact}[thm]{\protect\factname}


\usepackage{pdflscape}

\usepackage{tikz}

\usepackage{pgfplots}

\usetikzlibrary{arrows}

\usetikzlibrary{automata}

\usetikzlibrary{chains}

\usetikzlibrary{positioning}

\usetikzlibrary{backgrounds}

\usetikzlibrary{calc,decorations.pathreplacing}

\usepackage{amssymb,amsmath}

\pgfplotsset{compat=1.9}


\makeatletter 
\g@addto@macro\@floatboxreset{\centering} 
\makeatother

\usepackage{enumitem}
\setenumerate{ label= (\alph*)}
\setenumerate[2]{ label= (\alph{enumi}\arabic*)} 



\makeatother

\usepackage{babel}
\providecommand{\corollaryname}{Corollary}
\providecommand{\definitionname}{Definition}
\providecommand{\examplename}{Example}
\providecommand{\factname}{Fact}
\providecommand{\lemmaname}{Lemma}
\providecommand{\problemname}{Problem}
\providecommand{\propositionname}{Proposition}
\providecommand{\remarkname}{Remark}
\providecommand{\theoremname}{Theorem}

\begin{document}
\title{Universal co-Extensions of torsion abelian groups}
\author{Alejandro Argud\'in-Monroy}
\email{argudin@ciencias.unam.mx}
\address{Centro de Ciencias Matem\'aticas, Campus Morelia, Universidad Nacional Aut\'onoma de M\'exico, Antigua Carretera a P\'atzcuaro 8701, Colonia Ex Hacienda San Jos\'e de la Huerta, Morelia, Michoac\'an, M\'exico, C.P. 58089}
\thanks{The first named author was supported by a postdoctoral fellowship from DGAPA-UNAM}
\author{Carlos E. Parra}
\email{carlos.parra@uach.cl}
\address{Instituto de Ciencias F\'\i sicas y Matem\'aticas, Edificio Emilio Pugin, Campus Isla Teja, Universidad Austral de Chile, 5090000 Valdivia, CHILE}
\thanks{The second named author was supported by CONICYT/FONDECYT/REGULAR/1200090}
\keywords{universal extension, abelian category, abelian groups, Ext-universal,
abelian torsion group, cotorsion group}
\subjclass[2000]{18g15, 18e10, 20k10, 20k25, 20k35, 20k40}

\maketitle
\newcommandx\Mod[1][usedefault, addprefix=\global, 1=R]{\operatorname{Mod}\left(#1\right)}%

\newcommandx\modd[1][usedefault, addprefix=\global, 1=R]{\operatorname{mod}\left(#1\right)}%

\newcommandx\Ker[1][usedefault, addprefix=\global, 1=M]{\operatorname{Ker}\left(#1\right)}%

\newcommandx\Ann[1][usedefault, addprefix=\global, 1=M]{\operatorname{Ann}#1}%

\newcommandx\im[1][usedefault, addprefix=\global, 1=M]{\operatorname{Im}\left(#1\right)}%

\newcommandx\Cok[1][usedefault, addprefix=\global, 1=M]{\operatorname{Coker}\left(#1\right)}%

\newcommandx\pdr[2][usedefault, addprefix=\global, 1=M, 2=\mathcal{A}]{\mathrm{pd}{}_{#2}\left(#1\right)}%
\newcommandx\idr[2][usedefault, addprefix=\global, 1=M, 2=\mathcal{B}]{\mathrm{id}{}_{#2}\left(#1\right)}%

\newcommandx\smd[1][usedefault, addprefix=\global, 1=\mathcal{M}]{\operatorname{smd}\left(#1\right)}%

\newcommandx\suc[5][usedefault, addprefix=\global, 1=N, 2=M, 3=K, 4=, 5=]{#1\overset{#4}{\hookrightarrow}#2\overset{#5}{\twoheadrightarrow}#3}%

\newcommandx\Ext[4][usedefault, addprefix=\global, 1=M, 2=i, 3=\mathcal{A}, 4=X]{\mathrm{Ext}{}_{#3}^{#2}\left(#1,#4\right)}%

\newcommandx\p[2][usedefault, addprefix=\global, 1=\mathcal{A}, 2=\mathcal{B}]{\left(#1,#2\right)}%

\newcommandx\Hom[3][usedefault, addprefix=\global, 1=\mathcal{A}, 2=M, 3=N]{\mathrm{Hom}{}_{#1}(#2,#3)}%

\newcommandx\End[2][usedefault, addprefix=\global, 1=R, 2=M]{\mathrm{End}{}_{#1}(#2)}%

\newcommandx\proj[1][usedefault, addprefix=\global, 1=R]{\operatorname{proj}\left(#1\right)}%

\newcommandx\Inj[1][usedefault, addprefix=\global, 1=R]{\operatorname{Inj}\left(#1\right)}%

\newcommandx\inj[1][usedefault, addprefix=\global, 1=R]{\operatorname{inj}\left(#1\right)}%

\newcommandx\Proj[1][usedefault, addprefix=\global, 1=R]{\operatorname{Proj}\left(#1\right)}%

\newcommandx\Homk[4][usedefault, addprefix=\global, 1=\mathcal{K}(R), 2=\sigma, 3=\omega, 4=1]{\mathrm{Hom}{}_{#1}(#2,#3[#4])}%

\newcommandx\Add[1][usedefault, addprefix=\global, 1=\mathcal{M}]{\operatorname{Add}\left(#1\right)}%

\newcommandx\add[1][usedefault, addprefix=\global, 1=\mathcal{M}]{\operatorname{add}\left(#1\right)}%

\newcommandx\Gen[1][usedefault, addprefix=\global, 1=M]{\operatorname{Gen}\left(#1\right)}%

\newcommandx\colim[1][usedefault, addprefix=\global, 1=M]{\mathsf{colim}_{\Sigma}\left(#1\right)}%
 
\global\long\def\Fun{\operatorname{Fun}}%
 
\global\long\def\A{\mathcal{A}}%
 
\global\long\def\B{\mathcal{B}}%
 
\global\long\def\col{\mathsf{colim}}%
 
\global\long\def\limite{\mathsf{lim}}%
\global\long\def\Extu{\mathsf{Ext.u.}}%

\global\long\def\C{\mathcal{C}}%
\global\long\def\T{\mathcal{T}}%
\global\long\def\F{\mathcal{F}}%
\global\long\def\Ab{\mathsf{Ab}}%
\global\long\def\G{\mathcal{G}}%

\begin{abstract}
In \cite{parra2021tilting}, a theory of universal extensions in abelian
categories is developed, moreover, the notion of Ext-universal object
is presented. We show that an Ab3 abelian category which is Ext-small,
satisfies the Ab4 condition if, and only if, each object of it is
Ext-universal. In particular, this means that there are torsion abelian
groups that are not co-Ext-universal in the category of torsion abelian
groups. In this sense, we characterize all torsion abelian groups
which are co-Ext-universal in such category. Namely, we show that
such groups are the ones that admit a decomposition $Q\oplus R$,
where $Q$ is injective and $R$ is a reduced group on which each
$p$-component is bounded. 
\end{abstract}

\section{Introduction}

In \cite{Ab}, Grothendieck axiomatizes the desirable properties that
a category must have in order to be studied by means of homological
algebra techniques. Specifically, axioms Ab1 and Ab2 define abelian
categories, Ab3 consists of the existence of arbitrary coproducts, Ab4
the existence and exactness of arbitrary coproducts, and Ab5 the existence
and exactness of direct limits. Additionally, the dual axioms are
denoted as $\mathrm{Ab}n^{\ast}$. Nowadays, an abelian category is said to be a \emph{Grothendieck
category} if it is Ab5 and admits a generator. 

Given their importance, the $\mathrm{Ab}n$ properties have been studied extensively.
For example, it is known that Grothendieck categories are Ab3{*} and
have enough injectives. Moreover, it is known that these categories rarely are Ab4{*} or have enough projectives. A classical result concerning the characterization of the $\mathrm{Ab}4^{\ast}$ condition in Grothendieck categories is a theorem of Roos (see \cite[Theorem 1]{roos1966produits}
or \cite[Corollary 1.4]{roos2006derived}), which states that a
Grothendieck category $\mathcal{G}$ is Ab4{*} if and only if every
object $X\in\mathcal{G}$ admits a \emph{projective effacement} $P\rightarrow X$,
where a projective effacement is an epimorphism $\alpha$ such that
$\Ext[\alpha][1][\mathcal{G}][G]=0$ for all $G\in\mathcal{G}$ (see
Section \ref{sec:eff}). 

A similar (but dual) result has been proved recently by the second named author  with Saor\'in and
Virili in \cite[Proposition 5.9]{parra2021tilting}. In more detail, if the category is Ab4 and $\operatorname{Ext}$-small,
then every object is \emph{$\operatorname{Ext}^{1}$-universal}. Recall
that an object $B$ in an abelian category $\mathcal{A}$ is $\operatorname{Ext}^{1}$-universal
if, for every $A\in\mathcal{A}$, there is a monomorphism $\alpha:A\rightarrow E$
such that $\Cok[\alpha]\cong B^{(X)}$, for some set $X$, and $\Ext[B][1][\mathcal{G}][\alpha]=0$
(see Definition \ref{def:extuniversal}). In this case, the short
exact sequence $\suc[A][E][B^{(X)}][\alpha]$ is called a \emph{universal
extension} of $B$ by $A$. 

In \cite[Theorem 4.8]{argudin2021yoneda}, the first named author
has shown that the condition Ab4 is equivalent to the natural morphism
\[
\Psi:\operatorname{Ext}_{\mathcal{A}}^{1}(\bigoplus_{i\in I}B_{i},A)\rightarrow\prod_{i\in I}\Ext[B_{i}][1][][A],
\]
 being always bijective, for all object $A$ and $(B_i)_{i\in I}$ family of objects in such category (see Theorem \ref{thm:Ab4 vs ext}). The aim
of this paper is to show from this fact that, for $\operatorname{Ext}$-small
abelian categories, the condition Ab4 is equivalent to every object
being $\operatorname{Ext}^{1}$-universal. Moreover, we will revisit the aforementioned result of 
Roos to construct projective effacements from
\emph{universal co-extensions}, these latter being the dual notion of universal
extensions. Having done so, we turn our attention to abelian
categories that are Ab3 but not Ab4. Specifically, we will study
the class of $\operatorname{Ext}^{1}$-universal objects; our main
goal is to find a characterization for this type of objects.
In such a generality, our task seems to be unapproachable. However,
we give a sufficient condition for an object to be $\operatorname{Ext}^{1}$-universal in Lemma \ref{lem.sufficientcondi}. In view of this result, we address our problem in a specific dual situation. Namely,
we characterize the \emph{co-$\operatorname{Ext}^{1}$-universal}
objects in the category of torsion abelian groups (see Theorem \ref{thm:main}).

The paper is organized as follows. In Section \ref{preliminares}
we introduce some notation and some facts about torsion pairs
in the category of abelian groups, Grothendiek categories, and extensions in abelian categories. In particular, we will review the properties
of the natural map $\Psi$ mentioned above, and recall the basic properties
of universal extensions. Among the results of this section,
it is worth to highlight Proposition \ref{prop:pres implica 2 pres},
in which we prove that, in a Grothendieck category $\mathcal{G}$,
for every $X\in\mathcal{G}$ there is a regular cardinal $\lambda$ such that
$\Ext[X][1][\G][-]:\G\to\Ab$ preserves \emph{$\lambda$-directed
colimits}. We will resume this result later when studying projective effacements. 

In Section \ref{Ab4} we study universal extensions in Ab3 abelian
categories. We will see that the existence of a universal extension
of $B$ by $A$ is closely related to the bijectivity of the canonical morphism
\[
\Psi:\operatorname{Ext}_{\mathcal{A}}^{1}(B^{(I)},A)\rightarrow\Ext[B][1][][A]^{I},
\]
for every set $I$ (see Lemma \ref{lem:phi es biyectiva}). This will
allow us to identify a special universal extension $\overline{\eta}$
with the property that the extension group $\operatorname{Ext}_{\mathcal{A}}^{1}(B^{(X)},A)$
turns out to be a cyclic right $\End[\mathcal{A}][B^{(X)}]$-module
generated by $\overline{\eta}$, where $X:=\operatorname{Ext}_{\mathcal{A}}^{1}(B,A)$
(see Theorem \ref{thm:canonica genera ext}). Finally in Theorem \ref{thm:Ab4 y Ext-peque sii ext universales}
we will see that an Ext-small Ab3 abelian category is Ab4 if and only
if every pair of objects in such category admits a universal extension.

In Section \ref{sec:eff} we study the projective effacements
in a Grothendieck category. Specifically, in Proposition \ref{prop:lambda-p2 vs proj eff}
we will use universal co-extensions to construct a projective effacement.
As a corollary, we obtain a new proof of Roos' theorem (see Corollary
\ref{cor:roos}). 

Section \ref{Ab3} is devoted to the study of the $\operatorname{Ext}^{1}$-universal
objects in an Ab3 abelian category. In particular,
we prove that the class of $\operatorname{Ext}^{1}$-universal
objects is closed under coproducts and direct summands (see Corollary
\ref{cor:coproducto de universales} and Corollary \ref{cor:sumandodirectodeext-universal}).

Finally, in Section \ref{torsion} we will seek for a characterization
of the co-$\operatorname{Ext}^{1}$-universal objects in the category $\mathcal{T}_{Z}$
of abelian torsion groups. We achieve the desired characterization in two steps. In the step one, in
Proposition \ref{lem:caracterizacion_existencia_coextuniv} we will
characterize the pairs of objects in $\mathcal{T}_{Z}$ that admit a universal co-extension. This will lead us to show that $\bigoplus_{n\geq1}\mathbb{Z}(p^{n})$
is not co-$\operatorname{Ext}^{1}$-universal in $\mathcal{T}_{Z}$ (see Corollary \ref{exa:contraejemplo }). In step two, exploiting the previous step and some facts from \emph{basic} \emph{subgroups theory}, we will show in Theorem \ref{thm:main} that a torsion group $V$ is a
co-$\operatorname{Ext}^{1}$-universal object in $\mathcal{T}_{Z}$
if and only if its reduced $p$-components are bounded.

\section{Preliminaries\label{preliminares}}

\subsection{Abelian groups}

We will denote by $\Ab$  the category of abelian groups. Recall that
$\Ab$ is a hereditary abelian category, that is, $\Ext[M][k][\Ab][N]=0$
for all $M,N\in\Ab$ and $k>1$. In particular, this means that every
quotient of an injective abelian group is also injective. We also
recall that every $G\in\Ab$ is expressed as a direct sum $G=D\oplus R$,
where $D$ is injective and $R$ is \textbf{reduced} (which means
that $R$ has no nonzero injective subobjects). A group $G\in\Ab$
is \textbf{bounded} if there is an integer $n\geq1$ such that $0=nG:=\{ng\,|\:g\in G\}$.
Finally, we point out that, for every positive integer $n$, $\mathbb{Z}(n)$
will denote the cyclic group of order $n$. For additional notation
and results the reader is referred to \cite{fuchs1960abelian,fuchs2015abelian}.

\subsection{Regular cardinals, directed sets and directed colimits}

Recall that a cardinal $\lambda$ is \textbf{regular} if it is an
infinite cardinal which is not a sum of less than $\lambda$ cardinals,
all smaller than $\lambda$. In such case, a poset is called \textbf{$\lambda$-directed}
if every subset of cardinality smaller than $\lambda$ has an upper
bound. Moreover, a $\aleph_{0}$-directed poset is called \textbf{directed
set}. On the other hand, recall that a category is \textbf{small}
when the isomorphism classes of its objects form a set. In this sense,
if $D$ is a $\lambda$-directed poset, for some regular cardinal
$\lambda$, then $D$ can be viewed as a small category whose objects
are the elements of $D$ and there is a unique morphism $\alpha\to\beta$
exactly when $\alpha\leq_{D}\beta$.

In what follows $D$ and $\C$ will denote a small category and a
category, respectively. A functor $D\to\C$ will be called a \textbf{$D$-diagram}
on $\C$. In this setting, the $D$-diagrams on $\C$ together with
the respective natural transformations form a category, which will
be denoted by $\Fun(D,\C)$. Moreover, the assignment $C\mapsto\kappa^{D}(C)$
underlies a functor $\kappa^{D}:\C\to\Fun(D,\C)$, where $\kappa^{D}(C)$
is the $D$-diagram on $\C$ such that $d\mapsto C$ and $f\mapsto1_{C}$,
for each object $d$ and morphism $f$ of $D$, and for each morphism
$h$ in $\C$, $\kappa_{h}^{D}:=\kappa^{D}(h)$ denote the natural
transformation given by the family of morphisms $(\kappa_{h,d}^{D})_{d\in D}$
with $\kappa_{h,d}^{D}=h$, for all $d\in D$. Such functor is called
the \textbf{constant diagram functor}.

Let $F$ be a functor of $\Fun(D,\C)$, $C$ be an object of $\C$,
and $\rho:F\to\kappa^{D}(C)$ be a natural transformation. Then, we
will say that the pair $(C,\rho)$ is a \textbf{$D$-colimit} of $F$
when the following condition holds: for each natural transformation
$\tau:F\to\kappa^{D}(C')$, there is a unique morphism $f:C\to C'$
in $\C$ such that $\kappa_{f}^{D}\circ\rho=\tau$. In such case,
we use the following notation $C=\mathsf{colim}_{D}(F)$ or $C=\mathsf{colim}_{D}F(d)$.
Now, if each $D$-diagram on $\C$ has a $D$-colimit, we say that
$\C$ is \textbf{$D$-co-complete}. In this case, the functor $\col_{D}:\Fun(D,\C)\to\C$
associating each $D$-diagram to its colimit is the left adjoint to
the constant diagram functor $\kappa^{D}$. Moreover, the category
$\C$ is called \textbf{co-complete} when it admits $D$-colimits
for every small category $D$. 
Dually, we say that the category $\C$ is \textbf{$D$-complete} (resp.
\textbf{complete}) if $\C^{op}$ is $D$-co-complete (resp. co-complete),
for $D$ a small category. We say that a category is \textbf{bicomplete}
if it is complete and co-complete. Lastly, let $\lambda$ be a regular
cardinal and suppose that $D$ is a $\lambda$-directed poset. A $D$-diagram
on $\C$ is called a \textbf{$\lambda$-directed diagram}, and its
colimit (whenever it exists) is called a \textbf{$\lambda$-directed
colimit}. 

\subsection{Grothendieck categories}

In the sequel $\A$ is an abelian category. Given $A,B\in\mathcal{A}$,
the symbol $\Ext[A][1][][B]$ denotes the class of all equivalence
classes of the short exact sequences in $\A$ of the form $\epsilon:\suc[B][E][A][\,][\,]$,
where $\hookrightarrow$ denotes a monomorphism in $\mathcal{A}$
and $\twoheadrightarrow$ an epimorphism in $\mathcal{A}$. In this
sense, we will denote by $\overline{\epsilon}$ the respective equivalence
class associated to $\epsilon$. We recall that, in general, the class
$\Ext[A][1][][B]$ is not necessarily a set (see \cite[Chapter 6, Exercise A]{freyd}).
When $\Ext[A][1][][B]$ is a set for all $A,B\in\A$, the abelian
category $\A$ is called \textbf{Ext-small}. Now, the morphisms in
$\mathcal{A}$ induce additive assignments between the extensions
in $\A$ via pullback and pushout. Namely, for each $f\in\Hom[\mathcal{A}][X][A]$
the assignment $\overline{\eta}\mapsto\overline{\eta\cdot f}$ (resp.
$\overline{\eta}\mapsto\overline{f\cdot\eta}$) induce an additive
`map' from $\Ext[A][1][][B]$ (resp. $\Ext[B][1][][X]$) to $\Ext[X][1][][B]$
(resp. $\Ext[B][1][][A]$).

On the other hand, recall that the category $\Fun(D,\A)$ is an abelian
category, for each small category $D$. Let us recall the `hierarchy'
among abelian categories introduced by Grothendieck (see \cite{Ab}).
Concretely, we say that $\A$ is: 
\begin{itemize}
\item \textbf{Ab3 }if all set-indexed coproducts exist in $\A$ (equivalently,
if it is co-complete); 
\item \textbf{Ab4 }if it is Ab3 and the functors $\col_{D}$ are exact,
for each set $D$ viewed as the (small) category whose objects are
the elements of $D$ and the only morphisms in it are the identity
morphisms; 
\item \textbf{Ab5 }if it is Ab3 and the functors $\col_{D}$ are exact,
for each $D$ directed set viewed as a small category. 
\item \textbf{Grothendieck} if it is Ab5 and it has a \textbf{generator},
i.e. an object $G$ in $\A$ such that the functor $\Hom[\A][G][-]$
is faithful.
\end{itemize}
We will denote by \textbf{Abn{*}} the property dual to Abn for each
$n\in\{3,4,5\}$. Recall that a Grothendieck category is automatically
bicomplete and it has enough injectives (see \cite[Corollary 2.8.9, Corollary 3.7.10, Theorem 3.10.10]{Popescu}). 
\begin{defn}
Let $\G$ be a Grothendieck category and let $\lambda$ be a regular
cardinal. Then, an object $X$ in $\G$ is said to be \textbf{$\lambda$-presentable}
if $\Hom[\G][X][-]:\G\to\mathsf{Ab}$ preserves $\lambda$-directed
colimits. In such case, we say that $X$ is \textbf{$\lambda$-FP2}
if $\Ext[X][1][\G][-]:\G\to\Ab$ preserves $\lambda$-directed colimits. 
\end{defn}

\begin{rem}
If $\lambda=\omega$, then $\lambda$-presentable objects are called
finitely presented and $\lambda$-FP2 objects are called FP2 (see
\cite{bravo2019locally}). For additional information on $\lambda$-presentable
objects, the reader is referred to \cite{adamek1994locally}.
\end{rem}

Note that every $\lambda$-presentable (resp. $\lambda$-FP2) object
in $\G$ is also $\mu$-presentable (resp. $\mu$-FP2), for each regular
cardinal $\mu>\lambda$. Now, we recall the following fact.
\begin{rem}
\label{rem:ordinal-cardinal-G} It is well-known that, for every Grothendieck
category $\G$, there exist a regular cardinal $\lambda_{\G}$ and
a set $\mathcal{S}$ of $\lambda_{\G}$-presentable objects in $\G$
such that every object in $\G$ is a $\lambda_{\G}$-directed colimit
of objects in $\mathcal{S}$ (i.e. the image of the respective $\lambda_{\G}$-directed
diagram lives in $\mathcal{S}$) (see \cite[Lemma 2.5.16]{krause2021homological}).
In this case, we have that every object in $\G$ is a $\mu$-directed
colimit of objects which are $\mu$-presentable, for each regular
cardinal $\mu>\lambda_{G}$ (see \cite[Lemma 2.5.13]{krause2021homological}). 
\end{rem}

The following two results are inspired in \cite[Lemma 2.2]{enochs2004flat}
and \cite[Proposition  2.3]{enochs2004flat} with the only difference
that we replace colimits of functors $F:\mu\rightarrow\mathcal{G}$,
that have an ordinal $\mu$ as their domain, with $\lambda$-directed
colimits where $\lambda$ is a regular cardinal. While the proof is
essentially the same, we consider worthwhile to state them as follows
because, if $\lambda>\omega$, the notion of $\lambda$-presentable
object cannot be defined using only colimits of functors $F:\mu\rightarrow\mathcal{A}$
that have an ordinal $\mu$ as their domain (see \cite[p.22]{adamek1994locally}). 
\begin{lem}
\label{lem: colimite lambda directo de inyectivos}Let $\mathcal{G}$
be a Grothendieck category. If we denote by $\Inj[\G]$ the class
of all injective objects in $\G$, then there exists a regular cardinal
$\lambda$ such that every $\lambda$-directed colimit of objects
in $\Inj[\G]$ is also an injective object of $\G$. 
\end{lem}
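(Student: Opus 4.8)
The plan is to detect injectivity through a Baer-type criterion relative to a fixed generator, and then to use presentability to reduce the relevant extension problem to a single stage of the directed system. Fix a generator $G$ of $\G$. Recall that, in a Grothendieck category, an object $E$ is injective if and only if for every subobject $G'\subseteq G$ each morphism $G'\to E$ extends along the inclusion $G'\hookrightarrow G$ to a morphism $G\to E$ (the Baer criterion; see \cite{Popescu}). Since $\G$ is well-powered, the subobjects of $G$ form a set, and this is what will allow a uniform choice of the cardinal $\lambda$.

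For that choice, I would use that every object of $\G$ is $\kappa$-presentable for some regular cardinal $\kappa$ (see \cite{adamek1994locally} and Remark \ref{rem:ordinal-cardinal-G}). Applying this to each member of the set $\{G'\mid G'\subseteq G\}$, and recalling that a $\kappa$-presentable object is also $\mu$-presentable for every regular $\mu>\kappa$, one may select a single regular cardinal $\lambda$ such that every subobject $G'\subseteq G$ is $\lambda$-presentable; that is, $\mathrm{Hom}_{\G}(G',-)$ preserves $\lambda$-directed colimits. Note only the subobjects of $G$ need be controlled, not $G$ itself.

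With $\lambda$ so chosen, fix a $\lambda$-directed diagram $(E_d)_{d\in D}$ of injective objects, with structure maps $u_d\colon E_d\to E$ into its colimit $E=\col_{D}E_d$, and verify Baer's criterion for $E$. Given $G'\subseteq G$ and $f\colon G'\to E$, the $\lambda$-presentability of $G'$ makes the canonical map $\col_{D}\mathrm{Hom}_{\G}(G',E_d)\to\mathrm{Hom}_{\G}(G',E)$ bijective, so $f$ factors as $f=u_{d_0}\circ g$ for some $d_0\in D$ and some $g\colon G'\to E_{d_0}$. Since $E_{d_0}$ is injective, $g$ extends along $G'\hookrightarrow G$ to some $\tilde{g}\colon G\to E_{d_0}$, and then $u_{d_0}\circ\tilde{g}\colon G\to E$ restricts on $G'$ to $u_{d_0}\circ g=f$. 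Hence every such $f$ extends to $G$, and $E$ is injective.

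The crux of the argument is the passage from a morphism $f\colon G'\to E$ to a factorization through some stage $E_{d_0}$; this is exactly where $\lambda$-presentability of $G'$ enters, and it is the reason the cardinal must be chosen uniformly across the (set of) subobjects of the generator. The remaining steps are formal. A secondary point deserving care is the precise form of the Baer criterion in a Grothendieck category, together with the compatibility of restriction-to-$G'$ with the colimit structure maps, which guarantees that the extension $\tilde{g}$ indeed restricts back to $g$ and hence that $u_{d_0}\circ\tilde{g}$ restricts to $f$.
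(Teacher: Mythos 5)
Your proof is correct and takes essentially the same approach as the paper's: choose a single regular cardinal $\lambda$ so that every subobject of a fixed generator $G$ is $\lambda$-presentable (possible because the subobjects of $G$ form a set), and then verify the Baer criterion for the colimit $E$ by factoring any morphism $G'\to E$ through some stage $E_{d_0}$ and extending there using injectivity of $E_{d_0}$. The only cosmetic difference is that the paper organizes this step as an epimorphism transfer across a commutative square of Hom-groups (which also uses $\lambda$-presentability of $G$ itself), whereas you extend directly at the stage $d_0$; the two arguments are interchangeable.
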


\begin{proof}
It is a known fact that every object in $\G$ is $\lambda$-presentable,
for some regular cardinal $\lambda$ (e.g. see Lemma 2.5.13 and Proposition
2.5.16 in \cite{krause2021homological}). Let $G$ be a generator
of $\mathcal{G}$. Since the lattice $\mathcal{L}(G):=\{\text{subobjects of }G\}$
is a set, there exist a regular cardinal $\lambda$ such that every
$S\in\mathcal{L}(G)$ is $\lambda$-presentable. Now, let $F:D\to\G$
be a $\lambda$-directed diagram on $\G$ such that each $F(d)$ is
an injective object of $\G$, for all $d\in D$. For each $S\in\mathcal{L}(G)$,
we consider the following commutative diagram in $\G$, where $\iota_{S}:S\to G$
denote the respective inclusion:\\
\noindent\begin{minipage}[t]{1\columnwidth}%
\[
\begin{tikzpicture}[-,>=to,shorten >=1pt,auto,node distance=1.5cm,main node/.style=,x=6cm,y=1.5cm]

   \node (1)  at (0,0)   {$\operatorname{Hom}_{\mathcal{G}}(G,\mathsf{colim}_D F(d))$};
   \node (2)  at (1,0)          {$\operatorname{Hom}_{\mathcal{G}}(S,\mathsf{colim}_D F(d))$};
   \node (3)  at (0,1)          {$\mathsf{colim}_D \operatorname{Hom}_{\mathcal{G}}(G,F(d))$};
   \node (4)  at (1,1)          {$\mathsf{colim}_D \operatorname{Hom}_{\mathcal{G}}(S,F(d))$};

\draw[->, thin]  (1)  to [below] node  {$\scriptstyle \operatorname{Hom}_{\G}(\iota_{S},\mathsf{colim}_{D}(F(d)))$} (2);
\draw[<-, thin]  (2)  to  node  {$$} (4);
\draw[<-, thin]  (1)  to  node  {$$} (3);
\draw[->, thin]  (3)  to  node  {$\scriptstyle \mathsf{colim}_{D}(\operatorname{Hom}_{\G}(\iota_{S},F(d)))$} (4);

\end{tikzpicture}
\]%
\end{minipage}\\
\medskip{}

Notice that $\mathsf{colim}_{D}(\operatorname{Hom}_{\G}(\iota_{S},F(d)))$
is an epimorphism since each $F(d)$ is an injective object of $\G$,
for all $d\in D$. And hence,  $\operatorname{Hom}_{\G}(\iota_{S},\mathsf{colim}_{D}(F(d)))$
is also an epimorphism (recall that $G$ and $S$ are $\lambda$-presentables).
Therefore, it follows from \cite[Proposition V.2.9]{ringsofQuotients}
that $\mathsf{colim}_{D}F(d)$ is an injective object of $\G$. 
\end{proof}
Finally, we highlight the following fact. 

\begin{prop}
\label{prop:pres implica 2 pres}Let $\mathcal{G}$ be a Grothendieck
category, $\lambda$ be a regular cardinal, and $X\in\mathcal{G}$.
If $X$ is $\lambda$-presentable, then there is a regular cardinal
$\mu\geq\lambda$ such that $X$ is $\mu$-FP2. 
\end{prop}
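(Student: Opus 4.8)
The plan is to realize $\Ext[X][1][\G][-]$ as the first right derived functor of $\Hom[\G][X][-]$ and to compute it by resolving not each object of the diagram separately but the whole directed diagram at once; this is what circumvents the non-functoriality of injective resolutions. First I would use Lemma \ref{lem: colimite lambda directo de inyectivos} to fix a regular cardinal $\mu\geq\lambda$ such that every $\mu$-directed colimit of objects of $\Inj[\G]$ is again injective (replacing $\mu$ by the maximum of $\lambda$ and the cardinal provided by that lemma). Since $\mu\geq\lambda$, the object $X$ remains $\mu$-presentable. It then suffices to fix a $\mu$-directed poset $D$ and a diagram $F\in\Fun(D,\G)$ with $d\mapsto M_{d}$ and $M:=\col_{d\in D}M_{d}$, and to prove that the canonical map $\col_{d\in D}\Ext[X][1][\G][M_{d}]\to\Ext[X][1][\G][M]$ is an isomorphism.

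Second, I would pass to the functor category $\Fun(D,\G)$, which is again a Grothendieck category and hence has enough injectives, and choose there an injective resolution $0\to F\to\mathcal{I}^{0}\to\mathcal{I}^{1}\to\cdots$ of $F$. The crucial observation is that, for each $d\in D$, the evaluation functor $\mathrm{ev}_{d}\colon\Fun(D,\G)\to\G$ preserves injectives: because $D$ is a poset, the left Kan extension along $\{d\}\hookrightarrow D$, which is the left adjoint of $\mathrm{ev}_{d}$, sends an object $G$ to the diagram equal to $G$ on objects $d'\geq d$ and to $0$ elsewhere, and this functor is exact; hence its right adjoint $\mathrm{ev}_{d}$ carries injectives to injectives. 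Consequently $0\to M_{d}\to\mathcal{I}^{0}(d)\to\mathcal{I}^{1}(d)\to\cdots$ is an injective resolution of $M_{d}$ in $\G$, for every $d\in D$.

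Third, I would apply $\col_{d\in D}$ to this resolution. By the Ab5 exactness of $\G$ the complex $0\to M\to\col_{d}\mathcal{I}^{0}(d)\to\col_{d}\mathcal{I}^{1}(d)\to\cdots$ is exact, and by the choice of $\mu$ each term $\col_{d}\mathcal{I}^{j}(d)$ is injective; hence it is an injective resolution of $M$, so $\Ext[X][1][\G][M]=H^{1}(\Hom[\G][X][\col_{d}\mathcal{I}^{\bullet}(d)])$. Using that $X$ is $\mu$-presentable, the canonical comparison gives an isomorphism of complexes $\Hom[\G][X][\col_{d}\mathcal{I}^{\bullet}(d)]\cong\col_{d}\Hom[\G][X][\mathcal{I}^{\bullet}(d)]$, and since filtered colimits are exact in $\Ab$ they commute with $H^{1}$. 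Combining these,
\[
\Ext[X][1][\G][M]\cong H^{1}\bigl(\col_{d}\Hom[\G][X][\mathcal{I}^{\bullet}(d)]\bigr)\cong\col_{d}H^{1}\bigl(\Hom[\G][X][\mathcal{I}^{\bullet}(d)]\bigr)=\col_{d}\Ext[X][1][\G][M_{d}],
\]
which is precisely the statement that $X$ is $\mu$-FP2.

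I expect the main obstacle to be the second step: verifying cleanly that $\Fun(D,\G)$ is Grothendieck with enough injectives and that the evaluation functors preserve injective objects, so that the chosen injective resolution of the diagram restricts to an objectwise injective resolution. Once these structural facts are secured, the remainder is a formal manipulation of the cohomology machinery. A secondary point requiring care is that every isomorphism above is the canonical comparison morphism, so that the displayed composite really is the natural map $\col_{d}\Ext[X][1][\G][M_{d}]\to\Ext[X][1][\G][M]$ and not merely an abstract isomorphism.
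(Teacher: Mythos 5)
Your proof is correct, but it takes a genuinely different route from the paper's. The paper never builds an injective resolution: it fixes an injective cogenerator $E$ of $\mathcal{G}$ and uses the functorial monomorphism $\nu\colon 1_{\mathcal{G}}\to\gamma$, where $\gamma(M)=E^{\operatorname{Hom}_{\mathcal{G}}(M,E)}$, to produce a single short exact sequence of diagrams $F\hookrightarrow\gamma\circ F\twoheadrightarrow C$ whose middle term is objectwise injective by construction; applying $\mathsf{colim}_{D}$ (exact by Ab5, with $\mathsf{colim}_{D}(\gamma\circ F)$ injective by Lemma \ref{lem: colimite lambda directo de inyectivos}) and then $\operatorname{Hom}_{\mathcal{G}}(X,-)$ yields a map of four-term exact sequences ending in $\operatorname{Ext}^{1}$, and the Five Lemma together with the $\mu$-presentability of $X$ concludes. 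You replace this one-step dimension shift by a full injective resolution of $F$ in $\operatorname{Fun}(D,\mathcal{G})$, which is exactly why you need the extra structural facts you flag: that $\operatorname{Fun}(D,\mathcal{G})$ is Grothendieck (hence has enough injectives) and that evaluation functors preserve injectives because their left adjoints, the left Kan extensions along $\{d\}\hookrightarrow D$, are exact; both facts are true, and your description of the Kan extension on a poset and its exactness (objectwise it is either the identity or zero) are right. The trade-off is clear: the paper's construction gets objectwise injectivity for free from the cogenerator, so it needs nothing about injective objects of the functor category and is shorter; your construction pays for those structural verifications but proves more, since running your argument with $H^{n}$ in place of $H^{1}$ shows that $\operatorname{Ext}^{n}_{\mathcal{G}}(X,-)$ preserves $\mu$-directed colimits for every $n\geq 0$, whereas the paper's argument as written yields only the case $n=1$. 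Your closing concern about canonicity is handled by observing that the structural morphisms $\mathcal{I}^{\bullet}(d)\to\mathsf{colim}_{D}\mathcal{I}^{\bullet}$ lift the structural morphisms $M_{d}\to M$, so the maps they induce on $H^{1}$ are precisely the canonical maps $\operatorname{Ext}^{1}_{\mathcal{G}}(X,M_{d})\to\operatorname{Ext}^{1}_{\mathcal{G}}(X,M)$; note also that your choice of $\mu$ agrees with the paper's $\mu=\max\{\lambda,\kappa\}$.
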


\begin{proof}
Let $X$ be a $\lambda$-presentable object of $\G$ and let $E$
be an injective cogenerator of $\G$. Consider the functor $\gamma:\G\to\G$,
mapping an object $X$ to the product $E^{\operatorname{Hom}_{\G}(X,E)}$.
In this case, such functor comes with a natural transformation $\nu:1_{\G}\to\gamma$,
which is monomorphic. On the other hand, let $\kappa$ be a regular
cardinal as in Lemma \ref{lem: colimite lambda directo de inyectivos}
and set $\mu:=\max\{\lambda,\kappa\}$. We claim that $X$ is $\mu$-FP2.
For this, it is enough to check that, the functor $\Ext[X][1][\G][-]$
preserves $\mu$-directed colimits. Indeed, let $F:D\to\G$ be a $\mu$-directed
diagram on $\G$ and consider the following short exact sequence in
$\Fun(D,\G)$ 
\[
\suc[F][\gamma\circ F][C][\nu_{F}]\text{,}
\]
where $\nu_{F}$ is the natural transformation induced by $\nu$.
Using now the Ab5 condition of $\G$, we get the following short exact
sequence in $\G$: 
\[
\suc[\col_{D}F][\col_{D}\left(\gamma\circ F\right)][\col_{D}C]\text{,}
\]
Applying the functor $(X,-):=\operatorname{Hom}_{\G}(X,-)$ and considering
the fact that $\mathsf{colim}_{D}(\gamma\circ F)\in\Inj[\G]$ (by
Lemma \ref{lem: colimite lambda directo de inyectivos}), we obtain
the following commutative diagram with exact rows, where $^{1}(X,-):=\operatorname{Ext}_{\G}^{1}(X,-)$:
\\
\noindent\begin{minipage}[t]{1\columnwidth}%
\[
\begin{tikzpicture}[-,>=to,shorten >=1pt,auto,node distance=1.5cm,main node/.style=,x=3.2cm,y=1cm]

   \node (1)  at (0,0)        {$(X,\mathsf{colim}_D F)$};
   \node (2)  at (1,0)        {$(X,\mathsf{colim}_D (\gamma \circ F))$};
   \node (3)  at (2,0)        {$(X,\mathsf{colim}_D C)$};
   \node (4)  at (3,0)        {$^{1}(X,\mathsf{colim}_D F)$};

   \node (1')  at (0,1)      {$\mathsf{colim}_D (X,F)$};
   \node (2')  at (1,1)      {$\mathsf{colim}_D (X,\gamma \circ F)$};
   \node (3')  at (2,1)      {$\mathsf{colim}_D (X,C)$};
   \node (4')  at (3,1)      {$\mathsf{colim}_D ( ^{1}(X,F))$};

\draw[right hook->, thin]  (1)  to  node  {$$} (2);
\draw[->, thin]  (2)  to  node  {$$} (3);
\draw[->>, thin]  (3)  to  node  {$$} (4);

\draw[->, thin]  (1')  to  node  {$$} (1);
\draw[->, thin]  (2')  to  node  {$$} (2);
\draw[->, thin]  (3')  to  node  {$$} (3);
\draw[->, thin]  (4')  to  node  {$$} (4);

\draw[right hook->, thin]  (1')  to  node  {$$} (2');
\draw[->, thin]  (2')  to  node  {$$} (3');
\draw[->>, thin]  (3')  to  node  {$$} (4');

\end{tikzpicture}
\]%
\end{minipage}\\
where the three vertical morphisms on the left are isomorphisms (since
$X$ is $\mu$-presentable) and therefore all vertical morphisms are
isomorphisms by the Five Lemma, as desired.
\end{proof}

\subsection{Torsion theories in $\protect\Ab$}

We are interested in highlighting some properties of the hereditary
torsion classes in $\Ab$. Such properties are crucial for our purposes. 
\begin{defn}
A \textbf{torsion pair} in $\Ab$ is a pair $\mathbf{t}:=(\T,\F)$
of full subcategories such that $\T=\{X\in\Ab:\operatorname{Hom}_{\Ab}(X,F)=0,\text{ for all }F\in\F\}$
and $\F=\{X\in\Ab:\operatorname{Hom}_{\Ab}(T,X)=0,\text{ for all }T\in\T\}$.
In such case, for each $X\in\Ab$ there is a (functorial) exact sequence
in $\Ab$ of the form: 
\[
\suc[T_{X}][X][F_{X}][\iota_{X}][\pi_{X}]
\]
where $T_{X}\in\T$ and $F_{X}\in\F$. The \textbf{torsion radical}
(resp. \textbf{coradical}) of $\mathbf{t}$ is the endofunctor on
$\Ab$ given by the assignment $X\mapsto T_{X}$ (resp. $X\mapsto F_{X}$).
Finally, $\mathcal{T}$ (resp. $\mathcal{F}$) is called the \textbf{torsion}
(resp. \textbf{torsionfree}) \textbf{class} with respect to $\mathbf{t}$. 
\end{defn}

A torsion pair $\mathbf{t}=(\mathcal{T,F})$ in $\Ab$ is called: 
\begin{enumerate}
\item \textbf{hereditary} if $\mathcal{T}$ is closed under subgroups (or,
equivalently, if $\mathcal{F}$ is closed under injective envelopes); 
\item \textbf{of finite type} if $\mathcal{F}$ is closed under directed
colimits, i.e. every $\aleph_{0}$-directed colimit of objects in
$\F$ is in $\mathcal{F}$. 
\end{enumerate}
Let us recall the following result that characterizes the hereditary
torsion pairs in $\Ab$ (see \cite[Corollary 3.11, Proposition 2.1]{bravo2019t},
\cite[Lemma 4.2]{silting2017}, and \cite[Chapter VI, Sections 5 and 6]{ringsofQuotients}).
Recall that $\mathsf{Spec}(\mathbb{Z})$ is the set of prime ideals
of $\mathbb{Z}$, $\mathsf{Supp}(M)$ is the set of prime ideals $P$
such that the localization of $M$ at $P$ is not zero, and $Z\subseteq\mathsf{Spec}(\mathbb{Z})$
is a \textbf{sp-subset} if for any prime ideals $P\subseteq Q$, with
$P\in Z$, we have that $Q\in Z$. 
\begin{prop}
Let $\mathbf{t}=(\mathcal{T},\mathcal{F})$ be a torsion pair in $\Ab$.
Then, the following assertions are equivalent: 
\begin{enumerate}
\item $\mathbf{t}$ is a hereditary torsion pair; 
\item $\mathbf{t}$ is of finite type;
\item $\mathcal{T}=\mathcal{T}_{Z}:=\{M\in\Ab:\mathsf{Supp}(M)\subseteq Z\}$,
for some sp-subset $Z$ of $\mathsf{Spec}(\mathbb{Z})$. 
\end{enumerate}
\end{prop}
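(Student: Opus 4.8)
The plan is to prove the cycle (1) $\Rightarrow$ (3) $\Rightarrow$ (2) $\Rightarrow$ (1), freely using the standard closure properties of a torsion pair: the torsion class $\mathcal{T}$ is closed under quotients, coproducts and extensions, while the torsionfree class $\mathcal{F}$ is closed under subobjects, products and extensions. I will also use that $\Ab$ is hereditary and Ab5 (so directed colimits are exact), that injective $=$ divisible, and that from $0\to\mathbb{Z}(p)\to\mathbb{Z}(p^{n})\to\mathbb{Z}(p^{n-1})\to0$ one builds every finite $p$-group out of $\mathbb{Z}(p)$ by extensions. A computation I would isolate first as a lemma is: if $\mathbb{Z}(p)\in\mathcal{T}$, then every $p$-group lies in $\mathcal{T}$ — indeed $\mathbb{Z}(p^{n})\in\mathcal{T}$ by extensions, and any $p$-group is, via a choice of generators, a quotient of a coproduct $\bigoplus_{i}\mathbb{Z}(p^{n_{i}})$, hence in $\mathcal{T}$.

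For (1) $\Rightarrow$ (3) I would set $Z:=\{P\in\mathsf{Spec}(\mathbb{Z}):\mathbb{Z}/P\in\mathcal{T}\}$. To see that $Z$ is an sp-subset, note that the only nontrivial specialization in $\mathsf{Spec}(\mathbb{Z})$ is $(0)\subseteq(p)$, and if $(0)\in Z$ then $\mathbb{Z}\in\mathcal{T}$, whence its quotient $\mathbb{Z}(p)\in\mathcal{T}$, i.e. $(p)\in Z$. For $\mathcal{T}\subseteq\mathcal{T}_{Z}$, take $M\in\mathcal{T}$ and $P\in\mathsf{Supp}(M)$; a nonzero element of $M$ not annihilated after localizing at $P$ generates a cyclic subgroup $C\subseteq M$, which lies in $\mathcal{T}$ by heredity and whose infinite cyclic or prime-power shape forces $\mathbb{Z}/P$ to be a quotient of $C$, so $\mathbb{Z}/P\in\mathcal{T}$ and $P\in Z$. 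For $\mathcal{T}_{Z}\subseteq\mathcal{T}$, if $(0)\in Z$ then $\mathbb{Z}\in\mathcal{T}$ and, since every group is a quotient of a free group, $\mathcal{T}=\Ab$; otherwise every $M\in\mathcal{T}_{Z}$ is torsion with $p$-components only for $p\in S:=\{p:(p)\in Z\}$, and the isolated lemma gives $M=\bigoplus_{p}M_{p}\in\mathcal{T}$.

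For (3) $\Rightarrow$ (2) I would first identify $\mathcal{F}$. When $(0)\in Z$ one has $\mathcal{T}=\Ab$ and $\mathcal{F}=0$, trivially of finite type; when $(0)\notin Z$, since $\mathcal{T}_{Z}$ is generated by $\{\mathbb{Z}(p):p\in S\}$ and $\operatorname{Hom}_{\Ab}(\mathbb{Z}(p),F)=F[p]$, one checks $\mathcal{F}=\{F:F[p]=0\text{ for all }p\in S\}$. As $F[p]=\ker(p\cdot\mathrm{id}_{F})$ and directed colimits are exact in $\Ab$, the condition $F[p]=0$ is preserved by directed colimits, so $\mathcal{F}$ is closed under them.

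The main obstacle is (2) $\Rightarrow$ (1). It suffices to show $\mathcal{T}$ is closed under subgroups, and since any subgroup of $T\in\mathcal{T}$ is the image of the coproduct of its cyclic subgroups, it is enough to show every cyclic subgroup $C\subseteq T$ lies in $\mathcal{T}$; splitting off prime-power parts reduces this to $C\cong\mathbb{Z}$ or $C\cong\mathbb{Z}(p^{k})$. If such a $C\notin\mathcal{T}$, one extracts inside $T$ a copy of $\mathbb{Z}$ or of the simple $\mathbb{Z}(p)$ that lies in $\mathcal{F}$, using that a simple object lies in $\mathcal{T}$ or in $\mathcal{F}$, that $\mathcal{F}$ is closed under subobjects, and that $C\cong\mathbb{Z}$ with $C\notin\mathcal{T}$ forces $C\in\mathcal{F}$. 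The crux is then: since $\mathcal{F}$ is closed under extensions, all $\mathbb{Z}(p^{n})$ (resp. all copies of $\mathbb{Z}$) lie in $\mathcal{F}$, so their directed colimits $\mathbb{Z}(p^{\infty})=\operatorname{colim}_{n}\mathbb{Z}(p^{n})$ and $\mathbb{Q}=\operatorname{colim}(\mathbb{Z}\xrightarrow{2}\mathbb{Z}\xrightarrow{3}\cdots)$ lie in $\mathcal{F}$ by the finite type hypothesis. But $\mathbb{Z}(p^{\infty})$ and $\mathbb{Q}$ are injective, so the inclusion of $\mathbb{Z}(p)$ (resp. $\mathbb{Z}$) into $T$ extends to a nonzero morphism $T\to\mathbb{Z}(p^{\infty})$ (resp. $T\to\mathbb{Q}$), contradicting $\operatorname{Hom}_{\Ab}(\mathcal{T},\mathcal{F})=0$. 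Hence $C\in\mathcal{T}$, and $\mathcal{T}$ is hereditary. I expect the delicate points to be the bookkeeping in (1) $\Rightarrow$ (3) and, in (2) $\Rightarrow$ (1), producing the correct $\mathcal{F}$-subobject of $T$ on which the extended map is visibly nonzero.
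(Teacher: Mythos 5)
Your proof is correct, but note that the paper itself contains no proof of this proposition: it is quoted as a known result, with pointers to Stenstr\"om's \emph{Rings of Quotients} (Ch.~VI, \S\S 5--6) and to the other cited references, where the equivalences come out of general theory (hereditary torsion pairs over a commutative noetherian ring correspond to sp-subsets of the spectrum, and hereditary is equivalent to finite type in that generality). So your argument is necessarily a different route: a self-contained, $\mathbb{Z}$-specific one, and all three legs check out. In $(a)\Rightarrow(c)$ the cyclic-subgroup analysis of $\mathsf{Supp}$ is right, and your isolated lemma (a torsion class containing $\mathbb{Z}(p)$ contains every $p$-group, via extensions, coproducts and quotients) gives $\mathcal{T}_{Z}\subseteq\mathcal{T}$. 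In $(c)\Rightarrow(b)$ the identification $\mathcal{F}=\{F:F[p]=0\text{ for all }p\in S\}$ is valid because $\mathcal{T}_{Z}$ is the smallest torsion class containing the $\mathbb{Z}(p)$ with $p\in S$, and exactness of directed colimits in $\mathsf{Ab}$ then preserves the condition $\ker(p\cdot 1_{F})=0$. Your $(b)\Rightarrow(a)$ is also sound, and its mechanism is worth making explicit: since $\operatorname{Hom}_{\mathsf{Ab}}(T,\mathbb{Z}(p))$ can vanish even when $\mathbb{Z}(p)\subseteq T$, one cannot contradict $T\in\mathcal{T}$ directly; passing to the injective envelopes $\mathbb{Z}(p^{\infty})=\mathsf{colim}_{n}\,\mathbb{Z}(p^{n})$ and $\mathbb{Q}=\mathsf{colim}(\mathbb{Z}\xrightarrow{\,2\,}\mathbb{Z}\xrightarrow{\,3\,}\cdots)$, which lie in $\mathcal{F}$ by extension-closure of $\mathcal{F}$ plus the finite-type hypothesis, is exactly what makes the Hom-vanishing contradiction available --- in effect you are reproving, for the two relevant envelopes, the standard criterion that heredity amounts to $\mathcal{F}$ being closed under injective envelopes. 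What your elementary route buys is a transparent, reference-free proof tailored to $\mathsf{Ab}$; what the paper's citations buy is brevity and generality, since the same classification holds over any commutative noetherian ring.
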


The following examples will be crucial in the paper. 
\begin{example}
\label{exa:tor-her-classic} Let $p$ be a prime number and, we put
$Z:=\mathsf{Spec}(\mathbb{Z})\setminus\{0\}$ and $Z_{p}:=\{p\mathbb{Z}\}$.
Note that $Z$ and $Z_{p}$ are sp-subsets of $\mathsf{Spec}(\mathbb{Z})$.
In these cases, we have the following: 
\begin{enumerate}
\item the class $\mathcal{T}_{Z}=\{M\in\Ab:M\text{ is a torsion group}\}$
is the torsion class of a hereditary torsion pair in $\Ab$; 
\item the class $\mathcal{T}_{p}:=\mathcal{T}_{Z_{p}}=\{M\in\Ab:\forall m\in M,\exists n_{m}\geq0\text{ such that }o(m)=p^{n_{m}}\}$
is the torsion class of a hereditary torsion pair in $\Ab$. 
\end{enumerate}
\end{example}

We finish this subsection with the following remark and example. 
\begin{rem}
\label{rem:Tor-her-Gro} Every torsion class of a hereditary torsion
pair in $\Ab$ is clearly an abelian exact subcategory of $\Ab$.
Moreover, it is a Grothendieck category, where the coproducts in it
are computed as in $\Ab$, but the products in it are not computed
as in $\Ab$ (see Remark \ref{rem:propertiesoft} and \cite[Corollary 4.3(1,3)]{parra2021tilting}). 
\end{rem}

\begin{example}
\label{exam:no-Ab4*} The Grothendieck categories $\mathcal{T}_{Z}$
and $\mathcal{T}_{Z_{p}}$ are not an Ab4{*} abelian category. Indeed,
note that the product in $\mathcal{T}_{Z}$ (resp. $\mathcal{T}_{p}$)
of the family of the canonical epimorphisms $\{f_{n}:\mathbb{Z}_{p^{n}}\to\mathbb{Z}_{p}\}_{n\geq1}$
can not be surjective. 
\end{example}


\subsection{Extensions, coproducts, and products\label{subsec:ext,prod,coprod}}

Let $\mathcal{A}$ be an abelian category and let $\{A_{i}\}_{i\in I}$
be a set of objects in $\mathcal{A}$ whose product (resp. coproduct)
exists in $\A$. Now, we denote by $\pi_{i}^{A}:\prod_{i\in I}A_{i}\to A_{i}$
(resp. $\mu_{i}^{A}:A_{i}\to\coprod_{i\in I}A_{i}$) the associated
$i$-th projection (resp. inclusion). When there is an object $A$
in $\A$ such that $A_{i}=A$, for all $i\in I$, there exists a unique
morphism $\Delta_{I}^{A}:A\rightarrow A^{I}$ (resp. $\nabla_{I}^{A}:A^{(I)}\rightarrow A$)
such that $\pi_{i}^{A}\circ\Delta_{I}^{A}=1_{A}$ (resp. $\text{\ensuremath{\nabla_{I}^{A}}}\circ\mu_{i}^{A}=1_{A}$).
We will refer to such morphism as the $I$-\textbf{diagonal} (resp.
$I$-\textbf{co-diagonal}) morphism associated to $A$. On the other
hand, for each object $B$ in $\A$ there is a natural map: 
\begin{gather*}
\Psi:\operatorname{Ext}_{\mathcal{A}}^{1}(\bigoplus_{i\in I}A_{i},B)\rightarrow\prod_{i\in I}\Ext[A_{i}][1][][B]\\
\left(\mbox{resp. }\Phi:\operatorname{Ext}_{\mathcal{A}}^{1}(B,\prod_{i\in I}A_{i})\rightarrow\prod_{i\in I}\Ext[B][1][][A_{i}]\right)
\end{gather*}
defined as $\Psi(\overline{\epsilon}):=\left(\overline{\epsilon\cdot\mu_{i}^{A}}\right)_{i\in I}$
(resp. $\Phi(\overline{\epsilon}):=(\overline{\pi_{i}^{A}\cdot\epsilon})_{i\in I}$).
Now, we recall the main result in \cite{argudin2021yoneda}. 

It is important to mention three aspects of the map $\Psi$. The first
one: $\Psi$ is always injective, for all set of objects in $\A$
(see \cite[Lemma 4.2]{argudin2022exactness}); the second one: when
$\Psi$ is bijective, we can exhibit the correspondence rule of $\Psi^{-1}$,
as shown in the following lemma; and the third one: when $\mathcal{A}$
is Ab3, $\mathcal{A}$ is Ab4 if and only if $\Psi$ is always bijective,
as shown in the theorem below. 
\begin{lem}
\label{lem:la extension colimite mediante la codiagonal }\label{lem:colim}\label{lem:la inversa}
Let $\mathcal{A}$ be an Ab3 (resp. Ab3{*}) abelian category, $A$
be an object in $\mathcal{A}$, $\{B_{i}\}_{i\in I}$ be a set of
objects in $\mathcal{A}$, $\eta=\left\{ \eta_{i}:\:\suc[A][E_{i}][B_{i}][f_{i}][g_{i}]\right\} _{i\in I}$
be a set of short exact sequences in $\mathcal{A}$, and $\nabla_{I}^{A}$
be the $I$-co-diagonal morphism. Consider the following pushout diagram.
\\
\noindent\begin{minipage}[t]{1\columnwidth}%
\[ 
\begin{tikzpicture}[-,>=to,shorten >=1pt,auto,node distance=2cm,main node/.style=,x=1.2cm,y=1.2cm]

   \node[main node] (1) at (0,0)      {$$};     
  \node[main node] (2) [right of=1]  {$A^{(I)}$};     
  \node[main node] (3) [right of=2]  {$\bigoplus E_i $};    
   \node[main node] (4) [right of=3]  {$\bigoplus B_i $}; 
  \node[main node] (5) [right of=4]  {$$};

   \node[main node] (1') at (0,-1)       {$$};    
   \node[main node] (2') [right of=1']  {$A$};    
   \node[main node] (3') [right of=2']  {$Z_\eta$};   
    \node[main node] (4') [right of=3']  {$\bigoplus B_i $};   
    \node[main node] (5') [right of=4']  {$$};


\draw[->, thin]   (2)  to node  {$\scriptstyle \bigoplus  f_i$}  (3);
\draw[->>, thin]   (3)  to node  {$\scriptstyle \bigoplus g_i$}  (4);


\draw[->, thin]   (2')  to node  {$\scriptstyle f_\eta$}  (3');
\draw[->>, thin]   (3')  to node  {$\scriptstyle g_\eta$}  (4');
\draw[->, thin]   (2)  to node  {$\nabla ^{A} _I$}  (2');
\draw[->, thin]   (3)  to node  {$\nabla '$}  (3');
\draw[-, double]   (4)  to node  {$$}  (4');

    \end{tikzpicture}
\]%
\end{minipage}\\
Then, the following statements hold true: 
\begin{enumerate}
\item There is a short exact sequence $\eta':\:\suc[A][X][\bigoplus_{i\in I}B_{i}][f'][g']$
such that $\Psi(\overline{\eta'})=(\overline{\eta_{i}})_{i\in I}$
if and only if $f_{\eta}$ is a monomorphism. 
\item If $f_{\eta}$ is a monomorphism, then any short exact sequence $\eta'$
satisfying that $\Psi(\overline{\eta'})=(\overline{\eta_{i}})_{i\in I}$
is equivalent to $\eta'':\:\suc[A][Z_{\eta}][\bigoplus_{i\in I}B_{i}][f_{\eta}][g_{\eta}]$. 
\item If $\mathcal{A}$ is Ab4, then $\Psi^{-1}((\overline{\eta}_{i})_{i\in I})=\overline{\nabla_{I}^{A}\cdot\left(\bigoplus_{i\in I}\eta_{i}\right)}$. 
\end{enumerate}
\end{lem}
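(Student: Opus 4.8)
The plan is to dispose of the three parts in the order (1,$\Leftarrow$), (1,$\Rightarrow$), (2), (3), since the computation behind the ``if'' direction of (1) feeds directly into (2) and (3). Throughout, let $\mu_i^E\colon E_i\to\bigoplus E_j$, $\mu_i^A\colon A\to A^{(I)}$, and $\mu_i^B\colon B_i\to\bigoplus B_j$ denote the canonical coproduct inclusions, so that $\nabla_I^A\circ\mu_i^A=1_A$.

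First I would build a commutative ladder from each $\eta_i$ to the bottom row. Setting $h_i:=\nabla'\circ\mu_i^E\colon E_i\to Z_\eta$, the pushout relations give $h_i\circ f_i=\nabla'\circ(\bigoplus f_j)\circ\mu_i^A=f_\eta\circ\nabla_I^A\circ\mu_i^A=f_\eta$ and $g_\eta\circ h_i=g_\eta\circ\nabla'\circ\mu_i^E=(\bigoplus g_j)\circ\mu_i^E=\mu_i^B\circ g_i$, so $(1_A,h_i,\mu_i^B)$ is a morphism of sequences from $\eta_i$ to the bottom row. Assume now $f_\eta$ is a monomorphism. Since the pushout preserves the cokernel $\bigoplus B_i$ of $\bigoplus f_i$, one has $g_\eta=\mathrm{Coker}(f_\eta)$, hence $\eta''\colon A\overset{f_\eta}{\hookrightarrow}Z_\eta\overset{g_\eta}{\twoheadrightarrow}\bigoplus B_i$ is a genuine short exact sequence. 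The ladder above, being the identity on $A$, then exhibits $\eta_i$ as the pullback $\eta''\cdot\mu_i^B$ by the standard characterization of pullback extensions, so $\Psi(\overline{\eta''})=(\overline{\eta_i})_{i\in I}$. This already proves (1,$\Leftarrow$), with $\eta'=\eta''$.

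For (1,$\Rightarrow$), assume a short exact sequence $\eta'\colon A\overset{f'}{\hookrightarrow}X\overset{g'}{\twoheadrightarrow}\bigoplus B_i$ with $\Psi(\overline{\eta'})=(\overline{\eta_i})_{i\in I}$. Each $\eta_i\cong\eta'\cdot\mu_i^B$, so again by the characterization of pullbacks there are morphisms $\phi_i\colon E_i\to X$ fitting into ladders $(1_A,\phi_i,\mu_i^B)\colon\eta_i\to\eta'$; in particular $\phi_i\circ f_i=f'$ for all $i$. The induced map $\bigoplus\phi_i\colon\bigoplus E_i\to X$ and the map $f'$ are compatible with the pushout cone, since $(\bigoplus\phi_i)\circ(\bigoplus f_i)=\bigoplus(\phi_i\circ f_i)=f'\circ\nabla_I^A$; hence the universal property of the pushout yields $\psi\colon Z_\eta\to X$ with $\psi\circ f_\eta=f'$. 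As $f'$ is a monomorphism and factors through $f_\eta$, the map $f_\eta$ is a monomorphism. I expect this to be the main obstacle: the crux is arranging the $\phi_i$ coherently so that the pushout factorization forces $f_\eta$ to inherit monicity from $f'$.

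Part (2) is then immediate from the fact that $\Psi$ is injective for every family (cited): when $f_\eta$ is a monomorphism, any $\eta'$ with $\Psi(\overline{\eta'})=(\overline{\eta_i})_{i\in I}=\Psi(\overline{\eta''})$ satisfies $\overline{\eta'}=\overline{\eta''}$, i.e.\ $\eta'$ is equivalent to $\eta''$. For (3), suppose $\mathcal{A}$ is Ab4. Then the coproduct functor is exact, so $\bigoplus f_i$ is a monomorphism, and since the pushout of a monomorphism along any morphism is again a monomorphism in an abelian category, $f_\eta$ is a monomorphism. By (1) and (2), $\overline{\eta''}$ is the unique class with $\Psi$-image $(\overline{\eta_i})_{i\in I}$, and by construction the bottom row is the pushout of $\bigoplus\eta_i$ along $\nabla_I^A$, so $\overline{\eta''}=\overline{\nabla_I^A\cdot(\bigoplus_{i\in I}\eta_i)}$; this gives $\Psi^{-1}((\overline{\eta_i})_{i\in I})=\overline{\nabla_I^A\cdot(\bigoplus_{i\in I}\eta_i)}$. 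Finally, the Ab3${}^\ast$ versions of all three statements follow by dualizing the entire argument, replacing coproducts, pushouts, $\nabla_I^A$, and $\Psi$ by products, pullbacks, $\Delta_I^A$, and $\Phi$.
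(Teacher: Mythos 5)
Your proposal is correct and follows essentially the same route as the paper's proof: the forward implication of (a) via ladders $(1_A,\phi_i,\mu_i^B)$, the coproduct-induced map into $X$, and the pushout universal property to factor $f'$ through $f_\eta$; the converse via the composite ladder using $\nabla_I^A\circ\mu_i^A=1_A$; part (b) from the cited injectivity of $\Psi$; and part (c) from exactness of $\eta''$ under Ab4. The only (harmless) deviations are that you make explicit the preservation of cokernels and monomorphisms by pushouts, which the paper leaves implicit, and in (c) you deduce uniqueness from (a) and (b) rather than invoking the Ab4 bijectivity theorem.
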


\begin{proof}
$ $
\begin{enumerate}
\item $(\Rightarrow)$ Suppose that $\Psi(\overline{\eta'})=(\overline{\eta_{i}})_{i\in I}$.
Thus, we have the following commutative diagram, for all $i\in I$:\\
\noindent\begin{minipage}[t]{1\columnwidth}%
\[ \begin{tikzpicture}[-,>=to,shorten >=1pt,auto,node distance=2cm,main node/.style=,x=1.2cm,y=1.2cm]
   \node[main node] (0) at (-1,0)      {$\eta _i:$}; 
      \node[main node] (1) at (0,0)      {$A$};    
   \node[main node] (2) [right of=1]  {$E_i$};  
     \node[main node] (3) [right of=2]  {$B_i$};

   \node[main node] (0') at (-1,-1)   {$\eta':$};   
    \node[main node] (1') at (0,-1)   {$A$};     
  \node[main node] (2') [right of=1']  {$X$};    
   \node[main node] (3') [right of=2']  {$\bigoplus _{i\in I} B_i $};

\draw[right hook->, thin]   (1)  to node  {$f_i$}  (2);
\draw[->>, thin]   (2)  to node  {$g_i$}  (3);
\draw[right hook->, thin]   (1')  to node  {$f'$}  (2');
\draw[->>, thin]   (2')  to node  {$g'$}  (3');
\draw[-, double]   (1)  to node  {$$}  (1');
\draw[->, thin]   (2)  to node  {$\mu  _i'$}  (2');
\draw[->, thin]   (3)  to node  {$\mu ^B _i$}  (3');
    \end{tikzpicture} \]
\end{minipage}\\
Now, by the universal property of coproducts there is a morphism $\gamma\in\Hom[][\bigoplus_{i\in I}E_{i}][X]$
such that $\mu'_{i}=\gamma\circ\mu_{i}^{E}$, for all $i\in I$. In
particular, note that 
\[
\gamma\circ\left(\bigoplus_{i\in I}f_{i}\right)\circ\mu_{i}^{A}=\gamma\circ\mu_{i}^{E}\circ f_{i}=\mu'_{i}\circ f_{i}=f'=f'\circ\nabla_{I}^{A}\circ\mu_{i}^{A},\;\ \forall i\in I\mbox{.}
\]
Once again from the universal property of coproducts, we get that
$\gamma\circ\left(\bigoplus_{i\in I}f_{i}\right)=f'\circ\nabla_{I}^{A}$.
Then, by the universal property of pushouts, there is a morphism $\gamma':Z_{\eta}\rightarrow X$
such that $\gamma'\circ f_{\eta}=f'$ and $\gamma'\circ\nabla'=\gamma$.
And hence, $f_{\eta}$ is a monomorphism since $f'$ is a monomorphism.
\\
 $(\Leftarrow)$ Observe that we have the following commutative diagram
with exact rows, for all $i\in I$:\\
\noindent\begin{minipage}[t]{1\columnwidth}%
\[ 
\begin{tikzpicture}[-,>=to,shorten >=1pt,auto,node distance=2.5cm,main node/.style=,x=1.2cm,y=1.2cm]
       \node[main node] (2) at (0,0)  {$A^{(I)}$};
       \node[main node] (3) [right of=2]  {$\bigoplus E_i $}; 
      \node[main node] (4) [right of=3]  {$\bigoplus B_i $};    

     \node[main node] (2') at (0,-1)   {$ A$};     
  \node[main node] (3') [right of=2']  {$ Z_\eta$};   
    \node[main node] (4') [right of=3']  {$\bigoplus B_i $};   

 \node[main node] (2'') at (0,1)   {$A$};      

\node[main node] (3'') [right of=2'']  {$E_i$};    
   \node[main node] (4'') [right of=3'']  {$B_i $};    
   \node[main node] (A) at (-1,-1)  {$\eta '':$};    
   \node[main node] (B) at (-1,1)  {$\eta _i :$};
\draw[->, thin]   (2)  to node  {$\scriptstyle \bigoplus  f_i$}  (3);
\draw[->>, thin]   (3)  to node  {$\scriptstyle \bigoplus  g_i$}  (4);
\draw[right hook->, thin]   (2')  to node  {$\scriptstyle   f_\eta$}  (3');
\draw[->>, thin]   (3')  to node  {$\scriptstyle   g_\eta$}  (4');
\draw[right hook->, thin]   (2'')  to node  {$\scriptstyle   f_i$}  (3'');
\draw[->>, thin]   (3'')  to node  {$\scriptstyle  g_i$}  (4'');
\draw[->, thin]   (2)  to node  {$  \nabla^{A}_I$}  (2');
\draw[->, thin]   (3)  to node  {$  \nabla '$}  (3');
\draw[-, double]   (4)  to node  {$$}  (4');
\draw[->, thin]   (2'')  to node  {$\mu_i ^{ A}$}  (2);
\draw[->, thin]   (3'')  to node  {$\mu_i ^E$}  (3);
\draw[->, thin]   (4'')  to node  {$\mu_i ^B$}  (4);
    \end{tikzpicture} 
\]
\end{minipage} \\
 Then, since that the top and the bottom rows are short exact sequences
together with the fact that $\nabla_{I}^{A}\circ\mu_{i}^{A}=1_{A}$
$\forall i\in I$, it follows that $\Psi(\overline{\eta''})=(\overline{\eta_{i}})_{i\in I}$. 
\item It was proved above that $\Psi(\overline{\eta''})=(\overline{\eta_{i}})_{i\in I}$
when $f_{\eta}$ is monic. Therefore, (b) follows from \cite[Lemma 4.2]{argudin2022exactness}. 
\item When $\A$ is Ab4, we get that the sequence $\eta'':\:\suc[A][Z_{\eta}][\bigoplus B_{i}][f_{\eta}][g_{\eta}]$
is exact and, by the proof of the item (a), $\Psi(\overline{\eta''})=(\overline{\eta_{i}})_{i\in I}$.
Therefore, by Theorem \ref{teo:secondmain}, $\Psi^{-1}((\overline{\eta_{i}})_{i\in I})=\overline{\eta''}=\overline{\nabla_{I}^{A}\cdot\bigoplus_{i\in I}\eta_{i}}$. 
\end{enumerate}
\end{proof}
\begin{thm}
\cite[Theorem 4.8]{argudin2021yoneda}\label{thm:Ab4 vs ext}\label{teo:secondmain}
Let $\mathcal{A}$ be an Ab3 (resp. Ab3{*}) abelian category. Then,
$\mathcal{A}$ is Ab4 (resp. Ab4{*}) if and only if $\Psi$ (resp.
$\Phi$) is always bijective, for any set of objects in $\A$. 
\end{thm}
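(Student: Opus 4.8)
The plan is to reduce the entire statement to the pushout description of preimages under $\Psi$ given in Lemma \ref{lem:colim}, combined with the fact that $\Psi$ is always injective in an Ab3 category (\cite[Lemma 4.2]{argudin2022exactness}). I will argue the ``$\Psi$ versus Ab4'' assertion; the ``$\Phi$ versus Ab4{*}'' assertion follows by applying it in $\A^{op}$. Since $\Psi$ is injective, bijectivity is the same as surjectivity, and by Lemma \ref{lem:colim}(a) a family $(\overline{\eta_i})_{i\in I}$, with $\eta_i:A\hookrightarrow E_i\twoheadrightarrow B_i$ of common kernel $A$, lies in the image of $\Psi$ if and only if the pushout morphism $f_\eta:A\to Z_\eta$ is a monomorphism. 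Thus the theorem reduces to: $\A$ is Ab4 if and only if $f_\eta$ is a monomorphism for every family of short exact sequences with common kernel.

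For the direction Ab4 $\Rightarrow$ bijective, note that in an Ab4 category the coproduct functor is exact, so $\bigoplus_i f_i:A^{(I)}\to\bigoplus_i E_i$ is a monomorphism. By construction $f_\eta$ is the pushout of $\bigoplus_i f_i$ along the codiagonal $\nabla_I^A$, and in any abelian category the pushout of a monomorphism along an arbitrary morphism is again a monomorphism. Hence $f_\eta$ is monic, so $(\overline{\eta_i})_{i\in I}$ has a preimage; therefore $\Psi$ is surjective, and being injective, bijective.

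For the converse I argue by contraposition: assuming $\A$ is not Ab4 I exhibit a family with $f_\eta$ not monic. Failure of Ab4 yields monomorphisms $g_i:A_i\hookrightarrow D_i$, completed to short exact sequences $\epsilon_i:A_i\hookrightarrow D_i\twoheadrightarrow B_i$, for which $\bigoplus_i g_i$ has a nonzero kernel $\kappa:K_0\hookrightarrow\bigoplus_i A_i$. Set $A:=\bigoplus_i A_i$ and push each $\epsilon_i$ out along the split coproduct inclusion $\iota_i=\mu_i^A:A_i\to A$ to obtain short exact sequences $\tilde\eta_i:A\hookrightarrow\tilde D_i\twoheadrightarrow B_i$ of common kernel $A$, hence a family $(\overline{\tilde\eta_i})_{i\in I}$ in the codomain of $\Psi$. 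The key is then a diagram computation. The split monomorphism $s:\bigoplus_i A_i\to A^{(I)}$ that places the $i$-th summand $A_i$ into the $i$-th copy of $A$ (via $\iota_i$) satisfies $(\bigoplus_i\tilde g_i)\circ s=t\circ(\bigoplus_i g_i)$, where $t:\bigoplus_i D_i\hookrightarrow\bigoplus_i\tilde D_i$ is the canonical split monomorphism; hence $s\circ\kappa$ factors through $K:=\ker(\bigoplus_i\tilde g_i)$. Crucially, the defining property $\nabla_I^A\circ\mu_i^A=\mathrm{id}_A$ of the codiagonal gives $\nabla_I^A\circ s=\mathrm{id}_A$, so $\nabla_I^A$ restricts to $\kappa\neq 0$ on $s(K_0)\subseteq K$ and therefore does not vanish on $K$. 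Since $\ker f_\eta=\nabla_I^A\!\left(\ker(\bigoplus_i\tilde g_i)\right)$ for this pushout, we conclude $\ker f_\eta\neq 0$, so $f_\eta$ is not a monomorphism. By Lemma \ref{lem:colim}(a), $(\overline{\tilde\eta_i})_{i\in I}$ is not in the image of $\Psi$, contradicting surjectivity; hence bijectivity of $\Psi$ forces Ab4.

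The main obstacle is precisely this converse. Surjectivity of $\Psi$ only delivers that the pushout map $f_\eta$ is monic, which is a priori much weaker than the Ab4 requirement that $\bigoplus_i f_i$ itself be monic: indeed $\ker f_\eta$ is merely the image of $\ker(\bigoplus_i f_i)$ under $\nabla_I^A$, and $\ker\nabla_I^A$ is large, so a nonzero $\ker(\bigoplus_i f_i)$ could in principle be annihilated by $\nabla_I^A$. The reduction to common-kernel families by pushing out along coproduct inclusions is what closes this gap, its whole value residing in the identity $\nabla_I^A\circ s=\mathrm{id}_A$, which guarantees that the kernel produced by a genuine failure of Ab4 survives the application of $\nabla_I^A$.
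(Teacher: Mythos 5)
Your proof is correct, but there is no in-paper proof to compare it against: the paper imports Theorem \ref{thm:Ab4 vs ext} verbatim from \cite[Theorem 4.8]{argudin2021yoneda}, and the only internal ingredients it offers are Lemma \ref{lem:colim}(a) and the injectivity of $\Psi$ from \cite[Lemma 4.2]{argudin2022exactness}, both of which you invoke exactly as intended. Your forward implication (in Ab4 the map $\bigoplus_i f_i$ is monic, the pushout of a monomorphism along any morphism is monic, hence $f_\eta$ is monic and Lemma \ref{lem:colim}(a) gives surjectivity) is essentially the computation the paper performs inside the proof of Lemma \ref{lem:la inversa}(c). The substance is your converse, and its key device is exactly what is needed: a failure of Ab4 produces monomorphisms $g_i\colon A_i\hookrightarrow D_i$ with $K_0:=\ker(\bigoplus_i g_i)\neq0$ (here you should remark that the coproduct functor, being a left adjoint, is automatically right exact, so Ab4 can only fail on monomorphisms), and the mismatch between the varying kernels $A_i$ and the common-kernel format demanded by $\Psi$ is repaired by pushing each $\epsilon_i$ out along $\mu_i^A\colon A_i\to A=\bigoplus_j A_j$; the identity $\nabla_I^A\circ s=\mathrm{id}_A$ is precisely what prevents the codiagonal from annihilating the kernel.

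One step deserves tightening. You invoke the equality $\ker f_\eta=\nabla_I^A\bigl(\ker(\bigoplus_i\tilde g_i)\bigr)$; this is a true general fact about pushouts in abelian categories, but it is not proved in the paper and would need its own (pseudo-element or embedding) argument. You only need the trivial half of it: writing $k\colon K\to A^{(I)}$ for the kernel of $\bigoplus_i\tilde g_i$ and $s\circ\kappa=k\circ j$ for the factorization you constructed, one has $\nabla_I^A\circ k\circ j=\nabla_I^A\circ s\circ\kappa=\kappa\neq0$, so $\nabla_I^A\circ k\neq0$, while commutativity of the pushout square gives $f_\eta\circ(\nabla_I^A\circ k)=\nabla'\circ(\bigoplus_i\tilde g_i)\circ k=0$, so the nonzero morphism $\nabla_I^A\circ k$ factors through $\ker f_\eta$; hence $f_\eta$ is not monic and Lemma \ref{lem:colim}(a) yields the desired contradiction. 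With that adjustment (and a word noting that $s$ and $t$ are coproducts of split monomorphisms, so that the relation $(\bigoplus_i\tilde g_i)\circ s=t\circ(\bigoplus_i g_i)$ is just the coproduct of the individual pushout squares $\tilde g_i\circ\mu_i^A=d_i\circ g_i$), your argument is complete, including the statement for $\Phi$, which follows by applying everything in $\mathcal{A}^{op}$.
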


\subsection{Universal Extensions\label{subsec:Universal-Extensions}}
\begin{defn}
\cite[Definition 5.6]{parra2021tilting}\label{def:extension universal}
Let $\mathcal{A}$ be an abelian category and $A,B\in\mathcal{A}$.
A \textbf{universal extension} of $B$ by $A$ is a short  exact sequence
\[
\suc[A][E][B^{(X)}][u][p]
\]
in $\mathcal{A}$, for some non-empty set $X$, such that one of the
following equivalent statements hold true:
\begin{enumerate}
\item $\Ext[B][1][][u]:\Ext[B][1][][A]\rightarrow\Ext[B][1][][E]$ is the
zero morphism, 
\item $\Ext[B][1][][p]:\Ext[B][1][][E]\rightarrow\Ext[B][1][][B^{(X)}]$
is injective, 
\item the connection morphism $\delta:\Hom[][B][B^{(X)}]\rightarrow\Ext[B][1][][A]$
is surjective. 
\end{enumerate}
\end{defn}

For the keen reader, the term \emph{extension} in the above definition
may not seem precise. Perhaps a more appropriate definition would
be that a universal extension is the equivalence class of an exact
sequence that satisfies any of the above conditions. However, for
the sake of simplicity, we will keep the above definition. 
\begin{rem}
\label{rem:extensiones universales} Let $\mathcal{A}$ be an abelian
category and $A,B\in\mathcal{A}$.
\begin{enumerate}
\item If $B$ is projective, then every exact sequence $\suc[A][E][B^{(X)}][u][p]$
is a universal extension of $B$ by $A$. 
\item \cite[Proposition  5.7]{parra2021tilting} If $\Ext[A][1][][B]$ is
a finitely generated $\End[\mathcal{\mathcal{A}}][B]$-module, then
there is a universal extension of $B$ by $A$. 
\item \cite[Proposition  5.9]{parra2021tilting} If $\mathcal{A}$ is Ab4
and $\Ext[A][1][][B]$ is a set, then there is a universal extension
of $B$ by $A$. Indeed, let $\{\eta_{i}:\;\suc[A][E_{i}][B][f_{i}][g_{i}]\}_{i\in I}$
be a complete set of representatives of $\Ext[B][1][][A]$. It follows
from Theorem \ref{thm:Ab4 vs ext} and Lemma \ref{lem:la inversa}
that an exact sequence representing $\Psi^{-1}\left((\overline{\eta}_{i})_{i\in I}\right)$
is a universal extension of $B$ by $A$. 
\item \cite[Proposition  5.9]{parra2021tilting} If there is a universal
extension of $B$ by $A$, then $\Ext[B][1][][A]$ is a set. Indeed,
this follows from condition (c) of Definition \ref{def:extension universal}.
Examples of abelian categories where one can find objects $A$ and
$B$ such that $\Ext[B][1][][A]$ is not a set can be found in \cite[Chapter 6, Excercise A]{freyd}
and \cite[Lemma 1.1]{casacuberta2008brown}. 
\end{enumerate}
\end{rem}

\begin{problem}
Let $\mathcal{A}$ be an $\operatorname{Ext}$-small Ab3 abelian category.
Is the Ab4 condition equivalent to the existence of universal extensions? 
\end{problem}

In the next section we will show that the answer to this question
is affirmative. Then, the next question arises. 
\begin{problem}
Let $\mathcal{A}$ be an $\operatorname{Ext}$-small abelian category
satisfying Ab3 but not Ab4. Is it possible to characterize the objects
that admit universal extensions? 
\end{problem}

It is worth mentioning that throughout the article we will also be
interested in studying the dual notion of universal extension. We
will now present this notion for completeness. 
\begin{defn}
\label{def:extension co-universal} Let $\mathcal{A}$ be an abelian
category and $A,B\in\C$. A \textbf{universal co-extension} of $B$
by $A$ is a short exact sequence 
\[
\suc[B^{X}][E][A][p][u]
\]
in $\A$, for some non-empty set $X$, such that one of the following
equivalent statements hold true:
\begin{enumerate}
\item $\Ext[u][1][][B]:\Ext[A][1][][B]\rightarrow\Ext[E][1][][B]$ is the
zero morphism, 
\item $\Ext[p][1][][B]:\Ext[E][1][][B]\rightarrow\Ext[B^{X}][1][][B]$ is
injective, 
\item the connection morphism $\delta:\Hom[][B^{X}][B]\rightarrow\Ext[A][1][][B]$
is surjective. 
\end{enumerate}
\end{defn}

\section{Ab4 vs. Universal extensions\label{Ab4}}

This section contains the main results of the article. Specifically,
we will study the behavior of universal extensions in abelian Ab3
categories, and then characterize $\operatorname{Ext}$-small Ab4
abelian categories through the existence of universal extensions. 

\subsection{Universal extensions in Ab3 abelian categories}
\begin{lem}
\label{lem:phi es biyectiva}Let $\mathcal{A}$ be an Ab3 abelian
category and $A,B\in\mathcal{A}$. If there is a universal extension
of $B$ by $A$, then $\Psi:\Ext[B^{(X)}][1][][A]\rightarrow\Ext[B][1][][A]^{X}$
is bijective for every set $X$. In particular, $\Ext[B^{(X)}][1][][A]$
is a set for every set $X$. 
\end{lem}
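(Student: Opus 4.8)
The map $\Psi$ is already known to be injective for every family of objects (see \cite[Lemma 4.2]{argudin2022exactness}), so the plan is to prove only its surjectivity; bijectivity then follows at once. The key idea is to read off from the universal extension, via condition (c) of Definition \ref{def:extension universal}, a single extension out of which every extension of $B$ by $A$ is produced by pullback, and then to glue the resulting morphisms through the universal property of the coproduct $B^{(X)}$.

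Write the given universal extension as $\epsilon:\suc[A][E][B^{(Y)}][u][p]$, where I use $Y$ to avoid clashing with the set $X$ in the statement. Applying $\operatorname{Hom}_{\mathcal{A}}(B,-)$ to $\epsilon$ produces the connecting morphism $\delta:\Hom[][B][B^{(Y)}]\to\Ext[B][1][][A]$, which sends a morphism $\phi:B\to B^{(Y)}$ to the pullback class $\overline{\epsilon\cdot\phi}$; condition (c) asserts that $\delta$ is surjective. Now I would fix a set $X$ and a family $(\overline{\eta_i})_{i\in X}\in\Ext[B][1][][A]^{X}$. First, using surjectivity of $\delta$, I choose for each $i\in X$ a morphism $\phi_i:B\to B^{(Y)}$ with $\overline{\eta_i}=\overline{\epsilon\cdot\phi_i}$. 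Next, the universal property of $B^{(X)}=\bigoplus_{i\in X}B$ yields a unique morphism $\Phi:B^{(X)}\to B^{(Y)}$ with $\Phi\circ\mu_i^B=\phi_i$ for every $i$, where $\mu_i^B:B\to B^{(X)}$ is the $i$-th inclusion. Setting $\overline{\xi}:=\overline{\epsilon\cdot\Phi}\in\Ext[B^{(X)}][1][][A]$ (the pullback of $\epsilon$ along $\Phi$, which exists in any abelian category), the contravariant functoriality of the pullback gives, for each $i\in X$,
\[
\overline{\xi\cdot\mu_i^B}=\overline{(\epsilon\cdot\Phi)\cdot\mu_i^B}=\overline{\epsilon\cdot(\Phi\circ\mu_i^B)}=\overline{\epsilon\cdot\phi_i}=\overline{\eta_i}.
\]
Thus $\Psi(\overline{\xi})=(\overline{\eta_i})_{i\in X}$, which establishes surjectivity, and hence bijectivity.

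For the final assertion, recall that the existence of a universal extension of $B$ by $A$ already forces $\Ext[B][1][][A]$ to be a set: by condition (c) it is the image of the set $\Hom[][B][B^{(Y)}]$ under $\delta$ (this is Remark \ref{rem:extensiones universales}(d)). Therefore $\Ext[B][1][][A]^{X}$ is a set, and the bijection $\Psi$ transports this conclusion to $\Ext[B^{(X)}][1][][A]$. I expect no serious obstacle in this argument; the only points that require care are confirming that the connecting morphism $\delta$ is exactly the pullback assignment $\phi\mapsto\overline{\epsilon\cdot\phi}$ used in condition (c), and observing that $\overline{\xi}=\overline{\epsilon\cdot\Phi}$ is a legitimate element of the a priori proper class $\Ext[B^{(X)}][1][][A]$ before we know this class is a set. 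Both are harmless, and this pullback route is more direct than verifying the monomorphism criterion for $f_\eta$ supplied by Lemma \ref{lem:la inversa}.
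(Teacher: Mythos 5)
Your proposal is correct and follows essentially the same route as the paper's own proof: invoke injectivity of $\Psi$ from \cite[Lemma 4.2]{argudin2022exactness}, use condition (c) of Definition \ref{def:extension universal} to pick morphisms $B\to B^{(Y)}$ realizing each class $\overline{\eta_i}$, glue them into a single morphism $B^{(X)}\to B^{(Y)}$ via the coproduct's universal property, and pull back the universal extension along it; the set-theoretic conclusion likewise comes from Remark \ref{rem:extensiones universales}(d). The only difference is notational, and your explicit remarks on the functoriality of the pullback and on handling the a priori proper class are harmless elaborations of steps the paper leaves implicit.
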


\begin{proof}
Let $\eta:\:\suc[A][E][B^{(Y)}]$ be a universal extension of $B$
by $A$. By \cite[Lemma 4.2]{argudin2022exactness}, it is enough
to show that $\Psi$ is surjective. Let $(\overline{\eta_{i}})_{i\in X}\in\Ext[B][1][][A]^{X}$.
By Definition \ref{def:extension universal}(c), we know that for
every $i\in X$ there is a morphism $u_{i}:B\rightarrow B^{(Y)}$
such that $\overline{\eta\cdot u_{i}}=\overline{\eta_{i}}$. Now,
by the universal property of coproducts, gives a unique morphism $u:B^{(X)}\rightarrow B^{(Y)}$
such that $u\circ\mu_{i}^{B}=u_{i}$, for all $i\in X$. We claim
that $\overline{\eta'}:=\overline{\eta\cdot u}$ satisfies that $\Psi\left(\overline{\eta'}\right)=(\overline{\eta_{i}})_{i\in I}$.
Indeed, we have $\overline{\eta'\cdot\mu_{i}^{B}}=\overline{\eta\cdot u_{i}}=\overline{\eta_{i}}$.
Therefore, $\Psi$ is bijective as desired.

The final part follows from Remark \ref{rem:extensiones universales}(d). 
\end{proof}
The following result is a direct consequence of the previous lemma
together with the proof of the Lemma \ref{lem:la inversa}(a), for
the set mentioned below. 
\begin{cor}
\label{cor:caract ext univ} Let $\mathcal{A}$ be an Ab3 abelian
category and $A,B\in\mathcal{A}$ be objects. Then, there exists a
universal extension of $B$ by $A$ if, and only if, $X:=\Ext[B][1][][A]$
is a set and the natural map $\Psi:\Ext[B^{(X)}][1][][A]\rightarrow\Ext[B][1][][A]^{X}$
is bijective. 
\end{cor}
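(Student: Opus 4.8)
The plan is to prove this corollary as a straightforward packaging of Lemma~\ref{lem:phi es biyectiva} together with the structure of the pushout argument in Lemma~\ref{lem:la inversa}(a). The statement is an ``if and only if'', so I would handle the two directions separately, observing that Lemma~\ref{lem:phi es biyectiva} already delivers one of them essentially for free.

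For the forward direction ($\Rightarrow$), suppose a universal co\-/extension---no, a universal extension---of $B$ by $A$ exists. Then Remark~\ref{rem:extensiones universales}(d) (which is recalled inside the proof of Lemma~\ref{lem:phi es biyectiva}) guarantees that $\Ext[B][1][][A]$ is a set, so $X:=\Ext[B][1][][A]$ is a legitimate index set. Applying Lemma~\ref{lem:phi es biyectiva} with this particular $X$ then yields that $\Psi:\Ext[B^{(X)}][1][][A]\rightarrow\Ext[B][1][][A]^{X}$ is bijective. This direction is therefore immediate.

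For the converse ($\Leftarrow$), assume $X:=\Ext[B][1][][A]$ is a set and that $\Psi:\Ext[B^{(X)}][1][][A]\rightarrow\Ext[B][1][][A]^{X}$ is bijective. The goal is to produce an actual universal extension. The natural candidate is obtained by taking a complete set of representatives $\{\eta_{i}:\;\suc[A][E_{i}][B][f_{i}][g_{i}]\}_{i\in X}$ of $\Ext[B][1][][A]$ indexed by $X$ itself, forming the element $(\overline{\eta_{i}})_{i\in X}\in\Ext[B][1][][A]^{X}$, and letting $\overline{\eta'}:=\Psi^{-1}\bigl((\overline{\eta_{i}})_{i\in X}\bigr)$. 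Since $\Psi$ is bijective this preimage exists and is a genuine equivalence class of short exact sequences $\eta':\;\suc[A][E][B^{(X)}]$. I would then verify condition (c) of Definition~\ref{def:extension universal}: the connection morphism $\delta:\Hom[][B][B^{(X)}]\rightarrow\Ext[B][1][][A]$ is surjective. This is where the proof of Lemma~\ref{lem:la inversa}(a), invoked in the statement, does the work; the point is that for each representative $\overline{\eta_{i}}$ (and these exhaust $\Ext[B][1][][A]$ since the $\eta_i$ form a complete set of representatives), the inclusion $\mu_{i}^{B}:B\to B^{(X)}$ satisfies $\overline{\eta'\cdot\mu_{i}^{B}}=\overline{\eta'}\cdot\overline{\mu_{i}^{B}}=\Psi(\overline{\eta'})_i=\overline{\eta_{i}}$, so $\delta(\mu_{i}^{B})=\overline{\eta_{i}}$ and every class in $\Ext[B][1][][A]$ is hit.

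The main (and really the only) obstacle is bookkeeping rather than mathematics: I must make sure the index set over which the coproduct $B^{(X)}$ is taken is precisely $X=\Ext[B][1][][A]$, so that the tautological family $(\overline{\eta_i})_{i\in X}$ ranges over \emph{all} of $\Ext[B][1][][A]$ and thereby forces surjectivity of $\delta$. The subtle part to state cleanly is that $\Psi(\overline{\eta'})_i = \overline{\eta' \cdot \mu_i^B}$ by the very definition of $\Psi$ given in Subsection~\ref{subsec:ext,prod,coprod}, which is exactly the compatibility exploited in Lemma~\ref{lem:la inversa}(a). Once this identification is made, the surjectivity of $\delta$ is immediate and hence $\eta'$ is a universal extension, completing the converse. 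I do not anticipate any genuine difficulty; the corollary is a direct consequence of the two cited results as advertised.
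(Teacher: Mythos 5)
Your proposal is correct and follows essentially the same route as the paper: the forward direction is Lemma~\ref{lem:phi es biyectiva} plus Remark~\ref{rem:extensiones universales}(d), and the converse takes the $\Psi$-preimage of the tautological family of representatives indexed by $X=\Ext[B][1][][A]$ and checks condition (c) of Definition~\ref{def:extension universal} via $\delta(\mu_{i}^{B})=\overline{\eta'\cdot\mu_{i}^{B}}=\overline{\eta_{i}}$, which is exactly what the paper's appeal to the proof of Lemma~\ref{lem:la inversa}(a) accomplishes. The only blemish is notational: writing $\overline{\eta'}\cdot\overline{\mu_{i}^{B}}$ conflates a morphism with an equivalence class, but the intended identity $\Psi(\overline{\eta'})_{i}=\overline{\eta'\cdot\mu_{i}^{B}}$ is the correct one.
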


Before proceeding with the main theorem of this section, we prove
the following useful and interesting results. 
\begin{cor}
\label{cor:extension universal canonica}Let $\mathcal{A}$ be an
Ab3 abelian category, and $A,B\in\mathcal{A}$ such that $\Ext[A][1][][B]=\{\overline{\eta_{i}}\}_{i\in X}$
is a set. If there is a universal extension of $B$ by $A$, then
we can build $\eta:\:\suc[A][E][B^{(X)}]$ a universal extension of
$B$ by $A$ such that $\overline{\eta\cdot\mu_{i}^{B}}=\overline{\eta_{i}}$,
for all $i\in X$. Moreover, $\eta$ is the exact sequence described
in Lemma \ref{lem:colim}. 
\end{cor}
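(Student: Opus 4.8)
The plan is to produce $\eta$ as the unique $\Psi$-preimage of the family $(\overline{\eta_i})_{i\in X}$, and then to read off both assertions from equivalences already at our disposal. Since there is a universal extension of $B$ by $A$, the class $\Ext[B][1][][A]$ is a set by Remark \ref{rem:extensiones universales}(d); writing $X:=\Ext[B][1][][A]$, Lemma \ref{lem:phi es biyectiva} guarantees that the natural map
\[
\Psi:\Ext[B^{(X)}][1][][A]\longrightarrow\Ext[B][1][][A]^{X}
\]
is bijective. Accordingly I would set $\overline{\eta}:=\Psi^{-1}\bigl((\overline{\eta_i})_{i\in X}\bigr)$ and fix a representative $\eta:\suc[A][E][B^{(X)}]$.

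The required compatibility is then immediate from the definition of $\Psi$: since $\Psi(\overline{\eta})=\bigl(\overline{\eta\cdot\mu_i^{B}}\bigr)_{i\in X}$, comparing with $\Psi(\overline{\eta})=(\overline{\eta_i})_{i\in X}$ yields $\overline{\eta\cdot\mu_i^{B}}=\overline{\eta_i}$ for every $i\in X$.

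Next I would verify that $\eta$ is a universal extension by checking condition (c) of Definition \ref{def:extension universal}. The connection morphism $\delta:\Hom[][B][B^{(X)}]\to\Ext[B][1][][A]$ attached to $\eta$ sends $f\mapsto\overline{\eta\cdot f}$, so $\delta(\mu_i^{B})=\overline{\eta\cdot\mu_i^{B}}=\overline{\eta_i}$. As $\{\overline{\eta_i}\}_{i\in X}$ exhausts $\Ext[B][1][][A]$, the morphism $\delta$ is surjective, and hence $\eta$ is a universal extension of $B$ by $A$.

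It remains to identify $\eta$ with the sequence of Lemma \ref{lem:colim}. Applying that lemma to the family $\{\eta_i\}_{i\in X}$ (all of whose quotients equal $B$, so that $\bigoplus_{i\in X}B=B^{(X)}$), the existence of a short exact sequence (namely $\eta$ itself) whose image under $\Psi$ is $(\overline{\eta_i})_{i\in X}$ forces, by part (a), the morphism $f_\eta$ to be a monomorphism. Then part (b) applies and shows that $\eta$ is equivalent to the pushout sequence $\eta'':\suc[A][Z_{\eta}][B^{(X)}][f_\eta][g_\eta]$, so $\eta$ may be taken to be the sequence described in Lemma \ref{lem:colim}. I do not foresee a genuine obstacle here: the argument only transports $(\overline{\eta_i})_{i\in X}$ through $\Psi^{-1}$ and invokes the already-established equivalences. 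The single point requiring care is to apply the bijectivity of $\Psi$ and parts (a)--(b) of Lemma \ref{lem:colim} precisely for the indexing set $X=\Ext[B][1][][A]$.
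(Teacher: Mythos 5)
Your proposal is correct and follows essentially the same route as the paper: invoke Lemma \ref{lem:phi es biyectiva} to get bijectivity of $\Psi$, define $\overline{\eta}:=\Psi^{-1}\bigl((\overline{\eta_i})_{i\in X}\bigr)$, and read off $\overline{\eta\cdot\mu_i^{B}}=\overline{\eta_i}$ from the definition of $\Psi$. The paper's proof stops there, leaving implicit the two points you spell out explicitly --- surjectivity of the connecting morphism (Definition \ref{def:extension universal}(c)) and the identification with the pushout sequence via Lemma \ref{lem:colim}(a,b) --- so your write-up is simply a more complete rendering of the same argument.
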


\begin{proof}
By Lemma \ref{lem:phi es biyectiva}, $\Psi:\Ext[B^{(X)}][1][][A]\rightarrow\Ext[B][1][][A]^{X}$
is bijective. Therefore, there is $\overline{\eta}\in\Ext[B^{(X)}][1][][A]$
such that $\Psi(\overline{\eta})=(\overline{\eta_{i}})_{i\in X}$.
This means that $\overline{\eta\cdot\mu_{i}^{B}}=\overline{\eta_{i}}$,
for all $i\in X$. 
\end{proof}
\begin{defn}
Let $\mathcal{A}$ be an Ab3 abelian category, $A,B\in\mathcal{A}$
be objects that admit a universal extensions of $B$ by $A$, and
$\eta$ the universal extension built in Corollary \ref{cor:extension universal canonica}.
We will say that $\eta$ is the \textbf{canonical universal extension}
of $B$ by $A$. 
\end{defn}

\begin{thm}
\label{thm:canonica genera ext}Let $\mathcal{A}$ be an Ab3 abelian
category, $A,B\in\mathcal{A}$ be objects that admit a universal extension
of $B$ by $A$, and $\eta:\:\suc[A][E][B^{(X)}][f][g]$ the canonical
universal extension of $B$ by $A$. Then, $\Ext[B^{(X)}][1][][A]$
is a cyclic right $\End[\mathcal{A}][B^{(X)}]$-module generated by
$\overline{\eta}$. 
\end{thm}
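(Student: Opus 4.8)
The plan is to establish the equivalent statement that the map $\End[\mathcal{A}][B^{(X)}]\to\Ext[B^{(X)}][1][][A]$ sending $\phi\mapsto\overline{\eta\cdot\phi}$ is surjective. Indeed, the right $\End[\mathcal{A}][B^{(X)}]$-module structure on $\Ext[B^{(X)}][1][][A]$ is given by pullback, $\overline{\epsilon}\cdot\phi:=\overline{\epsilon\cdot\phi}$, and the associativity of the Yoneda (pullback) product makes this a genuine right action; saying that the module is cyclic generated by $\overline{\eta}$ is then precisely saying that $\Ext[B^{(X)}][1][][A]=\{\overline{\eta\cdot\phi}\,:\,\phi\in\End[\mathcal{A}][B^{(X)}]\}$. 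So I fix an arbitrary $\overline{\epsilon}\in\Ext[B^{(X)}][1][][A]$ and aim to produce $\phi\in\End[\mathcal{A}][B^{(X)}]$ with $\overline{\epsilon}=\overline{\eta\cdot\phi}$.

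The key reduction is that, since $\eta$ is a universal extension, Lemma \ref{lem:phi es biyectiva} guarantees that $\Psi:\Ext[B^{(X)}][1][][A]\to\Ext[B][1][][A]^{X}$ is bijective; in particular it is injective, so it suffices to find $\phi$ with $\Psi(\overline{\eta\cdot\phi})=\Psi(\overline{\epsilon})$. To build $\phi$, I would apply $\Hom(B,-)$ to $\eta$ and invoke condition (c) of Definition \ref{def:extension universal}: the connecting morphism $\delta:\Hom[][B][B^{(X)}]\to\Ext[B][1][][A]$, which is given by $\delta(h)=\overline{\eta\cdot h}$, is surjective. Writing $\Psi(\overline{\epsilon})=(\overline{\epsilon\cdot\mu_{i}^{B}})_{i\in X}$, surjectivity of $\delta$ yields for each $i\in X$ a morphism $h_{i}:B\to B^{(X)}$ with $\overline{\eta\cdot h_{i}}=\overline{\epsilon\cdot\mu_{i}^{B}}$. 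By the universal property of the coproduct $B^{(X)}=\bigoplus_{i\in X}B$ there is then a unique $\phi\in\End[\mathcal{A}][B^{(X)}]$ satisfying $\phi\circ\mu_{i}^{B}=h_{i}$ for all $i\in X$.

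Finally I would verify the equality componentwise: using functoriality of the pullback, $(\eta\cdot\phi)\cdot\mu_{i}^{B}=\eta\cdot(\phi\circ\mu_{i}^{B})=\eta\cdot h_{i}$, whence $\Psi(\overline{\eta\cdot\phi})=(\overline{\eta\cdot h_{i}})_{i\in X}=(\overline{\epsilon\cdot\mu_{i}^{B}})_{i\in X}=\Psi(\overline{\epsilon})$. Injectivity of $\Psi$ then forces $\overline{\epsilon}=\overline{\eta\cdot\phi}=\overline{\eta}\cdot\phi$, which is the desired conclusion. This argument runs closely parallel to the construction in the proof of Lemma \ref{lem:phi es biyectiva}, the only difference being that the lifts $h_{i}$ are assembled into an endomorphism of $B^{(X)}$ rather than a map into another coproduct. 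The proof is largely a direct assembly of prior results, so I do not anticipate a serious obstacle; the one point demanding care is confirming that the pullback action is a well-defined right module structure and that the identity $(\eta\cdot\phi)\cdot\mu_{i}^{B}=\eta\cdot(\phi\circ\mu_{i}^{B})$ holds, i.e. the associativity and contravariant functoriality of the Yoneda product in the first variable.
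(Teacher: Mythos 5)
Your proof is correct, and it reaches the conclusion by a genuinely different (and leaner) mechanism than the paper's. Both arguments share the same skeleton: restrict a given class along the inclusions $\mu_i^B$, realize each component as a pullback of $\eta$ along some morphism $B\to B^{(X)}$, and assemble those morphisms into $\phi\in\operatorname{End}_{\mathcal{A}}(B^{(X)})$ via the universal property of the coproduct. The divergence is in how the equality $\overline{\epsilon}=\overline{\eta\cdot\phi}$ is then established. The paper does it constructively: it fixes a complete set of representatives $\{\eta_i\}_{i\in X}$ of $\operatorname{Ext}^1_{\mathcal{A}}(B,A)$, encodes the given class by a function $\sigma\colon X\to X$, realizes it as a pushout of $\bigoplus_{i\in X}\eta_{\sigma(i)}$ along the codiagonal (as in Lemma \ref{lem:colim}), and uses the universal properties of coproducts, pushouts and cokernels to manufacture an honest morphism of short exact sequences $(1_A,\gamma',\gamma'')$ from the given extension to $\eta$, which forces the desired pullback relation. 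You never touch the middle terms at all: you lift each component through the connecting map $\delta$ (condition (c) of Definition \ref{def:extension universal}) and conclude formally from the injectivity of $\Psi$ (Lemma \ref{lem:phi es biyectiva}, ultimately \cite[Lemma 4.2]{argudin2022exactness}); as you note, this is the surjectivity half of the proof of Lemma \ref{lem:phi es biyectiva} rerun with target $B^{(X)}$ itself. Your route buys brevity, avoids the pushout diagram chase, and in fact proves slightly more: it never uses the canonical property $\overline{\eta\cdot\mu_i^B}=\overline{\eta_i}$, only that $\eta$ is \emph{some} universal extension with quotient $B^{(X)}$, so any such extension generates. The paper's route buys explicitness: one can check that its endomorphism $\gamma''$ satisfies $\gamma''\circ\mu_i^B=\mu_{\sigma(i)}^B$, i.e.\ it is induced by a self-map of the index set, which is more concrete than an endomorphism assembled from unspecified lifts $h_i$. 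The two points you flag for care (that pullback gives a right $\operatorname{End}_{\mathcal{A}}(B^{(X)})$-action, and that $(\eta\cdot\phi)\cdot\mu_i^B=\eta\cdot(\phi\circ\mu_i^B)$) are the standard contravariant functoriality of $\operatorname{Ext}^1_{\mathcal{A}}(-,A)$, valid in any abelian category, so there is no gap there.
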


\begin{proof}
Let $\{\eta_{i}:\:\suc[A][E_{i}][B][f_{i}][g_{i}]\}_{i\in X}$ be
a complete set of representatives of $\Ext[B][1][][A]$ (see Remark
\ref{rem:extensiones universales}(d)). Given an extension $\overline{\eta'}\in\Ext[B^{(X)}][1][][A]$
with $\eta':\;\suc[A][E'][B^{(X)}][f'][g']$, we can define a function
$\sigma:X\rightarrow X$ such that $\overline{\eta'\cdot\mu_{i}^{B}}=\overline{\eta_{\sigma(i)}}$,
for all $i\in X$. From the aforementioned, we have the following
commutative diagram with exact rows, for every $i\in X$: \\
\noindent\begin{minipage}[t]{1\columnwidth}%
\[ \begin{tikzpicture}[-,>=to,shorten >=1pt,auto,node distance=1.5cm,main node/.style=,x=1.5cm,y=1.5cm]
   \node[main node] (E) at (.5,0)      {$\eta:$};    
   \node[main node] (E') [below of=E]      {$\eta _{\sigma (i)}:$};  
     \node[main node] (E'') [below of=E']   {$\eta ':$};

   \node[main node] (1) at (0,0)      {$$};     

 \node[main node] (2) [right of=1]  {$A$};   
    \node[main node] (3) [right of=2]  {$E$}; 
      \node[main node] (4) [right of=3]  {$B ^{(X)}$};    
   \node[main node] (5) [right of=4]  {$$};
   \node[main node] (1') [below of=1]      {$$};    
   \node[main node] (2') [right of=1']  {$A$};    
   \node[main node] (3') [right of=2']  {$E_{\sigma (i)}$};  
     \node[main node] (4') [right of=3']  {$B$};    
   \node[main node] (5') [right of=4']  {$$};

   \node[main node] (1'') [below of=1']      {$$};      
 \node[main node] (2'') [right of=1'']  {$A$};     
  \node[main node] (3'') [right of=2'']  {$E'$};      
 \node[main node] (4'') [right of=3'']  {$B^{(X)}$};      
 \node[main node] (5'') [right of=4'']  {$$};
\draw[right hook ->, thin]   (2)  to [above] node  {$f$}  (3);
\draw[->>, thin]   (3)  to [above] node  {$g$}  (4);
\draw[right hook ->, thin]   (2')  to [below] node  {$f_{\sigma (i)}$}  (3');
\draw[->>, thin]   (3')  to [below] node  {$g_{\sigma (i)}$}  (4');
\draw[right hook->, thin]   (2'')  to [below] node  {$f'$}  (3'');
\draw[->>, thin]   (3'')  to [below] node  {$g'$}  (4'');
\draw[-, double]   (2')  to node  {$$}  (2);
\draw[->, thin]   (3')  to node  {$\mu' _{\sigma (i)}$}  (3);
\draw[->, thin]   (4')  to node  {$\mu ^B _{\sigma (i)}$}  (4);
\draw[-, double]   (2')  to node  {$$}  (2'');
\draw[->, thin]   (3')  to node  {$u_i$}  (3'');
\draw[->, thin]  (4')  to node  {$\mu ^B _i$}  (4'');
    \end{tikzpicture} 
\]
\end{minipage}\\
 Now, by a similar argument to the proof of Lemma \ref{lem:colim}(c),
we have the following pushout diagram. \\
\noindent\begin{minipage}[t]{1\columnwidth}%
\[ \begin{tikzpicture}[-,>=to,shorten >=1pt,auto,node distance=2.5cm,main node/.style=,x=1.2cm,y=1.2cm]

   \node[main node] (0) at (-2,0)      {$\bigoplus _{i\in X} \eta _{\sigma (i)}:$};  
     \node[main node] (1) at (0,0)      {$A^{(X)}$};     
  \node[main node] (2) [right of=1]  {$\bigoplus _{i\in X} E _{\sigma (i)}$};    
   \node[main node] (3) [right of=2]  {$B^{(X)}$};   
    \node[main node] (0') at (-2,-1)   {$\eta':$};    
   \node[main node] (1') at (0,-1)   {$A$}; 
      \node[main node] (2') [right of=1']  {$E'$};    
   \node[main node] (3') [right of=2']  {$B^{(X)}$};

\draw[->, thin]   (1)  to node  {$\bigoplus _{i\in X} f _{\sigma (i)}$}  (2);
\draw[->>, thin]   (2)  to node  {$\bigoplus _{i\in X} g _{\sigma (i)}$}  (3);
\draw[right hook->, thin]   (1')  to node  {$f'$}  (2');
\draw[->>, thin]   (2')  to node  {$g'$}  (3');
\draw[->, thin]   (1)  to node  {$\nabla_X^{A}$}  (1');
\draw[->, thin]   (2)  to node  {$\nabla'$}  (2');
\draw[-, double]   (3)  to node  {$$}  (3');
    \end{tikzpicture} \]
\end{minipage} \\
Then, by the universal property of coproducts, there is $\gamma\in\Hom[][\bigoplus_{i\in X}E_{\sigma(i)}][E]$
such that $\gamma\circ\mu_{i}^{E_{\sigma}}=\mu'_{\sigma(i)}$, where
$\mu_{i}^{E_{\sigma}}:E_{\sigma(i)}\rightarrow\bigoplus_{i\in X}E_{\sigma(i)}$
is the $i$-th canonical inclusion. From the universal property of
coproducts and of the following chain of compositions, we deduce that
$f\circ\nabla_{X}^{A}=\gamma\circ(\bigoplus_{i\in X}f_{\sigma(i)})$
\[
f\circ\nabla_{X}^{A}\circ\mu_{i}^{A}=f=\mu'_{\sigma(i)}\circ f_{\sigma(i)}=\gamma\circ\mu_{i}^{E_{\sigma}}\circ f_{\sigma(i)}=\gamma\circ(\bigoplus_{i\in X}f_{\sigma(i)})\circ\mu_{i}^{A}\:\mbox{,}\forall i\in X\text{.}
\]
 Hence, it follows from the universal property of pushouts that there
is a morphism $\gamma':E'\rightarrow E$ such that $\gamma'\circ f'=f$.
And hence, by the universal property of cokernels, there is a morphism
$\gamma'':B^{(X)}\rightarrow B^{(X)}$ such that $\overline{\eta'}=\overline{\eta\cdot\gamma''}$. 
\end{proof}

\subsection{A characterization of Ab4 categories\label{sub.Ab4} }
\begin{thm}
\label{thm:Ab4 y Ext-peque sii ext universales} Let $\mathcal{A}$
be an $\operatorname{Ext}$-small abelian category which is Ab3. Then,
$\mathcal{A}$ is Ab4 if, and only if, there is a universal extension
of $B$ by $A$, for all $A,B\in\mathcal{A}$.
\end{thm}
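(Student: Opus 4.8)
The plan is to prove the two implications separately. The forward direction is essentially a bookkeeping consequence of results already recorded, while the converse requires transferring the ``constant family'' bijectivity supplied by the hypothesis to arbitrary families.

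For ``$\mathcal{A}$ Ab4 $\Rightarrow$ universal extensions exist'' there is nothing substantial to do: since $\mathcal{A}$ is $\operatorname{Ext}$-small, $\Ext[A][1][][B]$ is a set for all $A,B\in\mathcal{A}$, so Remark \ref{rem:extensiones universales}(c) directly produces a universal extension of $B$ by $A$.

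For the converse, I would argue through the criterion of Theorem \ref{thm:Ab4 vs ext}: it suffices to show that $\Psi:\Ext[\bigoplus_{i\in I}A_{i}][1][][B]\to\prod_{i\in I}\Ext[A_{i}][1][][B]$ is bijective for every family $\{A_{i}\}_{i\in I}$ and every $B\in\mathcal{A}$. As $\Psi$ is always injective by \cite[Lemma 4.2]{argudin2022exactness}, the task reduces to proving surjectivity. The difficulty is that the hypothesis only feeds, through Lemma \ref{lem:phi es biyectiva}, bijectivity of $\Psi$ for \emph{constant} families: writing $A:=\bigoplus_{i\in I}A_{i}$, the universal extension of $A$ by $B$ yields that $\Psi_{A^{(I)}}:\Ext[A^{(I)}][1][][B]\to\Ext[A][1][][B]^{I}$ is bijective (apply the lemma with index set $I$). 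The main obstacle is therefore to pass from this constant-family statement to the genuinely heterogeneous family $\{A_{i}\}_{i\in I}$.

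The key device is to realize $A=\bigoplus_{i\in I}A_{i}$ as a \emph{twisted} split subobject of $A^{(I)}$. Writing $\mu_{i}:A_{i}\to A$ for the coproduct inclusions, $\rho_{i}:A\to A^{(I)}$ for the copies, and $\nabla:=\nabla_{I}^{A}:A^{(I)}\to A$ for the co-diagonal, I would define $\phi:A\to A^{(I)}$ by $\phi\circ\mu_{i}=\rho_{i}\circ\mu_{i}$ for all $i\in I$. A short computation with the universal property of the coproduct gives $\nabla\circ\phi=1_{A}$, so the induced pullback map $\Ext[\phi][1][][B]$ is a split epimorphism with section $\Ext[\nabla][1][][B]$, in particular surjective. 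The next step is to verify the commuting square
\[
\Bigl(\prod_{i\in I}\Ext[\mu_{i}][1][][B]\Bigr)\circ\Psi_{A^{(I)}}=\Psi_{\{A_{i}\}}\circ\Ext[\phi][1][][B],
\]
which follows by unwinding both composites on a class $\overline{\epsilon}\in\Ext[A^{(I)}][1][][B]$: each reduces to $(\overline{\epsilon\cdot\rho_{i}\cdot\mu_{i}})_{i\in I}$, using the defining relation $\phi\circ\mu_{i}=\rho_{i}\circ\mu_{i}$ together with associativity of the pullback assignment.

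Finally, each $\mu_{i}:A_{i}\to A$ is a split monomorphism, its retraction being the canonical projection $A\to A_{i}$, so every $\Ext[\mu_{i}][1][][B]$ is a split epimorphism and hence the product $\prod_{i\in I}\Ext[\mu_{i}][1][][B]$ is surjective. Since $\Psi_{A^{(I)}}$ is bijective, the left-hand side of the square is a composite of surjections, hence surjective; by commutativity $\Psi_{\{A_{i}\}}\circ\Ext[\phi][1][][B]$ is surjective, whence $\Psi_{\{A_{i}\}}$ is surjective. Together with its automatic injectivity this makes $\Psi_{\{A_{i}\}}$ bijective, and as $\{A_{i}\}$ and $B$ were arbitrary, Theorem \ref{thm:Ab4 vs ext} gives that $\mathcal{A}$ is Ab4. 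I expect the only delicate points to be the verification of the commuting square (pure pullback bookkeeping) and the observation that a product of surjective maps of abelian groups is again surjective, which is what legitimizes passing from the componentwise surjectivity of the $\Ext[\mu_{i}][1][][B]$ to surjectivity of their product.
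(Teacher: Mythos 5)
Your proof is correct; the forward direction is the same one-line citation the paper uses (Remark \ref{rem:extensiones universales}(c)), but your converse takes a genuinely different route worth comparing. Both arguments reduce, via Theorem \ref{thm:Ab4 vs ext} and the automatic injectivity of $\Psi$, to surjectivity of $\Psi$ on an arbitrary family, and both ultimately rest on a universal extension of the coproduct of the family by the fixed object; indeed, the preimages $\overline{\eta_{i}\cdot\pi_{i}}$ you take under the split epimorphisms $\operatorname{Ext}^{1}(\mu_{i},B)$ coincide, up to the swap of letters, with the padded classes $\overline{\eta_{i}\oplus\kappa_{i}}$ that the paper feeds into Definition \ref{def:extension universal}(c). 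The assembly, however, differs. The paper stays at the level of explicit diagrams: using Definition \ref{def:extension universal}(c) it builds (in its notation) a morphism $\gamma'$ from the pushout object $Z_{H}$ of Lemma \ref{lem:la inversa} to the middle term of the universal extension, concludes that $f_{H}$ is a monomorphism because $\gamma'\circ f_{H}$ equals the monomorphism $a$, and then invokes Lemma \ref{lem:la inversa}(a). You instead quote Lemma \ref{lem:phi es biyectiva} --- applicable because the hypothesis supplies a universal extension of $A=\bigoplus_{i\in I}A_{i}$ by $B$ --- to obtain bijectivity of $\Psi$ for the \emph{constant} family $A^{(I)}$, and transfer it to the heterogeneous family $\{A_{i}\}_{i\in I}$ by a purely formal retraction argument: the twisted diagonal $\phi$ with $\nabla_{I}^{A}\circ\phi=1_{A}$, the commuting square (which holds by functoriality of the pullback action, exactly as you computed), and surjectivity of a product of split epimorphisms (your two ``delicate points'' are indeed the only verifications, and both go through; the product step needs only the axiom of choice). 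Your packaging eliminates the pushout chase entirely and isolates a reusable fact --- in an Ab3 abelian category, bijectivity of $\Psi$ on constant families implies it on all families --- whereas the paper's version is self-contained relative to Lemma \ref{lem:la inversa} and exhibits the preimage concretely as the pushout sequence constructed there.
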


\begin{proof}
By \cite[Proposition  5.9]{parra2021tilting}, we know that if $\mathcal{A}$
is Ab4 and $\operatorname{Ext}$-small, then universal extensions
exist for any pair of objects.

To prove the opposite implication, we consider an object $A\in\mathcal{A}$,
a set $\{B_{i}\}_{i\in X}$ of objects in $\mathcal{A}$ and its coproduct
$B:=\bigoplus_{i\in X}B_{i}$. We will seek to show that, for every
$(\overline{\eta_{i}})\in\prod_{i\in X}\Ext[B_{i}][1][][A]$, the
morphism $f_{H}$ in the exact sequence $A\stackrel{f_{H}}{\rightarrow}Z_{H}\stackrel{g_{H}}{\twoheadrightarrow}\bigoplus_{i\in X}B_{i}$
is a monomorphism, where $H=\{\eta_{i}\}_{i\in X}$ (see Lemma \ref{lem:la inversa}).
For this, we will use a universal extension of $B$ by $A$, say $\eta:\:\suc[A][D][B^{(Y)}][a][b]$.
Consider the exact sequence $\eta'_{i}:=\eta_{i}\oplus\kappa_{i}$,
for all $i\in X$, where $\kappa_{i}:\;\suc[0][\bigoplus_{j\in X-\{i\}}B_{i}][\bigoplus_{j\in X-\{i\}}B_{i}][][1]\mbox{.}$
By Definition \ref{def:extension universal}(c) there is a morphism
$u_{i}:B\rightarrow B^{(Y)}$ such that $\overline{\eta\cdot u_{i}}=\overline{\eta'_{i}}$
for every $i\in X$. This give us the following commutative diagram\\
\noindent\begin{minipage}[t]{1\columnwidth}%
\[ \begin{tikzpicture}[-,>=to,shorten >=1pt,auto,node distance=1.5cm,main node/.style=,x=1.5cm,y=1.5cm]
   
\node[main node] (0) at (0.5,0)      {$\eta _i:$};    
   \node[main node] (1) at (0,0)      {$$};     
  \node[main node] (2) [right of=1]  {$A$};   
    \node[main node] (3) [right of=2]  {$E_i$};   
    \node[main node] (4) [right of=3]  {$B_i$};   
    \node[main node] (5) [right of=4]  {$$}; 
          \node[main node] (0')  [below of=0]   {$\eta ' _i:$};   
    \node[main node] (1') [below of=1]      {$$};    
   \node[main node] (2') [right of=1']  {$A$};     
  \node[main node] (3') [right of=2']  {$E_i \oplus B' _i$};  
     \node[main node] (4') [right of=3']  {$B$};    
   \node[main node] (5') [right of=4']  {$$};    

   \node[main node] (0'')  [below of=0']   {$\eta :$};     
  \node[main node] (1'') [below of=1']      {$$};   
    \node[main node] (2'') [right of=1'']  {$A$};       
\node[main node] (3'') [right of=2'']  {$D$};       
\node[main node] (4'') [right of=3'']  {$B ^{(Y)}$};      
 \node[main node] (5'') [right of=4'']  {$$};

\draw[right hook ->, thin]   (2)  to node  {$f_i$}  (3);
\draw[->>, thin]   (3)  to node  {$g_i$}  (4);
\draw[right hook->, thin]   (2')  to node  {$\left[\begin{smallmatrix}f_i\\0\end{smallmatrix}\right]$}  (3');
\draw[->>, thin]   (3')  to node  {$\left[\begin{smallmatrix}g_{i} & 0\\0 & 1\end{smallmatrix}\right]$}  (4');
\draw[->, thin]   (2)  to node  {$1$}  (2');
\draw[->, thin]   (3)  to node  {$\left[\begin{smallmatrix}1\\0\end{smallmatrix}\right]$}  (3');
\draw[->, thin]   (4)  to node  {$\left[\begin{smallmatrix}1\\0\end{smallmatrix}\right]$}  (4');
\draw[right hook ->, thin]   (2'')  to node  {$a$}  (3'');
\draw[->>, thin]   (3'')  to node  {$b$}  (4'');
\draw[->, thin]   (2')  to node  {$1$}  (2'');
\draw[->, thin]   (3')  to node  {$u'_i$}  (3'');
\draw[->, thin]   (4')  to node  {$u_i$}  (4'');
    \end{tikzpicture} \]
\end{minipage} \\
 where $B'_{i}:=\bigoplus_{j\in X-\{i\}}B_{i}$. Now, by the universal
property of coproducts, there is $\gamma\in\Hom[][\bigoplus_{i\in X}E_{i}][D]$
such that $\gamma\circ\mu_{i}^{E}=u'_{i}\circ\left[\begin{smallmatrix}1\\
0
\end{smallmatrix}\right]$, for all $i\in X$. Moreover, since 
\[
\gamma\circ(\bigoplus_{i\in X}f_{i})\circ\mu_{i}^{A}=\gamma\circ\mu_{i}^{E}\circ f_{i}=u'_{i}\circ\left[\begin{smallmatrix}1\\
0
\end{smallmatrix}\right]\circ f_{i}=a=a\circ\nabla_{X}^{A}\circ\mu_{i}^{A},\:\forall i\in X
\]
we have that $\gamma\circ(\bigoplus_{i\in X}f_{i})=a\circ\nabla_{X}^{A}$.
And hence, by the universal property of pushouts between $\nabla_{X}^{A}$
and $\bigoplus_{i\in X}f_{i}$, there is a morphism $\gamma':Z_{H}\rightarrow D$
such that $\gamma'\circ f_{H}=a$. Therefore, since $a$ is a monomorphism,
$f_{H}$ is a monomorphism. This proves that $\mathcal{A}$ is Ab4
by Lemma \ref{lem:la inversa} and Theorem \ref{thm:Ab4 vs ext}. 
\end{proof}

\section{Injective and projective effacements\label{sec:eff}}

Injective (resp. projective) effacements are a notion that arose from
abstracting injective (resp. projective) objects \cite[Section 1.10]{Ab}.
Among their  applications we can mention that they are used to characterize
Ab4 categories in a similar way to what we have done in the previous
section (see \cite[Theorem A]{barr1975existence}, \cite[Section 1.10, Remark 1]{Ab},
\cite[Section 5, Example A]{leroux1971structures}, \cite[Corollary 1.4]{roos2006derived},
\cite[Theorem 1]{roos1966produits}). Which leads us to ask whether
there is a relationship between universal extensions and injective
effacements. The aim of this section will be to explore this question. 

\newcommandx\Coker[1][usedefault, addprefix=\global, 1=M]{\operatorname{Coker}\left(#1\right)}%

\begin{defn}
\cite[Definition 3.1]{rogalski2019well} Let $\mathcal{A}$ be an
abelian category, $A$ be an object of $\mathcal{A}$, and $\mathcal{S}$
be a class of objects in $\mathcal{A}$. An \textbf{$\mathcal{S}$-injective
effacement} of $A$ is a monomorphism $\iota:A\rightarrow\overline{A}$
such that for any monomorphism $f:A\rightarrow B$ with $\Coker[f]\in\mathcal{S}$,
there is a morphism $j:B\rightarrow\overline{A}$ such that $j\circ f=\iota$.
An \textbf{$\mathcal{S}$-projective effacement} is an epimorphism
satisfying the dual condition. Moreover, we say that an $\A$-injective
(resp. $\A$-projective) effacement is an \textbf{injective} (resp.
\textbf{projective}) \textbf{effacement}. 
\end{defn}

\begin{rem}
\label{rem: equivalencias injective effacement}Observe that a monomorphism
$\iota:A\rightarrow\overline{A}$ is an \textbf{$\mathcal{S}$-injective
effacement} if, and only if, $\Ext[N][1][\mathcal{A}][\iota]:\Ext[N][1][\mathcal{A}][A]\rightarrow\Ext[N][1][\mathcal{A}][\overline{A}]$
is the zero morphism $\forall N\in\mathcal{S}$(see \cite[Corollary 1.4]{roos2006derived}).
And hence, the following statements are equivalent:
\begin{enumerate}
\item we have an exact sequence $\suc[A][\overline{A}][C][\iota][\pi]$,
where $\iota$ is an $\mathcal{S}$-injective effacement, 
\item $\Ext[B][1][][\iota]:\Ext[B][1][][A]\rightarrow\Ext[B][1][][\overline{A}]$
is the zero morphism $\forall B\in\mathcal{S}$, 
\item $\Ext[B][1][][\pi]:\Ext[B][1][][\overline{A}]\rightarrow\Ext[B][1][][C]$
is injective $\forall B\in\mathcal{S}$, 
\item the connection morphism $\delta:\Hom[][B][C]\rightarrow\Ext[B][1][][A]$
is surjective $\forall B\in\mathcal{S}$. 
\end{enumerate}
In particular, an exact sequence $\suc[A][\overline{A}][B^{(X)}][\iota]$
is a universal extension if, and only if, $\iota$ is an $\{B\}$-injective
effacement. Therefore, we have that universal extensions are a special
(and easy-to-use) kind of injective effacements. 
\end{rem}

In the rest of the subsection we study projective effacements in Grothendieck
categories which are Ab4{*}. Concretely, we give a new proof of \cite[Corollary 1.4]{roos2006derived})
using universal co-extensions.
\begin{prop}
\label{prop:lambda-p2 vs proj eff} If $\G$ be a Grothendieck category
which is Ab4{*}, then every object of $\G$ has a projective effacement. 
\end{prop}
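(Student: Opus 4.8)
The plan is to dualize the argument that universal extensions give rise to injective effacements, as summarized in Remark \ref{rem: equivalencias injective effacement}. Recall that an epimorphism $\pi:\overline{A}\twoheadrightarrow A$ is an (\textbf{$\A$-})projective effacement precisely when $\Ext[\pi][1][\G][B]=0$ for all $B\in\G$; equivalently, by the dual of Remark \ref{rem: equivalencias injective effacement}, when for every $B\in\G$ the connecting morphism $\delta:\Hom[\G][B][\ker\pi]\to\Ext[A][1][\G][B]$ is surjective. A universal co-extension $\suc[B^{X}][E][A][p][u]$ of $B$ by $A$ (Definition \ref{def:extension co-universal}) witnesses exactly this surjectivity for the single object $B$. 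So the strategy is: first produce, for each fixed $A$ and each $B$, a universal co-extension of $B$ by $A$; then assemble these over \emph{all} $B\in\G$ into a single epimorphism $\overline{A}\twoheadrightarrow A$ that effaces $\Ext[A][1][\G][-]$ uniformly.

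First I would establish the existence of universal co-extensions in the Ab4{*} setting. This is the exact dual of Remark \ref{rem:extensiones universales}(c): since $\G$ is a Grothendieck category it is Ext-small, so $X:=\Ext[A][1][\G][B]$ is a set; since $\G$ is Ab4{*}, Theorem \ref{thm:Ab4 vs ext} gives that $\Phi:\Ext[B][1][\G][\prod_{i\in I}A_i]\to\prod_{i\in I}\Ext[B][1][\G][A_i]$ is bijective, and the dual of Lemma \ref{lem:la inversa} lets us realize the element $\Phi^{-1}\big((\overline{\eta_i})_{i\in X}\big)$ — where $\{\eta_i\}_{i\in X}$ is a complete set of representatives of $\Ext[A][1][\G][B]$ — by a short exact sequence $\suc[B^{X}][E][A][p][u]$ via pullback along the $X$-diagonal $\Delta_{X}^{B}$. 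By the dual of Definition \ref{def:extension universal}(c) this sequence is a universal co-extension of $B$ by $A$, so the connecting map $\Hom[\G][B^{X}][B]\to\Ext[A][1][\G][B]$ is onto. Crucially, the cardinal $X$ is bounded once $A,B$ are fixed, which is what will let the next step be carried out inside $\G$.

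Next I would globalize over $B$. The idea is to run the universal co-extension construction simultaneously for a \emph{set} $\mathcal{S}$ of objects that suffices to detect all of $\Ext[A][1][\G][-]$; the natural candidate for $\mathcal{S}$ is a generating set of $\lambda_{\G}$-presentable objects as in Remark \ref{rem:ordinal-cardinal-G}. Concretely, I expect to form the product (over $B\in\mathcal{S}$ and over a set of representatives of $\Ext[A][1][\G][B]$) of the corresponding short exact sequences, yielding an epimorphism $\overline{A}\twoheadrightarrow A$ whose kernel is the relevant product of powers $B^{X_B}$. Because $\G$ is Ab4{*}, products are exact, so this remains a short exact sequence and the connecting maps assemble correctly; one then checks that $\Ext[\pi][1][\G][B]=0$ for every $B\in\mathcal{S}$, and upgrades this to all $B\in\G$ using that every object is a $\mu$-directed colimit of objects of $\mathcal{S}$ together with the $\mu$-FP2 behavior supplied by Proposition \ref{prop:pres implica 2 pres} (so that $\Ext[-][1][\G][B]$ commutes with the relevant colimits and vanishing on $\mathcal{S}$ propagates).

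The main obstacle, I expect, is precisely this passage from a \emph{single} $B$ (or a set $\mathcal{S}$ of generators) to \emph{all} $B\in\G$: a universal co-extension only effaces $\Ext[A][1][\G][-]$ against one fixed object, whereas a projective effacement must efface it against every object of $\G$. Overcoming this is where the cardinality control from Remark \ref{rem:ordinal-cardinal-G}, the injectivity-under-$\lambda$-directed-colimits phenomenon from Lemma \ref{lem: colimite lambda directo de inyectivos}, and the $\mu$-FP2 property from Proposition \ref{prop:pres implica 2 pres} all come into play: they guarantee that effacing against a bounded generating set $\mathcal{S}$ of $\mu$-presentable objects is enough, because $\Ext[X][1][\G][-]$ preserves the $\mu$-directed colimits through which every object is built. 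The Ab4{*} hypothesis is used twice — once to produce each universal co-extension (via the bijectivity of $\Phi$) and once to keep the assembled product exact — and I would be careful that the index sets remain genuine sets so that the construction stays within $\G$.
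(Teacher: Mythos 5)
Your proposal is correct and follows essentially the same route as the paper's proof: existence of universal co-extensions from the Ab4{*} condition, assembly of these over a set $\mathcal{S}$ of presentable objects into a single extension of $A$ by $\prod_{S\in\mathcal{S}}S^{X_S}$ via the bijectivity of $\Phi$, and propagation of the effacement from $\mathcal{S}$ to all of $\G$ using $\lambda$-directed colimits and the $\lambda$-FP2 property of $A$ supplied by Proposition \ref{prop:pres implica 2 pres}. The only slip is notational: the functor that must preserve the colimits is $\mathrm{Ext}^{1}_{\G}(A,-)$, not $\mathrm{Ext}^{1}_{\G}(-,B)$, but this does not affect your argument.
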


\begin{proof}
Notice first that in $\G$, there exists the universal co-extensions
for any couple of objects in $\G$ (see Theorem \ref{thm:Ab4 y Ext-peque sii ext universales}).
On the other hand, let $A$ be an object in $\G$. From \cite[Lemma 2.5.13, Proposition 2.5.16]{krause2021homological},
we know that there is $\kappa$ regular cardinal such that $A$ is
$\kappa$-presentable. From Remark \ref{rem:ordinal-cardinal-G} and
Proposition \ref{prop:pres implica 2 pres}, we know that there is
a regular cardinal $\lambda\geq\kappa$ such that $A$ is $\lambda$-FP2
and there is a set $\mathcal{S}$ of objects in $\G$ such that every
object in $\G$ is a $\lambda$-directed colimit of objects in $\mathcal{S}$.
Now, for each $S$ in $\mathcal{S}$, we take a universal co-extension
of $S$ by $A$, say $\overline{\epsilon_{S}}\in\Ext[A][1][\mathcal{A}][V_{S}]$,
where $V_{S}=S^{X_{S}}$ for some non-empty set $X_{S}$. Using the
Ab4{*} condition on $\G$ together with the Theorem \ref{thm:Ab4 vs ext},
we get that the canonical map 
$\Ext[A][1][\mathcal{A}][\prod_{S\in\mathcal{S}}V_{S}]\rightarrow\prod_{S\in\mathcal{S}}\Ext[A][1][\mathcal{A}][V_{S}]$
is bijective. And hence, there is an extension $\epsilon:\:\suc[\prod_{S\in\mathcal{S}}V_{S}][\overline{A}][A][\iota][\pi]$
such that $\overline{\pi_{S}^{V}\cdot\epsilon}=\overline{\epsilon_{S}}$,
for all $B\in\mathcal{S}$. We claim that $\pi:\overline{A}\twoheadrightarrow A$
is a projective effacement of $A$. Indeed, let $B$ be an object
in $\G$ and let $F:D\to\G$ be a $D$-diagram on $\G$ such that:
(i) $D$ is a $\lambda$-directed poset; (ii) $F(d)\in\mathcal{S}$,
for all $d\in D$; and (iii) $\mathsf{colim}_{D}F(d)=B$. Now, for
each $d$ in $D$ we consider an extension in $\G$ of the form: 
\[
\eta_{d}:\:\suc[F(d)][K_{d}][A][\iota_{d}][\pi_{d}]
\]
Thus, there is $f_{d}\in\Hom[\G][F(d)^{X_{F(d)}}][F(d)]$ such that
$\overline{f_{d}\cdot\epsilon_{F(d)}}=\overline{\eta_{d}}$. In particular,
we get $\overline{\eta_{d}}=\overline{(f_{d}\circ\pi_{F(d)}^{V})\cdot\epsilon}$.
This fact shows that the canonical morphism $\Ext[A][1][\G][F(d)]\to\Ext[\overline{A}][1][\G][F(d)]$
is zero. On the other hand, observe that we have the following commutative
diagram\\
\noindent\begin{minipage}[t]{1\columnwidth}%
\[ 
\begin{tikzpicture}[-,>=to,shorten >=1pt,auto,node distance=1.5cm,main node/.style=,x=3.5cm,y=1cm]
   \node (1)  at (0,0)   {$\operatorname{Ext}^{1}_{\mathcal{G}}(A,\mathsf{colim}_DF(d))$};    
   \node (2)  at (1.5,0)          {$\operatorname{Ext}^{1}_{\mathcal{G}}(\overline{A},\mathsf{colim}_DF(d))$};
       \node (3)  at (0,1.5)          {$\mathsf{colim}_D\operatorname{Ext}^{1}_{\mathcal{G}}(A,F(d))$};    
   \node (4)  at (1.5,1.5)          {$\mathsf{colim}_D\operatorname{Ext}^{1}_{\mathcal{G}}(\overline{A},F(d))$}; 
  
\draw[->, thin]  (1)  to  node  {$$} (2);
\draw[<-, thin]  (2)  to  node  {$$} (4);
\draw[<-, thin]  (1)  to  node  {$$} (3);
\draw[->, thin]  (3)  to  node  {$$} (4);
\end{tikzpicture}
\]
\end{minipage}\\
 where the morphism on the left is an isomorphism since $A$ is $\lambda$-FP2.
Hence, we deduce that the canonical morphism $\Ext[A][1][\G][B]\to\Ext[\overline{A}][1][\G][B]$
is zero, since the morphism on the top of the diagram is null. 
\end{proof}
\begin{cor}
\cite[Corollary 1.4]{roos2006derived}\label{cor:roos} Let $\G$
be a Grothendieck category. Then, $\G$ is Ab4{*} if, and only if,
every object of $\G$ has a projective effacement. 
\end{cor}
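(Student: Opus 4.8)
The forward implication requires no new work: it is exactly Proposition \ref{prop:lambda-p2 vs proj eff}, which produces a projective effacement for every object of an Ab4{*} Grothendieck category. So the whole task is the converse, and the plan is to feed projective effacements into the dual form of Theorem \ref{thm:Ab4 vs ext}. Since $\G$ is Grothendieck, it is bicomplete (hence Ab3{*}) and Ext-small, so that theorem applies: $\G$ is Ab4{*} precisely when the canonical map $\Phi:\Ext[B][1][\G][\prod_{i\in I}A_{i}]\to\prod_{i\in I}\Ext[B][1][\G][A_{i}]$ is bijective for every object $B$ and every family $\{A_{i}\}_{i\in I}$. By the dual of \cite[Lemma 4.2]{argudin2022exactness}, $\Phi$ is always injective, so I only need to establish surjectivity.

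Fix $B$ and choose a projective effacement $\pi:\overline{B}\twoheadrightarrow B$, with associated short exact sequence $\epsilon:\suc[K][\overline{B}][B][\iota][\pi]$. By the dual of Remark \ref{rem: equivalencias injective effacement}, being a projective effacement means $\Ext[\pi][1][\G][M]=0$ for every $M$, which (via the long exact sequence obtained by applying $\Hom[\G][-][M]$ to $\epsilon$) is equivalent to the surjectivity of the connecting morphism $\delta_{M}:\Hom[\G][K][M]\to\Ext[B][1][\G][M]$ for every $M$. Now take any $(\overline{\eta_{i}})_{i\in I}\in\prod_{i\in I}\Ext[B][1][\G][A_{i}]$. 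Using the surjectivity of $\delta_{A_{i}}$, I would lift each component to a morphism $h_{i}:K\to A_{i}$ with $\delta_{A_{i}}(h_{i})=\overline{\eta_{i}}$, and then assemble the $h_{i}$ through the universal property of the product into a single morphism $h:K\to\prod_{i\in I}A_{i}$ satisfying $\pi_{i}^{A}\circ h=h_{i}$.

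The step that closes the argument is the naturality of the connecting morphism in its module variable. Describing $\delta_{M}(h)$ as the pushout $\overline{h\cdot\epsilon}$ of the fixed sequence $\epsilon$ along $h$, one has $\Ext[B][1][\G][\pi_{i}^{A}]\circ\delta_{\prod_{j}A_{j}}=\delta_{A_{i}}\circ\Hom[\G][K][\pi_{i}^{A}]$ for each projection $\pi_{i}^{A}$. Applying this identity to $h$ gives $\overline{\pi_{i}^{A}\cdot\delta_{\prod_{j}A_{j}}(h)}=\delta_{A_{i}}(h_{i})=\overline{\eta_{i}}$ for all $i$, which says exactly that $\Phi(\delta_{\prod_{j}A_{j}}(h))=(\overline{\eta_{i}})_{i\in I}$. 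Hence $\Phi$ is surjective, thus bijective, and $\G$ is Ab4{*} by Theorem \ref{thm:Ab4 vs ext}. The only point demanding care is the verification of this naturality square; since it reduces to the standard pushout description of $\delta$, I expect it to be routine bookkeeping rather than a genuine obstacle.
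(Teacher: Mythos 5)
Your proposal is correct, and its forward direction is handled exactly as in the paper, by citing Proposition \ref{prop:lambda-p2 vs proj eff}. For the converse, your reduction differs from the paper's even though the engine is the same. The paper reduces Ab4{*} to the existence of universal co-extensions for every pair of objects (the dual of Theorem \ref{thm:Ab4 y Ext-peque sii ext universales}), so it only needs the constant-family case: from the effacement sequence $\epsilon:\operatorname{Ker}(\pi)\hookrightarrow\overline{A}\twoheadrightarrow A$ it lifts a complete set of representatives $\{\overline{\eta_i}\}_{i\in I}$ of $\operatorname{Ext}^{1}_{\mathcal{G}}(A,B)$ to morphisms $f_i:\operatorname{Ker}(\pi)\to B$, assembles $f:\operatorname{Ker}(\pi)\to B^{I}$ via the product, and checks that $\overline{f\cdot\epsilon}$ is a universal co-extension of $B$ by $A$. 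You instead reduce Ab4{*} to the bijectivity of $\Phi$ (the dual of Theorem \ref{thm:Ab4 vs ext}) and verify surjectivity of $\Phi$ directly for an \emph{arbitrary} family $\{A_i\}_{i\in I}$, lifting an arbitrary tuple of classes through the connecting morphisms $\delta_{A_i}$ and assembling over $\prod_{i\in I}A_i$. The mechanism in both cases is identical: kernel of the effacement, surjectivity of the connecting morphism, product assembly, pushout, and naturality of $\delta$ (your naturality square is indeed routine, since $\left(\pi_i^{A}\circ h\right)\cdot\epsilon$ and $\pi_i^{A}\cdot\left(h\cdot\epsilon\right)$ represent the same class). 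What your route buys is self-containment: it bypasses the universal co-extension machinery of Section \ref{Ab4} entirely, which works precisely because a projective effacement kills $\operatorname{Ext}^{1}_{\mathcal{G}}(B,-)$ into \emph{every} object, not just into powers of a fixed one. What the paper's route buys is brevity given its already-established Theorem \ref{thm:Ab4 y Ext-peque sii ext universales}, and it makes explicit that projective effacements produce universal co-extensions, which is the conceptual bridge the paper wants to highlight.
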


\begin{proof}
From Proposition \ref{prop:lambda-p2 vs proj eff} and the dual of
Theorem \ref{thm:Ab4 y Ext-peque sii ext universales}, we only need
to check that there exists the universal co-extensions for any couple
of objects in $\G$. Indeed, let $A,B$ be objects in $\G$ and let
$\pi:\overline{A}\twoheadrightarrow A$ be a projective effacement
of $A$. Using the dual of Remark \ref{rem: equivalencias injective effacement}(d),
we deduce that $\Ext[A][1][\G][B]$ is a set. Therefore, we can find
a complete  set 
\[
\left\{ \eta_{i}:\:\suc[B][E_{i}][A][a_{i}][b_{i}]\right\} _{i\in I}
\]
of representatives of $\Ext[A][1][\G][B]$. Moreover, for each $i\in I$
there is $f_{i}\in\Hom[\G][\operatorname{Ker}(\pi)][B]$ such that
$\overline{f_{i}\cdot\epsilon}=\overline{\eta_{i}}$, where $\epsilon:=\:\suc[\operatorname{Ker}(\pi)][\overline{A}][A][\iota][\pi]$.
Now, it follows from the universal property of products that there
is a morphism $f:\operatorname{Ker}(\pi)\to B^{I}$ such that $\pi_{i}^{B}\circ f=f_{i}$,
for all $i\in I$. And hence, $\overline{\epsilon_{B}}:=\overline{f\cdot\epsilon}$
is a universal co-extension of $B$ by $A$ since $\overline{\pi_{i}^{B}\cdot\epsilon_{B}}=\overline{(\pi_{i}^{B}\circ f)\cdot\epsilon}=\overline{f_{i}\cdot\epsilon}=\overline{\eta_{i}}$,
for all $i\in I$. 
\end{proof}

\section{Ext-universal objects in Ab3 abelian categories\label{Ab3}}

As we have seen in the subsection \ref{sub.Ab4}, if $\A$ is an Ab3
abelian category which is not Ab4, then there are objects $B$ and
$A$ in $\A$ such that there is no universal extension of $B$ by
$A$. The goal of this section is characterize those objects $V$
such that there is always a universal extension of $V$ by any other
object in $\mathcal{A}$. 
\begin{defn}
\cite[Definition 5.6]{parra2021tilting}\label{def:extuniversal}
Let $\mathcal{\mathcal{A}}$ be an abelian category and $A\in\mathcal{A}$.
\begin{enumerate}
\item An object $V$ in $\A$ is said to be \textbf{$\operatorname{Ext}^{1}$-universal
by $A$} when a universal extension of $V$ by $A$ exists in $\mathcal{A}$.
The class of objects that satisfy being $\operatorname{Ext}^{1}$-universal
by $A$ will be denoted by $\Extu_{\mathcal{A}}(A)$. 
\item An object $V$ is said to be \textbf{$\operatorname{Ext}^{1}$-universal}
if $V\in\bigcap_{A\in\mathcal{A}}\Extu_{\mathcal{A}}(A)$. The class
of $\operatorname{Ext}^{1}$-universal objects in $\A$ will be denoted
by $\Extu_{\mathcal{A}}.$ 
\end{enumerate}
\end{defn}

\begin{rem}
Let $\mathcal{A}$ be an $\operatorname{Ext}$-small Ab3 abelian category
and let $V\in\mathcal{A}$ be an $\operatorname{Ext}^{1}$-universal
object. It follows from Corollary \ref{cor:extension universal canonica}
that, for every $A\in\mathcal{A}$, we can consider the canonical
universal extension of $V$ by $A$. 
\end{rem}

\begin{lem}
\label{lem.sufficientcondi} Let $\mathcal{A}$ be an Ab3 abelian
category, and $A,V$ be objects in $\mathcal{A}$ such that $X:=\Ext[V][1][][A]$
is a set. Consider the following statements:
\begin{enumerate}
\item $V\in\Extu_{\mathcal{A}}(A^{(X)})$; 
\item if $\{\eta_{x}:\:\suc[A][E_{x}][V][f_{x}][g_{x}]\}_{x\in X}$ is a
complete set of representatives of $\Ext[V][1][][A]$, then $\bigoplus_{x\in X}f_{x}:A^{(X)}\rightarrow\bigoplus_{x\in X}E_{x}$
is monic, and hence $\bigoplus_{x\in X}\eta_{x}$ is a short exact
sequence; 
\item $V\in\Extu_{\mathcal{A}}(A)$. 
\end{enumerate}
Then, $(a)\Rightarrow(b)$ and $(b)\Rightarrow(c)$. 
\end{lem}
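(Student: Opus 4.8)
The plan is to establish the two implications separately, in each case converting the combinatorial statement into the surjectivity of a connection morphism and then exploiting the universal properties of the relevant coproducts. Throughout I write $\mu_{x}^{A}\colon A\to A^{(X)}$, $\mu_{x}^{E}\colon E_{x}\to\bigoplus_{x}E_{x}$ and $\mu_{x}^{V}\colon V\to V^{(X)}$ for the canonical inclusions.

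For $(a)\Rightarrow(b)$, I would start from a universal extension $\epsilon\colon\suc[A^{(X)}][D][V^{(Y)}][a][b]$ of $V$ by $A^{(X)}$, whose existence is exactly hypothesis (a). The aim is to produce a single morphism $c\colon\bigoplus_{x\in X}E_{x}\to D$ with $c\circ(\bigoplus_{x\in X}f_{x})=a$; since $a$ is monic this forces $\bigoplus_{x\in X}f_{x}$ to be monic, which is (b) (the right-hand epimorphism and the middle exactness are then automatic, the coproduct functor being right exact in an Ab3 category, so that $\bigoplus_{x}g_{x}$ realises $\Cok[\bigoplus_{x}f_{x}]$). To build $c$ I would push each representative forward along $\mu_{x}^{A}$, obtaining $\overline{\mu_{x}^{A}\cdot\eta_{x}}\in\Ext[V][1][][A^{(X)}]$. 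By Definition \ref{def:extension universal}(c) the connection morphism $\delta\colon\Hom[][V][V^{(Y)}]\to\Ext[V][1][][A^{(X)}]$ attached to $\epsilon$ is surjective, so for each $x$ there is $h_{x}\colon V\to V^{(Y)}$ with $\overline{\epsilon\cdot h_{x}}=\overline{\mu_{x}^{A}\cdot\eta_{x}}$. Reading this equality through the standard bifunctorial correspondence (a pushforward of $\eta_{x}$ agreeing with a pullback of $\epsilon$ yields a morphism of short exact sequences $(\mu_{x}^{A},c_{x},h_{x})\colon\eta_{x}\to\epsilon$), I obtain $c_{x}\colon E_{x}\to D$ with $c_{x}\circ f_{x}=a\circ\mu_{x}^{A}$. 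Assembling the $c_{x}$ by the universal property of $\bigoplus_{x}E_{x}$ gives $c$ with $c\circ\mu_{x}^{E}=c_{x}$, and a second appeal to the universal property of $A^{(X)}=\bigoplus_{x}A$ upgrades the pointwise identities $c_{x}\circ f_{x}=a\circ\mu_{x}^{A}$ to $c\circ(\bigoplus_{x}f_{x})=a$, as desired.

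For $(b)\Rightarrow(c)$, I would use (b) to regard $\bigoplus_{x}\eta_{x}$ as a genuine short exact sequence $\suc[A^{(X)}][\bigoplus_{x}E_{x}][V^{(X)}][\bigoplus f_{x}][\bigoplus g_{x}]$ and then push it out along the co-diagonal $\nabla_{X}^{A}\colon A^{(X)}\to A$. This is precisely the pushout square of Lemma \ref{lem:la inversa} for the family $\{\eta_{x}\}_{x\in X}$ (all sharing the subobject $A$), so the induced left-hand map is the $f_{\eta}\colon A\to Z_{\eta}$ of that lemma. Since a pushout of a monomorphism is again a monomorphism in an abelian category, $f_{\eta}$ is monic; hence Lemma \ref{lem:la inversa}(a) yields a short exact sequence $\eta''\colon\suc[A][Z_{\eta}][V^{(X)}][f_{\eta}][g_{\eta}]$ with $\Psi(\overline{\eta''})=(\overline{\eta_{x}})_{x\in X}$, that is, $\overline{\eta''\cdot\mu_{x}^{V}}=\overline{\eta_{x}}$ for all $x$. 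Finally, because $\{\overline{\eta_{x}}\}_{x\in X}$ is a complete set of representatives of $\Ext[V][1][][A]$, the connection morphism $\delta\colon\Hom[][V][V^{(X)}]\to\Ext[V][1][][A]$ of $\eta''$ sends $\mu_{x}^{V}\mapsto\overline{\eta_{x}}$ and is therefore surjective; by Definition \ref{def:extension universal}(c), $\eta''$ is a universal extension of $V$ by $A$, which is (c).

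I expect the first implication to be the main obstacle: its content is to convert the abstract surjectivity of the connection map into a concrete factorization of the monomorphism $a$ through $\bigoplus_{x}f_{x}$, and the delicate point is to make the pointwise data $(h_{x},c_{x})$ cohere through the two universal properties so that $c\circ(\bigoplus_{x}f_{x})=a$ holds on the nose rather than merely summand by summand. By contrast, $(b)\Rightarrow(c)$ needs only the standard fact that pushouts preserve monomorphisms, after which Lemma \ref{lem:la inversa} does all the bookkeeping.
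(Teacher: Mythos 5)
Your proof is correct and takes essentially the same approach as the paper. For $(a)\Rightarrow(b)$ the paper also pushes each $\eta_x$ forward along $\mu_x^A$, realizes the classes $\overline{\mu_x^A\cdot\eta_x}$ as pullbacks of the universal extension of $V$ by $A^{(X)}$ (it packages this step through its Lemma \ref{lem:phi es biyectiva}, whose proof is precisely your connection-morphism argument), and assembles a morphism out of $\bigoplus_{x\in X}E_x$ exhibiting $\bigoplus_{x\in X}f_x$ as a factor of a monomorphism; for $(b)\Rightarrow(c)$ both arguments feed the monic $\bigoplus_{x\in X}f_x$ into the pushout construction of Lemma \ref{lem:la inversa} and conclude surjectivity of the connection morphism from completeness of the set of representatives.
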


\begin{proof}
$(b)\Rightarrow(c)$ It follows straightforward from Theorem \ref{teo:secondmain},
\cite[Lemma 4.2]{argudin2022exactness}, Lemma \ref{lem:la extension colimite mediante la codiagonal },
and Lemma \ref{lem:phi es biyectiva}.

\noindent %
\begin{minipage}[t]{0.5\columnwidth}%
\indent $(a)\Rightarrow(b)$ Consider the set of extensions $\left\{ \overline{\mu_{x}^{A}\cdot\eta_{x}}\right\} _{x\in X}$.
By Lemma \ref{lem:phi es biyectiva} there is an extension $\overline{\rho}\in\Ext[V^{(X)}][1][][A^{(X)}]$
such that $\overline{\rho\cdot\mu_{x}^{V}}=\overline{\mu_{x}^{A}\cdot\eta_{x}}$,
for all $x\in X$. As a consequence of this, we have the commutative
diagram on the right, where $\rho:\:\suc[A^{(X)}][W][V^{(X)}][f][g]$.
Now, by the universal property of coproducts, we get that there is
$\gamma\in\Hom[][\bigoplus_{x\in X}E_{x}][W]$ such that $\gamma\circ\mu_{x}^{E}=\mu''_{x}\circ\mu'_{x}$,
for all $x\in X$. Observe that $\gamma\circ(\bigoplus_{x\in X}f_{x})=f$.
And hence, since $f$ is monic, $\bigoplus_{x\in X}f_{x}$ is a monomorphism.%
\end{minipage}\hfill{}%
\fbox{\begin{minipage}[t]{0.45\columnwidth}%
\[ 
\begin{tikzpicture}[-,>=to,shorten >=1pt,auto,node distance=1.5cm,main node/.style=,x=1.5cm,y=1.5cm]

\node[main node][] (A) at (-1.2,-2.5)    {$A^{(X)}$};
   \node[main node][] (B) at (0.2,-2.5)     {$\bigoplus _{x\in X} E_x $};

   \node[main node][] (2) at (0,0)      {$A$};

       \node[main node][] (3) [right of=2]  {$E_x$};

       \node[main node][] (4) [right of=3]  {$V$};

  \node[main node][] (2') [below of=2]  {$A^{(X)}$};

      \node[main node][] (3') [right of=2']  {$W_x$};       

\node[main node][] (4') [right of=3']  {$V$};

\node[main node][] (2'') [below of=2']  {$A^{(X)}$};

   \node[main node][] (3'') [right of=2'']  {$W$};

       \node[main node][] (4'') [right of=3'']  {$V^{(X)}$};    

\draw[right hook->, thin]   (2)  to [below] node  {$f_x$}  (3);

\draw[->>, thin]   (3)  to [below] node  {$g_x$}  (4);

\draw[right hook->, thin]   (2')  to node  {$f'_x$}  (3');
\draw[->>, thin]   (3')  to node  {$g'_x$}  (4');
\draw[right hook->, thin]   (2'')  to node  {$f$}  (3'');
\draw[->>, thin]   (3'')  to node  {$g$}  (4'');
\draw[right hook->, thin]   (2)  to node  {$\mu ^{A} _x$}  (2');
\draw[right hook->, thin]   (3)  to node  {$\mu '_x$}  (3');
\draw[-, double]   (4)  to node  {$$}  (4');
\draw[-, double]   (2')  to node  {$$}  (2'');
\draw[right hook->, thin]   (3')  to node  {$\mu ''_x$}  (3'');
\draw[right hook->, thin]   (4')  to [left] node  {$\mu ^{V}_x$}  (4'');
\draw[->, thin]   (2)  to [above left] node  {$\mu ^A _x$}  (A);
\draw[->, thin]   (3)  to [left] node  {$\mu ^E _x$}  (B);
\draw[<-, dashed]   (2'')  to [above left] node  {$1_A$}  (A);
\draw[<-, dashed]   (3'')  to node  {$\gamma$}  (B);
\draw[->, thin]   (A)  to [below] node  {$\bigoplus _{x\in X}f_x$}  (B);    
\end{tikzpicture}
\]
\end{minipage}}
\end{proof}
Let us study how to find new $\operatorname{Ext}^{1}$-universal objects
from old ones. 
\begin{lem}
Let $\mathcal{A}$ be an Ab3 abelian category, $\{V_{i}\}_{i\in I}$
be a set of objects in $\mathcal{A}$ and $A\in\mathcal{A}$. Then,
the following statements hold true (here $V:=\bigoplus_{i\in I}V_{i}$
and $X:=\Ext[V][1][][A]$):
\begin{enumerate}
\item If there is a universal extension of $V_{i}$ by $A$, for all $i\in I$,
then $\Ext[V^{(S)}][1][][A]$ is a set, for all set $S$. In particular,
$X$ is a set. 
\item If $V_{i}\in\Extu_{\mathcal{A}}(A)$, for all $i\in I$ and the natural
maps {\small{}{} 
\[
\Ext[V^{(X)}][1][][A]\rightarrow\prod_{i\in I}\Ext[V_{i}^{(X)}][1][][A]\mbox{ and }\Ext[V][1][][A]\rightarrow\prod_{i\in I}\Ext[V_{i}][1][][A]
\]
} are bijective, then $V\in\Extu_{\mathcal{A}}(A)$. 
\end{enumerate}
\end{lem}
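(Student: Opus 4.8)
The plan is to prove the two parts separately, in both cases reducing everything to the canonical comparison map $\Psi$ and the criterion of Corollary \ref{cor:caract ext univ}. For part (a) the point is purely set-theoretic: I would exploit that $\Psi$ is always injective. For any set $S$ the coproduct reindexes as $V^{(S)}=\bigoplus_{(i,s)\in I\times S}V_{i}$, so the canonical map
\[
\Psi:\Ext[V^{(S)}][1][][A]\to\prod_{(i,s)\in I\times S}\Ext[V_{i}][1][][A]
\]
is injective by \cite[Lemma 4.2]{argudin2022exactness}. Since each $V_{i}$ admits a universal extension by $A$, Remark \ref{rem:extensiones universales}(d) guarantees that every $\Ext[V_{i}][1][][A]$ is a set, so the codomain is a set-indexed product of sets, hence a set. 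An injection from a class into a set forces the class to be a set, so $\Ext[V^{(S)}][1][][A]$ is a set; taking $S$ a singleton (so $V^{(S)}=V$) gives that $X=\Ext[V][1][][A]$ is a set.

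For part (b) I would reduce to Corollary \ref{cor:caract ext univ}: it suffices to check that $X$ is a set and that the canonical map $\Psi_{V}:\Ext[V^{(X)}][1][][A]\to\Ext[V][1][][A]^{X}$ is bijective. That $X$ is a set follows from part (a) (or directly from the second prescribed bijection, whose target is a set). The key identifications are $V=\bigoplus_{i\in I}V_{i}$ and $V^{(X)}=\bigoplus_{i\in I}V_{i}^{(X)}$, under which the two hypothesized bijections are exactly the canonical maps
\[
\Psi^{(2)}:\Ext[V^{(X)}][1][][A]\to\prod_{i}\Ext[V_{i}^{(X)}][1][][A],\qquad \Psi^{(1)}:\Ext[V][1][][A]\to\prod_{i}\Ext[V_{i}][1][][A].
\]
Moreover, since each $V_{i}\in\Extu_{\mathcal{A}}(A)$, Lemma \ref{lem:phi es biyectiva} applied to $B=V_{i}$ and the set $X$ yields that $\Psi_{i}:\Ext[V_{i}^{(X)}][1][][A]\to\Ext[V_{i}][1][][A]^{X}$ is bijective for every $i$.

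The heart of the argument is a compatibility of the maps $\Psi$ under regrouping of a coproduct. Writing $V^{(X)}=\bigoplus_{(x,i)\in X\times I}V_{i}$, I would consider the totally refined canonical map $\Psi^{\mathrm{full}}:\Ext[V^{(X)}][1][][A]\to\prod_{(x,i)}\Ext[V_{i}][1][][A]$ sending $\overline{\epsilon}$ to the family $(\overline{\epsilon\cdot\mu_{(x,i)}})_{(x,i)}$ of restrictions along the summand inclusions $\mu_{(x,i)}:V_{i}\to V^{(X)}$. Because $\mu_{(x,i)}=\mu_{x}^{V}\circ\mu_{i}^{V_{\bullet}}$ factors through the two intermediate coproducts in either order, $\Psi^{\mathrm{full}}$ factors in two ways: as $(\prod_{x}\Psi^{(1)})\circ\Psi_{V}$ (grouping by $x$ first) and, up to the canonical swap isomorphism $\tau$ exchanging the two product indexings, as $(\prod_{i}\Psi_{i})\circ\Psi^{(2)}$ (grouping by $i$ first). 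This gives the identity
\[
\tau\circ\Big(\prod_{x\in X}\Psi^{(1)}\Big)\circ\Psi_{V}=\Big(\prod_{i\in I}\Psi_{i}\Big)\circ\Psi^{(2)}.
\]
Since $\tau$, $\prod_{x}\Psi^{(1)}$, $\prod_{i}\Psi_{i}$, and $\Psi^{(2)}$ are all bijective, the right-hand side is bijective; as $\tau$ and $\prod_{x}\Psi^{(1)}$ are bijective, so is $\Psi_{V}$, and Corollary \ref{cor:caract ext univ} then delivers $V\in\Extu_{\mathcal{A}}(A)$.

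The main obstacle I anticipate is precisely the bookkeeping of the last paragraph: checking carefully, from the defining formula $\Psi(\overline{\epsilon})=(\overline{\epsilon\cdot\mu_{i}})_{i}$ and $\mu_{(x,i)}=\mu_{x}^{V}\circ\mu_{i}^{V_{\bullet}}$, that the two groupings of $\Psi^{\mathrm{full}}$ genuinely agree after the swap $\tau$, and keeping the iterated-product identification $\prod_{i}\prod_{x}\cong\prod_{x}\prod_{i}$ straight. These verifications are routine but must be made explicit; no homological input beyond Lemma \ref{lem:phi es biyectiva} and the injectivity of $\Psi$ is required.
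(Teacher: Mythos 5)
Your proposal is correct and follows essentially the same route as the paper: both parts rest on the injectivity of $\Psi$ from \cite[Lemma 4.2]{argudin2022exactness}, the bijectivity of each $\Psi_{i}$ supplied by Lemma \ref{lem:phi es biyectiva}, the regrouping identification $V^{(X)}=\bigoplus_{i\in I}\bigl(V_{i}^{(X)}\bigr)$, and the criterion of Corollary \ref{cor:caract ext univ}. The paper packages part (b) as a chain of four isomorphisms whose correspondence rule is then chased to identify the composite with $\Psi_{V}$, which is exactly the content of your commutative square $\tau\circ\bigl(\prod_{x}\Psi^{(1)}\bigr)\circ\Psi_{V}=\bigl(\prod_{i}\Psi_{i}\bigr)\circ\Psi^{(2)}$.
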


\begin{proof}
Before proceeding with the proof, let us recall the following fact.
Let $S$ be a set. Consider the set $\{V_{i,s}\}_{(i,s)\in I\times S}\subseteq\mathcal{A}$,
where $V_{i,s}:=V_{i}$, for all $s\in S$, and the coproduct $V':=\bigoplus_{(i,s)\in I\times S}V_{i,s}$
with the canonical inclusions $\alpha_{i,s}:V_{i,s}\rightarrow V'$.
In one hand, we can see that, for $i\in I$ fixed, there is a unique
morphism $u_{i}:V_{i}^{(S)}\rightarrow V'$ such that $u_{i}\circ v_{i,s}=\alpha_{i,s}$,
for all $s\in S$, where $v_{i,s}:V_{i}\rightarrow V_{i}^{(S)}$ is
the $s$-th canonical inclusion. Moreover, it can be shown that $V'$
is the coproduct $\bigoplus_{i\in I}V_{i}^{(S)}$ and that $\{u_{i}:V_{i}^{(S)}\rightarrow V'\}_{i\in I}$
is the set of canonical inclusions. On the other hand, we can see
that, for $s\in S$ fixed, there is a unique morphism $w_{s}:\bigoplus_{i\in I}V_{i,s}\rightarrow V'$
such that $w_{s}\circ\mu_{i}^{V}=\alpha_{i,s}$, for all $i\in I$,
where $\mu_{i}^{V}:V_{i}\rightarrow\bigoplus_{i\in I}V_{i}$ is the
$i$-th canonical inclusion. Moreover, it can also be shown that $V'$
is the coproduct $\left(\bigoplus_{i\in I}V_{i}\right)^{(S)}$ and
that $\{w_{s}:V_{i}^{(X)}\rightarrow V'\}_{s\in S}$ is the set of
canonical inclusions. In other words, $\bigoplus_{i\in I}\left(V_{i}^{\left(S\right)}\right)=\left(\bigoplus_{i\in I}V_{i}\right)^{(S)}$.
We will use the notation presented in this paragraph throughout the
proof for $S=X$.
\begin{enumerate}
\item Let $S$ be a set. Observe that, for each $i\in I$, $\Ext[V_{i}][1][][A]$
is a set by Definition \ref{def:extension universal}(c). Thus, $\Ext[V_{i}^{(S)}][1][][A]$
also is a set by Lemma \ref{lem:phi es biyectiva}. Moreover, since
there is an injective function 
\[
\Ext[\bigoplus_{i\in I}V_{i}^{(S)}][1][][A]\rightarrow\prod_{i\in I}\Ext[V_{i}^{(S)}][1][][A]
\]
(see \cite[Lemma 4.2]{argudin2022exactness}), $\Ext[\bigoplus_{i\in I}V_{i}^{(S)}][1][][A]$
is a set. 
\item We have the following chain of isomorphisms. 
\begin{alignat*}{1}
\Ext[\left(\bigoplus_{i\in I}V_{i}\right)^{(X)}][1][][A] & \overset{(1)}{\cong}\Ext[\bigoplus_{i\in I}\left(V_{i}^{(X)}\right)][1][][A]\\
 & \overset{(2)}{\cong}\prod_{i\in I}\Ext[V_{i}^{(X)}][1][][A]\\
 & \overset{(3)}{\cong}\prod_{i\in I}\Ext[V_{i}][1][][A]^{X}\\
 & \overset{(4)}{\cong}\Ext[\bigoplus_{i\in I}V_{i}][1][][A]^{X}\mbox{.}
\end{alignat*}
Indeed, (1) can be seen as a equality by the arguments above; (2)
and (4) are our hypotheses using the fact that 
\[
\prod_{i\in I}\Ext[V_{i}][1][][A]^{X}=\left(\prod_{i\in I}\Ext[V_{i}][1][][A]\right)^{X}=\prod_{i\in I}\left(\Ext[V_{i}][1][][A]^{X}\right);
\]
and (3) follows from Lemma \ref{lem:phi es biyectiva}. Observe that
the correspondence rule of this isomorphism is given by 
\[
\overline{\eta}\overset{(1)}{\mapsto}\overline{\eta}\overset{(2)}{\mapsto}\left(\overline{\eta\cdot u_{i}}\right)_{i\in I}\overset{(3)}{\mapsto}\left(\overline{\eta\cdot(u_{i}\circ v_{i,x})}\right)_{i\in I,x\in X}\overset{(4)}{\mapsto}\left(\overline{\eta_{x}}\right)_{x\in X}\mbox{,}
\]
where $\overline{\eta_{x}}$ is an extension such that $\overline{\eta_{x}\cdot\mu_{i}^{V}}=\overline{\eta\cdot(u_{i}\circ v_{i,x})}$,
for all $i\in I$. Finally, note that $w_{x}\circ\mu_{i}^{V}=\alpha_{i,x}=u_{i}\circ v_{i,x}$,
for all $(i,x)\in I\times X$ by the above arguments. Hence, $\overline{\eta_{x}\cdot\mu_{i}^{V}}=\overline{\eta\cdot(w_{x}\circ\mu_{i}^{V})}$,
for all $\left(i,x\right)\in I\times X$. This means that $\overline{\eta_{x}}=\overline{\eta\cdot w_{x}}$
because $\Ext[\mu_{i}^{V}][1][][A]$ is injective. Therefore, the
correspondence rule of the chain of isomorphisms is $\overline{\eta}\mapsto\overline{\eta\cdot w_{x}}$.
This means that the natural map $\Ext[V^{(X)}][1][][A]\rightarrow\Ext[V][1][][A]^{X}$
is bijective. And hence, by Corollary \ref{cor:caract ext univ},
there is a universal extension of $V$ by $A$. 
\end{enumerate}
\end{proof}
Let $\mathcal{A}$ be an Ab3 abelian category and $A\in\mathcal{A}$.
In general the class $\Extu_{\mathcal{A}}(A)$ is not closed under
arbitrary coproducts. An example of this can be seen in Corollary
\ref{exa:contraejemplo }.
\begin{cor}
\label{cor:coproducto de universales}Let $\mathcal{A}$ be an Ab3
abelian category. Then, the following statements hold true:
\begin{enumerate}
\item The class $\Extu_{\mathcal{A}}$ is closed under finite coproducts. 
\item If $\{V_{i}\}_{i\in I}$ is a set in $\Extu_{\mathcal{A}}$ and the
natural map 
\[
\Ext[\bigoplus_{i\in I}\left(V_{i}^{(X)}\right)][1][][A]\rightarrow\prod_{i\in I}\Ext[V_{i}^{(X)}][1][][A]
\]
is bijective for every set $X$ and every $a\in\mathcal{A}$, then
$\bigoplus_{i\in I}V_{i}\in\Extu_{\mathcal{A}}$. 
\end{enumerate}
\end{cor}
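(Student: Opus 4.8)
The plan is to deduce both statements directly from the previous lemma by verifying, in each case, the two bijectivity hypotheses appearing in its part (b). Throughout I fix an object $A\in\mathcal{A}$, write $V:=\bigoplus_{i\in I}V_i$ and $X:=\Ext[V][1][][A]$, and recall from the proof of that lemma the identification $V^{(S)}=\bigoplus_{i\in I}\left(V_i^{(S)}\right)$, valid for every set $S$. Since each $V_i\in\Extu_{\mathcal{A}}\subseteq\Extu_{\mathcal{A}}(A)$, part (a) of that lemma guarantees that $X$ is in fact a set, so the coproduct $V^{(X)}$ is defined and the lemma is applicable. It then remains, for a fixed $A$, to check that the two maps $\Ext[V^{(X)}][1][][A]\to\prod_{i\in I}\Ext[V_i^{(X)}][1][][A]$ and $\Ext[V][1][][A]\to\prod_{i\in I}\Ext[V_i][1][][A]$ are bijective; once this is done, the lemma gives $V\in\Extu_{\mathcal{A}}(A)$, and letting $A$ range over all objects yields $V\in\Extu_{\mathcal{A}}$.

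For statement (a) the index set $I$ is finite, and in this situation both required maps are automatically bijective. Indeed, $\Psi$ is always injective by \cite[Lemma 4.2]{argudin2022exactness}, and, being indexed by a finite set, $\bigoplus_{i\in I}\left(V_i^{(X)}\right)$ is a biproduct; since $\Ext[-][1][][A]$ is an additive contravariant functor, it carries this finite biproduct to $\prod_{i\in I}\Ext[V_i^{(X)}][1][][A]$ precisely through $\Psi$, so $\Psi$ is also surjective. Applying the same additivity directly to the finite biproduct $\bigoplus_{i\in I}V_i=V$ shows that $\Ext[V][1][][A]\to\prod_{i\in I}\Ext[V_i][1][][A]$ is bijective as well. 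Hence part (b) of the previous lemma applies and delivers the desired conclusion.

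For statement (b) the first hypothesized bijection is exactly the first map required by the lemma: rewriting $V^{(X)}=\bigoplus_{i\in I}\left(V_i^{(X)}\right)$, it is the standing hypothesis evaluated at the set $X$. For the second required map I specialize the standing hypothesis to a one-element set $X_0$; then $\bigoplus_{i\in I}\left(V_i^{(X_0)}\right)=\bigoplus_{i\in I}V_i=V$ and $\prod_{i\in I}\Ext[V_i^{(X_0)}][1][][A]=\prod_{i\in I}\Ext[V_i][1][][A]$, so the hypothesized map becomes exactly $\Ext[V][1][][A]\to\prod_{i\in I}\Ext[V_i][1][][A]$ and is therefore bijective. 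With both hypotheses of part (b) verified, the lemma again gives $V\in\Extu_{\mathcal{A}}(A)$ for the fixed $A$, and hence $V\in\Extu_{\mathcal{A}}$.

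Neither statement presents a genuine difficulty once the previous lemma is available; both are bookkeeping reductions to that result. The only point requiring a little care is the finite case in (a), where the bijectivity of $\Psi$ must be obtained for free from the additivity of $\Ext[-][1][][A]$ on finite biproducts, with no Ab4-type assumption; for the arbitrary coproduct in (b) this additivity no longer suffices, which is exactly why the bijectivity of the two natural maps has to be imposed as a hypothesis.
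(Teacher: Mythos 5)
Your proposal is correct and follows the paper's intended route: the corollary is stated as an immediate consequence of the preceding lemma, and your verification of its two bijectivity hypotheses (via additivity of $\operatorname{Ext}^{1}(-,A)$ on finite biproducts for part (a), and via specializing the hypothesized bijection to $X=\Ext[V][1][][A]$ and to a singleton set for part (b)) is exactly the reduction the paper leaves implicit. The only detail worth keeping explicit, which you do handle, is that $X$ is a set by part (a) of the lemma before it can be used as an index set.
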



\begin{lem}
Let $\mathcal{A}$ be an Ab3 abelian category, $\omega:\:\suc[A][E][V^{(X)}][f][g]$
be a universal extension, $\eta:\:\suc[U][V][W][a][b]$ be an exact
sequence, and $(g':E'\rightarrow U^{(X)},e:E'\rightarrow E)$ be the
pull-back of $g$ and $a^{(X)}$. If any of the following conditions
hold true, then $\omega\cdot a^{(X)}$ is a universal extension of
$U$ by $A$.
\begin{enumerate}
\item $\Ext[V][1][][e]$ is injective and $\Ext[a][1][][A]$ is surjective. 
\item $\Ext[U][1][][e]$ is injective and $\Ext[a][1][][A]$ is surjective. 
\item If $\omega$ is the canonical universal extension and $a$ is a split
monomorphism. 
\end{enumerate}
\end{lem}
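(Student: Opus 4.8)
The plan is to realize $\omega\cdot a^{(X)}$ as the pullback short exact sequence $\suc[A][E'][U^{(X)}][f'][g']$ and then to check that it satisfies one of the equivalent conditions of Definition \ref{def:extension universal}. From the construction of the pullback I record the identity $e\circ f'=f$, and I will freely use the bifunctoriality of $\operatorname{Ext}^{1}$ (contravariant via pullback in the first variable, covariant via pushout in the second), together with the fact that, since $\omega$ is a universal extension of $V$ by $A$, one has $\Ext[V][1][][f]=0$ by Definition \ref{def:extension universal}(a).

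For hypotheses (a) and (b) I would verify condition (a) of Definition \ref{def:extension universal}, namely $\Ext[U][1][][f']=0$. Under (a): from $f=e\circ f'$ and functoriality, $0=\Ext[V][1][][f]=\Ext[V][1][][e]\circ\Ext[V][1][][f']$, so injectivity of $\Ext[V][1][][e]$ yields $\Ext[V][1][][f']=0$; then the naturality square of $\operatorname{Ext}^{1}$ over $a$ gives $\Ext[U][1][][f']\circ\Ext[a][1][][A]=\Ext[a][1][][E']\circ\Ext[V][1][][f']=0$, and surjectivity of $\Ext[a][1][][A]$ forces $\Ext[U][1][][f']=0$. Under (b): the same naturality relation applied to $f$ gives $\Ext[U][1][][f]\circ\Ext[a][1][][A]=\Ext[a][1][][E]\circ\Ext[V][1][][f]=0$, whence $\Ext[U][1][][f]=0$ by surjectivity of $\Ext[a][1][][A]$; finally $\Ext[U][1][][f]=\Ext[U][1][][e]\circ\Ext[U][1][][f']$ and injectivity of $\Ext[U][1][][e]$ give $\Ext[U][1][][f']=0$.

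For hypothesis (c) I would instead verify condition (c) of Definition \ref{def:extension universal}: that the connecting morphism $\delta:\Hom[][U][U^{(X)}]\to\Ext[U][1][][A]$ of $\omega\cdot a^{(X)}$, given by $h\mapsto\overline{(\omega\cdot a^{(X)})\cdot h}=\overline{\omega\cdot(a^{(X)}\circ h)}$, is surjective. Let $r:V\to U$ satisfy $r\circ a=1_{U}$. Given $\overline{\xi}\in\Ext[U][1][][A]$, set $x_{0}:=\overline{\xi\cdot r}\in\Ext[V][1][][A]=X$; since $\omega$ is the canonical universal extension, $\overline{\omega\cdot\mu_{x_{0}}^{V}}=\overline{\xi\cdot r}$. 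Using the naturality of the coproduct inclusions, $a^{(X)}\circ\mu_{x_{0}}^{U}=\mu_{x_{0}}^{V}\circ a$, I obtain
\[
\delta(\mu_{x_{0}}^{U})=\overline{\omega\cdot(a^{(X)}\circ\mu_{x_{0}}^{U})}=\overline{(\omega\cdot\mu_{x_{0}}^{V})\cdot a}=\overline{(\xi\cdot r)\cdot a}=\overline{\xi\cdot(r\circ a)}=\overline{\xi},
\]
so $\delta$ is surjective.

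The routine part is (a) and (b), which amounts to a diagram chase through the two naturality squares of $\operatorname{Ext}^{1}$ sitting over $a$, exploiting $e\circ f'=f$. The delicate point will be (c): here a pure diagram chase does not suffice, and one must use the precise bookkeeping of the canonical universal extension---the identification $X=\Ext[V][1][][A]$ and the equalities $\overline{\omega\cdot\mu_{x}^{V}}=\overline{\eta_{x}}$---to exhibit an explicit preimage. The key observation making this work is the naturality identity $a^{(X)}\circ\mu_{x}^{U}=\mu_{x}^{V}\circ a$, which together with the retraction $r$ lets one hit an arbitrary class $\overline{\xi}$ exactly.
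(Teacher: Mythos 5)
Your proposal is correct and takes essentially the same route as the paper's proof: parts (a) and (b) are the same diagram chase through the $\operatorname{Ext}^{1}$-naturality squares over $a$ using $f=e\circ f'$ and $\Ext[V][1][][f]=0$, and part (c) is the same retraction argument, using the canonical universal extension's bookkeeping and the identity $a^{(X)}\circ\mu_{x}^{U}=\mu_{x}^{V}\circ a$ (the paper phrases the conclusion as $\Ext[U][1][][f']=0$ while you invoke the equivalent clause on surjectivity of the connecting morphism, which is only a cosmetic difference).
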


\begin{proof}
It is a known fact that, in this scenario, we get the following commutative
diagram with exact rows. \\
\noindent\begin{minipage}[t]{1\columnwidth}%
\[ \begin{tikzpicture}[-,>=to,shorten >=1pt,auto,node distance=1.5cm,main node/.style=,x=1.5cm,y=1.5cm]
   \node[main node] (A) at (0,0)      {$\omega \cdot a^{(X)} :$};  
     \node[main node] (1) at (0,0)      {$$};      
 \node[main node] (2) [right of=1]  {$A$};     
  \node[main node] (3) [right of=2]  {$E'$};  
     \node[main node] (4) [right of=3]  {$U ^{(X)}$}; 
      \node[main node] (5) [right of=4]  {$$};

   \node[main node] (A) at (0,-1)      {$\omega :$}; 
      \node[main node] (1') [below of=1]      {$$}; 
      \node[main node] (2') [right of=1']  {$A$};  
     \node[main node] (3') [right of=2']  {$E$};  
     \node[main node] (4') [right of=3']  {$V ^{(X)}$}; 
      \node[main node] (5') [right of=4']  {$$};

\draw[right hook ->, thin]   (2)  to [below] node  {$f'$}  (3);
\draw[->>, thin]   (3)  to [below] node  {$g'$}  (4);
\draw[right hook ->, thin]   (2')  to node  {$f$}  (3');
\draw[->>, thin]   (3')  to node  {$g$}  (4');
\draw[-, double]   (2)  to node  {$$}  (2');
\draw[->, thin]   (3)  to node  {$e$}  (3');
\draw[->, thin]   (4)  to node  {$a^{(X)}$}  (4');
    \end{tikzpicture} \]
\end{minipage}\\
 Consider the diagram below, where $^{k}(?,-):=\Ext[?][k][\A][-]$,
for all $k\in\{1,2\}$. Observe that it is commutative since it is
obtained through the left square in the diagram above. Moreover, the
leftmost column is exact by the long exact sequence of homology induced
by $\eta$. Now, since $\omega$ is a universal extension, we have
that $\Ext[V][1][][f]=0$ by Definition \ref{def:extension universal}(a).
Let us prove that $\Ext[U][1][][f']=0$ in each case to conclude that
$\omega\cdot a^{(X)}$ is a universal extension.
\begin{enumerate}
\item If $\Ext[V][1][][e]$ is injective, then $\Ext[V][1][][f']=0$ since
$\Ext[V][1][][e]\circ\Ext[V][1][][f']=\Ext[V][1][][f]=0$. And thus,
\[
\Ext[U][1][][f']\circ\Ext[a][1][][A]=\Ext[a][1][][E']\circ\Ext[V][1][][f']=0\mbox{.}
\]
Hence, if in addition $\Ext[a][1][][A]$ is surjective, then $\Ext[U][1][][f']$
is the zero map.
\item[(b)] It is proved in a similar way as (a).
\end{enumerate}
\noindent %
\begin{minipage}[t]{0.4\columnwidth}%
\begin{enumerate}
\item[(c)] Since $a$ is a split monomorphism, there is $a'\in\Hom[][V][U]$
such that $a'a=1_{U}$. Let $\overline{\eta}\in\Ext[U][1][][A]$.
Observe that $\overline{\eta}\cdot a'\in\Ext[V][1][][A]$. And hence,
there is $i\in X$ such that $\overline{\omega}\cdot\mu_{i}^{V}=\overline{\eta}\cdot a'$.
Therefore, $\overline{\eta}=\overline{\eta\cdot(a'\circ a)}=\overline{\omega\cdot(\mu_{i}^{V}\circ a)}=\overline{\omega\cdot(a^{(X)}\circ\mu_{i}^{U})}$.
As a result, we have that $\Ext[U][1][][f']$ is the zero map.
\end{enumerate}
\end{minipage}\hfill{}%
\fbox{\begin{minipage}[t]{0.55\columnwidth}%
\[ \begin{tikzpicture}[-,>=to,shorten >=1pt,auto,node distance=3cm,main node/.style=,x=1.5cm,y=1.5cm]
   \node[main node] (1) at (0,0)      {$^1 (V,A)$};
       \node[main node] (2) [right of=1]  {$^1 (V,E')$};
       \node[main node] (3) at (330:2cm)  {$^1 (V,A)$};
       \node[main node] (4) [right of=3]  {$^1 (V,E)$};
       \node[main node] (5) [below of=1]  {$^1 (U,A)$};
       \node[main node] (6) [right of=5]  {$^1 (U,E')$};
       \node[main node] (7) [below of=3]  {$^1 (U,A)$};
       \node[main node] (8) [below of=4]  {$^1 (U,E)$};       
\node[main node] (9) at (0,-3)     {$^2 (W,A)$};

\draw[->, thin]   (1)  to node  {$\scriptstyle{ ^1 (V,f')}$}  (2);

\draw[-, double]   (1)  to node  {$$}  (3);

\draw[->, thin]   (1)  to node  {$\scriptstyle{ ^1 (a,A)}$}  (5);
\draw[->, thin]   (3)  to node  {$\scriptstyle{ ^1 (V,f)}$}  (4);
\draw[->, thin]   (3)  to node  {$$}  (7);

\draw[->, thin]   (2)  to node  {$\scriptstyle{ ^1 (V,e)}$}  (4);

\draw[->, thin]   (2)  to node  {$\scriptstyle{ ^1 (a,E')}$}  (6);

\draw[->, thin]   (4)  to node  {$\scriptstyle{ ^1 (a,E)}$}  (8);

\draw[->, thin]   (5)  to node  {$\scriptstyle{ ^1 (U,f')}$}  (6);

\draw[-, double]   (5)  to node  {$$}  (7);

\draw[->, thin]   (7)  to node  {$\scriptstyle{ ^1 (U,f)}$}  (8);

\draw[->, thin]   (6)  to node  {$\scriptstyle{ ^1 (U,e)}$}  (8);

\draw[->, thin]   (5)  to node  {$$}  (9);

    \end{tikzpicture}
\]
\end{minipage}}

\end{proof}
\begin{cor}
\textup{\label{cor:sumandodirectodeext-universal} Let $\mathcal{A}$}
be an Ab3 abelian category and $V$ an $\operatorname{Ext}^{1}$-universal
object. Then, every direct summand of $V$ is a $\operatorname{Ext}^{1}$-universal
object. 
\end{cor}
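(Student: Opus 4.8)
The plan is to deduce that a direct summand $U$ of an $\operatorname{Ext}^{1}$-universal object $V$ is itself $\operatorname{Ext}^{1}$-universal by invoking the preceding lemma, part (c), applied to the split monomorphism witnessing $U$ as a summand. Write $V\cong U\oplus W$, so that the canonical inclusion $a:U\hookrightarrow V$ is a split monomorphism with $\operatorname{Coker}(a)\cong W$, and $b:V\twoheadrightarrow W$ the associated projection gives a short exact sequence $\eta:\suc[U][V][W][a][b]$. Since $V$ is $\operatorname{Ext}^{1}$-universal, for an arbitrary object $A\in\mathcal{A}$ there is a (canonical) universal extension $\omega:\suc[A][E][V^{(X)}][f][g]$ of $V$ by $A$, where $X:=\Ext[V][1][][A]$.

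First I would form the pullback $(g':E'\to U^{(X)},\,e:E'\to E)$ of $g$ along $a^{(X)}:U^{(X)}\to V^{(X)}$, placing us exactly in the setting of the previous lemma. Then I would apply condition (c) of that lemma: $\omega$ is the canonical universal extension and $a$ is a split monomorphism, so the lemma yields that $\omega\cdot a^{(X)}$ is a universal extension of $U$ by $A$. Hence $U\in\Extu_{\mathcal{A}}(A)$. Since $A$ was arbitrary, $U\in\bigcap_{A\in\mathcal{A}}\Extu_{\mathcal{A}}(A)=\Extu_{\mathcal{A}}$, which is precisely the assertion that $U$ is $\operatorname{Ext}^{1}$-universal.

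There are two small points to verify before invoking the lemma. First, the lemma requires that a universal extension of $V$ by $A$ exists to begin with; this is guaranteed by the hypothesis that $V$ is $\operatorname{Ext}^{1}$-universal, and moreover we may take it to be the \emph{canonical} one as needed for hypothesis (c). Second, the canonical inclusion of a direct summand is genuinely a split monomorphism, with splitting given by the projection; this is immediate from the decomposition $V\cong U\oplus W$. With these in place, the result follows directly.

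The main (and essentially only) obstacle is purely bookkeeping: confirming that the hypotheses of the earlier lemma are met, in particular that we are entitled to use the \emph{canonical} universal extension of $V$ by $A$, so that condition (c) rather than (a) or (b) applies. Once this is observed, no computation is required—the entire content has been front-loaded into the preceding lemma, and the corollary is a one-line application of part (c) ranging over all $A\in\mathcal{A}$.
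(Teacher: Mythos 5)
Your proposal is correct and is exactly the paper's intended argument: the corollary is stated without a separate proof precisely because it is an immediate application of part (c) of the preceding lemma, taking $a:U\to V$ to be the split monomorphism coming from the direct sum decomposition and $\omega$ the canonical universal extension of $V$ by an arbitrary $A$ (which exists since $V$ is $\operatorname{Ext}^{1}$-universal). Your two verification points (existence of the canonical universal extension, and the inclusion being split) are the right ones, and both hold as you say.
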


\section{Co-$\operatorname{Ext}^{1}$-universal torsion groups}

\label{torsion} Throughout this section, $p$ is a prime number and
$\mathcal{T}$ is either $\mathcal{T}_{Z}$ or $\mathcal{T}_{p}$
(see Example \ref{exa:tor-her-classic}). We recall that $\mathcal{T}$
is a Grothendieck category which is not Ab4{*} (see Remark \ref{rem:Tor-her-Gro}
and Examples \ref{exa:tor-her-classic},\ref{exam:no-Ab4*}). Thus,
the abelian category $\mathcal{T}^{op}$ has objects which are not
Ext$^{1}$-universal. In the sequel, we characterize those abelian
groups in $\mathcal{T}$ which are Ext$^{1}$-universal in $\mathcal{T}^{op}$.
For this task, we will study the dual definition of universal extension
(see Definition \ref{def:extension co-universal}) and Ext$^{1}$-universal
object.


\begin{defn}
An abelian group $G$ in $\mathcal{T}$ is said to be \textbf{co-$\operatorname{Ext}^{1}$-universal}
in $\mathcal{T}$ when a universal co-extension of $G$ by any other
abelian group in $\mathcal{T}$ exists in $\mathcal{T}$. 
\end{defn}

We recall that $\mathcal{T}_{Z}$ (resp. $\mathcal{T}_{p}$) is the
torsion class of a hereditary torsion pair in $\Ab$ (see Example
\ref{exa:tor-her-classic}(a)). We will denote by $t:\Ab\to\Ab$ (resp.
$t_{p}:\Ab\to\Ab$) the associated torsion radical. In this case,
the functor $t$ (resp. $t_{p}$) is left exact. Moreover, let $\overline{t}$
denote either $t$ or $t_{p}$ in this section.
\begin{rem}
\label{rem:propertiesoft} Note that for each extension $\eta:\;\suc[][][][f][g]$
in $\Ab$, we obtain an exact sequence $\overline{t}(\eta):\;\overline{t}(N)\overset{\overline{t}(f)}{\hookrightarrow}\overline{t}(M)\overset{\overline{t}(g)}{\rightarrow}\overline{t}(K)$
in $\Ab$, where $\overline{t}(g)$ is not necessarily an epimorphism.
On the other hand, we recall that the product in $\mathcal{T}$ of
a family of abelian groups $(G_{i})_{i\in I}\subseteq\mathcal{T}$
is given by $\overline{t}(\prod_{i\in I}G_{i})$, where $\prod_{i\in I}G_{i}$
is the product of such family in $\Ab$, and the $i$-th canonical
projections $\pi_{i}^{\overline{t}G}:=\pi_{i}^{G}\circ\iota_{\prod_{i\in I}G_{i}}:\overline{t}\left(\prod_{i\in I}G_{i}\right)\rightarrow G_{i}$,
where $\iota_{\prod_{i\in I}G}:\overline{t}(\prod_{i\in I}G_{i})\rightarrow\prod_{i\in I}G_{i}$
is the canonical inclusion. 
\end{rem}

The following lemma establishes the relationship between the universal
co-extensions in $\Ab$ and the universal co-extensions in $\mathcal{T}$. 
\begin{lem}
\label{lem:la canonica de torsion}Let $A,B\in\mathcal{T}$ such that
there is a universal co-extension of $B$ by $A$ in $\mathcal{T}$.
If $\eta:\:\suc[B^{X}][E][A][f][g]$ is the canonical universal co-extension
of $B$ by $A$ in $\Ab$, then $\overline{t}(\eta)$ is a short exact
sequence and it is the canonical universal co-extension of $B$ by
$A$ in $\mathcal{T}$. 
\end{lem}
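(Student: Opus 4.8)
The plan is to reduce the whole statement to a single surjectivity assertion, after which the \emph{canonical} normalization drops out for free. Throughout I use that $\mathcal{T}$ is closed under extensions, so for $A,B\in\mathcal{T}$ one has $\mathrm{Ext}^1_{\mathcal{T}}(A,B)=\mathrm{Ext}^1_{\mathsf{Ab}}(A,B)$; in particular the set $X:=\mathrm{Ext}^1(A,B)$ and a complete set of representatives $\{\eta_i:\ B\hookrightarrow E_i\twoheadrightarrow A\}_{i\in X}$ are the same whether computed in $\mathcal{T}$ or in $\mathsf{Ab}$, and each $E_i$ is torsion. First I would record the shape of $\eta$. Since $\mathsf{Ab}$ is hereditary and $\mathrm{Ab}4^\ast$ (products are exact), the dual of Lemma~\ref{lem:colim}(c) and of Corollary~\ref{cor:extension universal canonica} show that the canonical co-extension $\eta:\ B^{X}\hookrightarrow E\twoheadrightarrow A$ may be taken as the pullback of the product extension $B^{X}\hookrightarrow\prod_{i\in X}E_i\twoheadrightarrow A^{X}$ along the diagonal $\Delta^A_X:A\to A^{X}$, so that $E\cong\{(e_i)\in\prod_iE_i:\ g_i(e_i)\ \text{is independent of}\ i\}$ with $g$ returning the common value, and moreover $\overline{\pi^B_i\cdot\eta}=\overline{\eta_i}$ for every $i\in X$.

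The main point is the surjectivity of $\overline{t}(g)$. Left-exactness of $\overline{t}$ already makes $\overline{t}(\eta):\ \overline{t}(B^{X})\hookrightarrow\overline{t}(E)\to\overline{t}(A)=A$ exact except possibly at $A$, so it remains to see that $\overline{t}(g)$ is onto. Here I would use the hypothesis: pick a universal co-extension $\epsilon:\ \overline{t}(B^{Y})\hookrightarrow D\twoheadrightarrow A$ of $B$ by $A$ in $\mathcal{T}$, so $D\in\mathcal{T}$ is torsion and, by Definition~\ref{def:extension co-universal}(c), the connecting map is surjective; thus for each $i\in X$ there is $\phi_i:\overline{t}(B^{Y})\to B$ with $\overline{\phi_i\cdot\epsilon}=\overline{\eta_i}$. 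The pushout square realizing this equivalence supplies $\psi_i:D\to E_i$ with $g_i\circ\psi_i=q$ (the quotient map of $\epsilon$). Assembling the $\psi_i$ into $(\psi_i):D\to\prod_iE_i$, the relations $g_i\circ\psi_i=q$ give $(\prod_ig_i)\circ(\psi_i)=\Delta^A_X\circ q$, so $(\psi_i)$ factors through the pullback $E$ as some $\widetilde\psi:D\to E$ with $g\circ\widetilde\psi=q$. Since $D$ is torsion, $\widetilde\psi(D)\subseteq\overline{t}(E)$, whence $A=q(D)=g(\widetilde\psi(D))\subseteq\overline{t}(g)\big(\overline{t}(E)\big)$. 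Thus $\overline{t}(g)$ is surjective and $\overline{t}(\eta)$ is a short exact sequence in $\mathcal{T}$.

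It then remains to identify $\overline{t}(\eta)$ as the canonical co-extension in $\mathcal{T}$. The torsion inclusions $\overline{t}(B^{X})\hookrightarrow B^{X}$ and $\overline{t}(E)\hookrightarrow E$, together with $1_A$, form a morphism of short exact sequences $\overline{t}(\eta)\to\eta$ which is the identity on the quotient $A$; by the standard fact that such a morphism exhibits the lower row as a pushout of the upper one along the left-hand map, we get $\overline{\eta}=\overline{\iota\cdot\overline{t}(\eta)}$, where $\iota:\overline{t}(B^{X})\to B^{X}$ is the inclusion. Pushing out further along $\pi^B_i$ and using $\pi^{\overline{t}B}_i=\pi^B_i\circ\iota$ (Remark~\ref{rem:propertiesoft}) together with functoriality of the pushout, I obtain $\overline{\pi^{\overline{t}B}_i\cdot\overline{t}(\eta)}=\overline{\pi^B_i\cdot\eta}=\overline{\eta_i}$ for all $i\in X$. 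Since $\{\overline{\eta_i}\}_{i\in X}$ exhausts $\mathrm{Ext}^1(A,B)$, this says the connecting map $\mathrm{Hom}_{\mathcal{T}}(\overline{t}(B^{X}),B)\to\mathrm{Ext}^1(A,B)$ is surjective, so $\overline{t}(\eta)$ is a universal co-extension in $\mathcal{T}$ by Definition~\ref{def:extension co-universal}(c), and the normalization $\overline{\pi^{\overline{t}B}_i\cdot\overline{t}(\eta)}=\overline{\eta_i}$ is exactly the condition making it the canonical one.

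I expect the surjectivity of $\overline{t}(g)$ to be the only real obstacle: applying a left-exact radical to an epimorphism need not preserve surjectivity, and the derived-functor obstruction living in $R^1\overline{t}(B^{X})$ can be genuinely nonzero. The existence hypothesis is precisely what removes it, by producing an honestly torsion group $D$ surjecting onto $A$ compatibly with all the representatives $\eta_i$, which then maps into the $\mathsf{Ab}$-pullback $E$ and forces $A$ into the image of its torsion part. The one bookkeeping point I would verify carefully is the identification $\mathrm{Ext}^1_{\mathcal{T}}=\mathrm{Ext}^1_{\mathsf{Ab}}$ on objects of $\mathcal{T}$, which legitimizes performing all the pushout computations in $\mathsf{Ab}$ and reading them back in $\mathcal{T}$.
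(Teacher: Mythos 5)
Your proposal is correct and follows essentially the same route as the paper's own proof: both reduce everything to the surjectivity of $\overline{t}(g)$, which is obtained by taking a torsion group supplied by the hypothesis (a middle term of a universal co-extension in $\mathcal{T}$), mapping it into the $\mathsf{Ab}$-pullback $E$ via the universal property, and noting that its image must land in $\overline{t}(E)$. The only cosmetic difference is that the paper uses the middle term $E'$ of the \emph{canonical} universal co-extension in $\mathcal{T}$, so that the same morphism $\gamma'\colon E'\to \overline{t}(E)$ simultaneously yields surjectivity and the equivalence $\eta'\equiv\overline{t}(\eta)$, whereas you use an arbitrary universal co-extension together with Definition \ref{def:extension co-universal}(c) and then recover canonicity afterwards from the normalization $\overline{\pi_i^{\overline{t}B}\cdot\overline{t}(\eta)}=\overline{\eta_i}$.
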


\begin{proof}
Let $H=\left\{ \eta_{i}:\:\suc[B][E_{i}][A][f_{i}][g_{i}]\right\} _{i\in X}$
be a complete set of representatives of $\Ext[A][1][\Ab][B]$. By
the dual of Corollary \ref{cor:extension universal canonica} together
with the Remark \ref{rem:propertiesoft}, we know that the canonical
universal co-extension of $B$ by $A$ in $\mathcal{T}$ is the exact
sequence $\eta':\:\suc[\overline{t}\left(B^{X}\right)][E'][A][f'][g']$
built through the pullback of $\overline{t}\left(\Delta_{X}^{A}\right)$
and $\overline{t}\left(\prod_{i\in X}g_{i}\right)$ (the dual construction
described in Lemma \ref{lem:colim}). Similarly, we know that $\eta$
is built through the pullback of $\Delta_{X}^{A}$ and $\prod_{i\in X}g_{i}$.
Therefore, we have the following commutative diagram.\\
\noindent\fbox{\begin{minipage}[t]{1\columnwidth - 2\fboxsep - 2\fboxrule}%
\[ \begin{tikzpicture}[-,>=to,shorten >=1pt,auto,node distance=2.9cm,main node/.style=,x=2.9cm,y=2.9cm]
\node[main node] (E) at (-.5,1)      {$\overline{t}(\eta):$};
\node[main node] (E') at (-.5,0)      {$ \eta ':$};
\node[main node] (E'') at (-.5,-1)      {$\overline{t}( \prod \eta _i ):$};
\node[main node] (E''') at (-.5,-1)      {$\overline{t}( \prod \eta _i ):$};
   \node[main node] (1) at (0,1)       {$\overline{t}(B^X)$};  
     \node[main node] (2) [right of=1]  {$\overline{t}(E)$};    
   \node[main node] (3) [right of=2]  {$A$};    
   \node[main node] (1') at (0,0)       {$\overline{t}(B^X)$}; 
      \node[main node] (2') [right of=1']  {$E'$};   
    \node[main node] (3') [right of=2']  {$A$};    
   \node[main node] (1'') at (0,-1)         {$\overline{t}(B^X)$}; 
      \node[main node] (2'') [right of=1'']  {$\overline{t}(\prod E_i)$};
       \node[main node] (3'') [right of=2'']  {$\overline{t}(A^X)$}; 
  
   \node[main node] (1''') at (315:2.32cm)  {$B^X$};    
   \node[main node] (2''') [right of=1''']  {$E$};  
     \node[main node] (3''') [right of=2''']  {$A$};    
   \node[main node] (1'''') [below of=1''']  {$B^X$}; 
      \node[main node] (2'''') [right of=1'''']  {$\prod E_i$};  
     \node[main node] (3'''') [right of=2'''']  {$A^X$};    
\node[main node] (E) at (-.5,1)      {$\overline{t}( \eta):$};
\node[main node] (E') at (-.5,0)      {$ \eta ':$};
\node[main node] (E'') at (-.5,-1)      {$\overline{t}( \prod \eta _i ):$};
\node[main node] (E''') [right= .5cm of 3''']      {$:  \eta $};
\node[main node] (E'''') [right= .5cm of 3'''']      {$:  \prod \eta _i $};
\draw[right hook->, thin]   (1)  to [below] node  {$\scriptstyle \overline{t}(f)$}  (2);
\draw[->, thin]   (2)  to [below] node  {$\scriptstyle \overline{t}(g)$}  (3);
\draw[right hook->, thin]   (1')  to node  {$\scriptstyle f'$}  (2');
\draw[->>, thin]   (2')  to  node  {$\scriptstyle g'$}  (3');
\draw[right hook->, thin]   (1'')  to node  {$$}  (2'');
\draw[->, thin]   (2'')  to  node  {$$}  (3'');
\draw[right hook->, thin]   (1''')  to [below] node  {$\scriptstyle f$}  (2''');
\draw[->>, thin]   (2''')  to [below] node  {$\scriptstyle g$}  (3''');  
\draw[right hook->, thin]   (1'''')  to [below] node  {$\scriptstyle \prod f_i$}  (2'''');
\draw[->>, thin]   (2'''')  to [below] node  {$\scriptstyle \prod g_i$}  (3'''');
\draw[-, double]   (1)  to node  {$$}  (1');
\draw[-, double]   (1')  to  node  {$$}  (1'');
\draw[right hook->, thin]   (1')  to [] node  {$$}  (1''');
\draw[right hook->, thin]   (1'')  to [below left] node  {$\scriptstyle \iota _{B^X}$}  (1'''');
\draw[-, double]   (1''')  to  node   {$$}  (1'''');
\draw[right hook->, thin]   (1)  to  node   {$\scriptstyle \iota _{B^X}$}  (1''');
\draw[->, dashed]   (2')  to node  {$\scriptstyle \gamma '$}  (2);
\draw[->, dashed]   (2')  to  node  {$\scriptstyle \gamma$}  (2''');
\draw[right hook->, thin]   (2')  to [left] node  {$\scriptstyle \Delta ''$}  (2'');
\draw[right hook->, thin]   (2'')  to [below left] node  {$\scriptstyle \iota _{\prod E}$}  (2'''');
\draw[right hook->, thin]   (2''')  to  node   {$\scriptstyle \Delta '$}  (2'''');
\draw[right hook->, thin]   (2)  to  node   {$\scriptstyle \iota _E$}  (2''');
\draw[-, double]   (3')  to node          {$$}          (3);
\draw[-, double]   (3')  to  node  {$$}            (3''');
\draw[right hook->, thin]   (3')  to [left] node    {$\scriptstyle \overline{t}(\Delta ^{A}_X)$}          (3'');
\draw[right hook->, thin]   (3'')  to [below left] node   {$\scriptstyle \iota _{A ^{X}}$}  (3'''');
\draw[right hook->, thin]   (3''')  to  node   {$\scriptstyle \Delta^{A}_X$}  (3'''');
\draw[right hook->, thin]   (3)  to  node   {$$}  (3''');
    \end{tikzpicture} 
\]
\end{minipage}} \\
Let us prove that $\overline{t}(g)$ is surjective and that $\eta'$
is equivalent to $\overline{t}\left(\eta\right)$. For this, observe
that $\Delta_{X}^{A}\circ g'=(\prod_{i\in X}g_{i})\circ\iota_{\prod_{i\in X}E_{i}}\circ\Delta''$.
And hence, by the universal property of pullbacks applied to $\Delta_{X}^{A}$
and $\prod_{i\in X}g_{i}$, there is $\gamma\in\Hom[\Ab][E'][E]$
such that $\Delta'\circ\gamma=\iota_{\prod_{i\in X}E_{i}}\circ\Delta''$
and $g\circ\gamma=g'$. Moreover, since $\Delta'\circ\gamma\circ f'=\Delta'\circ f\circ\iota_{B^{X}}$
and $\Delta'$ is monic, we have that $\gamma\circ f'=f\circ\iota_{B^{X}}$.
Now, since $E'\in\mathcal{T}$, there is $\gamma'\in\Hom[\Ab][E'][t(E)]$
such that $\gamma=\iota_{E}\circ\gamma'$. Observe that $\gamma'\circ f'=\overline{t}(f)$
and $\overline{t}(g)\circ\gamma'=g'$ since $\iota_{E}\circ\gamma'\circ f'=\iota_{E}\circ\overline{t}(f)$,
$\Delta_{X}^{A}\circ\overline{t}(g)\circ\gamma'=\Delta_{X}^{A}\circ g'$
and $\iota_{E}$ and $\Delta_{X}^{A}$ are monic. Therefore, $\overline{t}(g)$
is surjective and that $\eta'$ is equivalent to $\overline{t}\left(\eta\right)$. 
\end{proof}
\begin{prop}
\label{prop:torsion-cotorsionesco-ext-universal}Let $M\in\T$ be
a cotorsion group (i.e. $\Ext[\mathbb{Q}/\mathbb{Z}][1][\Ab][M]=0$).
Then, $M$ is co-$\operatorname{Ext}^{1}$-universal in $\mathcal{T}$. 
\end{prop}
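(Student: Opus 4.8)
The plan is to verify the hypothesis of the dual of Corollary~\ref{cor:caract ext univ}. Since $\T$ is a Grothendieck category it is $\operatorname{Ext}$-small, so for every $A\in\T$ the class $X:=\operatorname{Ext}^{1}_{\T}(A,M)$ is a set; by that (dual) corollary it then suffices to prove that the canonical map
\[
\Phi:\operatorname{Ext}^{1}_{\T}(A,M^{X})\longrightarrow\operatorname{Ext}^{1}_{\T}(A,M)^{X}
\]
is bijective, where $M^{X}$ is the product \emph{computed in $\T$}. I will in fact establish bijectivity of $\Phi$ for \emph{every} set $X$. Two standing reductions are used throughout. First, as $\T$ is an abelian exact subcategory of $\Ab$ closed under extensions (Remark~\ref{rem:Tor-her-Gro}), the bifunctors $\operatorname{Ext}^{1}_{\T}$ and $\operatorname{Ext}^{1}_{\Ab}$ coincide on $\T$. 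Second, by Remark~\ref{rem:propertiesoft} the product in $\T$ is $M^{X}=\overline{t}\!\left(\prod_{X}M\right)$, where $\prod_{X}M$ is the product in $\Ab$, and the $i$-th projection of $\T$ is $\pi^{\overline{t}}_{i}=\pi_{i}\circ\iota$, with $\iota:\overline{t}(\prod_{X}M)\hookrightarrow\prod_{X}M$ the canonical inclusion.

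Set $C:=(\prod_{X}M)/\overline{t}(\prod_{X}M)$, a $\overline{t}$-torsionfree group, i.e. $C\in\F$. Because the projections factor through $\iota$, a direct check shows that $\Phi$ decomposes as
\[
\Phi=\Phi_{\Ab}\circ\operatorname{Ext}^{1}_{\Ab}(A,\iota),
\]
where $\Phi_{\Ab}:\operatorname{Ext}^{1}_{\Ab}(A,\prod_{X}M)\to\operatorname{Ext}^{1}_{\Ab}(A,M)^{X}$ is the corresponding map in $\Ab$. Now $\Ab$ is $\mathrm{Ab}4^{\ast}$, so by the dual of Theorem~\ref{thm:Ab4 vs ext} the map $\Phi_{\Ab}$ is bijective; hence everything reduces to showing that $\operatorname{Ext}^{1}_{\Ab}(A,\iota)$ is an isomorphism. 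Applying $\operatorname{Hom}_{\Ab}(A,-)$ to the short exact sequence $0\to\overline{t}(\prod_{X}M)\xrightarrow{\iota}\prod_{X}M\to C\to 0$ and using that $\operatorname{Hom}_{\Ab}(A,C)=0$ (as $A\in\T$ and $C\in\F$), one obtains an exact sequence
\[
0\to\operatorname{Ext}^{1}_{\Ab}\big(A,\overline{t}(\textstyle\prod_{X}M)\big)\xrightarrow{\,\operatorname{Ext}^{1}_{\Ab}(A,\iota)\,}\operatorname{Ext}^{1}_{\Ab}\big(A,\textstyle\prod_{X}M\big)\to\operatorname{Ext}^{1}_{\Ab}(A,C).
\]
Thus $\operatorname{Ext}^{1}_{\Ab}(A,\iota)$ is an isomorphism provided $\operatorname{Ext}^{1}_{\Ab}(A,C)=0$.

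The remaining, and genuinely crucial, step is to show that $\operatorname{Ext}^{1}_{\Ab}(A,C)=0$ for every torsion $A$; this is where the cotorsion hypothesis on $M$ enters. First, since $\Ab$ is $\mathrm{Ab}4^{\ast}$, Theorem~\ref{thm:Ab4 vs ext} gives $\operatorname{Ext}^{1}_{\Ab}(\mathbb{Q}/\mathbb{Z},\prod_{X}M)\cong\prod_{X}\operatorname{Ext}^{1}_{\Ab}(\mathbb{Q}/\mathbb{Z},M)=0$, so $\prod_{X}M$ is cotorsion. Applying $\operatorname{Hom}_{\Ab}(\mathbb{Q}/\mathbb{Z},-)$ to $0\to\overline{t}(\prod_{X}M)\to\prod_{X}M\to C\to 0$ and using that $\Ab$ is hereditary (so $\operatorname{Ext}^{2}_{\Ab}=0$), we find that $\operatorname{Ext}^{1}_{\Ab}(\mathbb{Q}/\mathbb{Z},C)$ is a quotient of $\operatorname{Ext}^{1}_{\Ab}(\mathbb{Q}/\mathbb{Z},\prod_{X}M)=0$, whence $C$ is cotorsion as well. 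Finally, for an arbitrary torsion group $A$ I would take the injective envelope $A\hookrightarrow D$, where $D$ is a coproduct of Pr\"ufer groups and $D/A$ is torsion; using $\operatorname{Ext}^{2}_{\Ab}=0$ together with the additivity $\operatorname{Ext}^{1}_{\Ab}(D,C)\cong\prod\operatorname{Ext}^{1}_{\Ab}(\mathbb{Z}(p^{\infty}),C)$, and the fact that $\operatorname{Ext}^{1}_{\Ab}(\mathbb{Q}/\mathbb{Z},C)=\prod_{p}\operatorname{Ext}^{1}_{\Ab}(\mathbb{Z}(p^{\infty}),C)=0$ forces each factor to vanish, I conclude that $\operatorname{Ext}^{1}_{\Ab}(A,C)$ is a quotient of $0$. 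Hence $\Phi$ is bijective for every $X$, and the dual of Corollary~\ref{cor:caract ext univ} provides a universal co-extension of $M$ by $A$ in $\T$; as $A$ was arbitrary, $M$ is co-$\operatorname{Ext}^{1}$-universal in $\T$. The main obstacle is precisely this last step: recognising that the paper's definition of cotorsion ($\operatorname{Ext}^{1}_{\Ab}(\mathbb{Q}/\mathbb{Z},-)=0$) is exactly the vanishing needed to kill $\operatorname{Ext}^{1}_{\Ab}(-,C)$ on all torsion objects; everything else is formal diagram chasing.
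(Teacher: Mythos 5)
Your proposal is correct, but it follows a genuinely different route from the paper's proof. The paper argues structurally: by the Baer--Fomin theorem a torsion cotorsion group decomposes as $M=D\oplus B$ with $D$ injective and $B$ bounded; injectives are co-$\operatorname{Ext}^{1}$-universal by (the dual of) Remark \ref{rem:extensiones universales}(a), the class of co-$\operatorname{Ext}^{1}$-universal objects is closed under finite coproducts by (the dual of) Corollary \ref{cor:coproducto de universales}(a), and for bounded $B$ a universal co-extension of $B$ by $A$ taken in $\Ab$ already lives in $\mathcal{T}$, since $B^{X}$ is bounded (hence torsion) and $\mathcal{T}$ is closed under extensions in $\Ab$. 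You instead verify the criterion of the dual of Corollary \ref{cor:caract ext univ} head-on: you factor $\Phi_{\mathcal{T}}$ as $\Phi_{\Ab}\circ\operatorname{Ext}^{1}_{\Ab}(A,\iota)$ with $\Phi_{\Ab}$ bijective by the $\mathrm{Ab}4^{\ast}$ condition of $\Ab$ (Theorem \ref{thm:Ab4 vs ext}), reduce everything to the vanishing of $\operatorname{Ext}^{1}_{\Ab}(A,C)$ for $C=(\prod_{X}M)/\overline{t}(\prod_{X}M)$, and get that vanishing from the cotorsion hypothesis: products of cotorsion groups are cotorsion, quotients of cotorsion groups are cotorsion since $\Ab$ is hereditary, and $\operatorname{Ext}^{1}_{\Ab}(\mathbb{Q}/\mathbb{Z},C)=0$ kills $\operatorname{Ext}^{1}_{\Ab}(A,C)$ for every torsion $A$ via the injective envelope of $A$ (a coproduct of Pr\"ufer groups) and the isomorphism of $\operatorname{Ext}^{1}_{\Ab}(-,C)$ on coproducts. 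All steps check, including the point that for $\mathcal{T}=\mathcal{T}_{p}$ the group $C$ is still $\overline{t}$-torsionfree, which is all that is needed for $\operatorname{Hom}_{\Ab}(A,C)=0$. What your route buys: it avoids the Baer--Fomin structure theorem, makes transparent that cotorsion is exactly the hypothesis needed, and your factorization of $\Phi_{\mathcal{T}}$ is precisely the content of Proposition \ref{lem:caracterizacion_existencia_coextuniv}, which the paper proves later and independently --- so your argument anticipates that characterization (and, since your $C$ ends up divisible, also foreshadows Proposition \ref{prop:VX cociente torsion es divisible}). What the paper's route buys: given the classical structure theory it is considerably shorter, and it exercises the closure properties of co-$\operatorname{Ext}^{1}$-universal objects developed in Section \ref{Ab3} rather than re-deriving the Ext-criterion inline.
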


\begin{proof}
Since $M$ is a torsion cotorsion group, it follows from the Baer-Fomin
Theorem that $M=D\oplus B$, where $D$ is an injective abelian group
and $B$ is a bounded abelian group (see \cite[Chapter 15, Theorem 1.6]{fuchs2015abelian}).
By Remark \ref{rem:extensiones universales}(a), every injective torsion
group is co-$\operatorname{Ext}^{1}$-universal in $\mathcal{T}$. Hence,
by Corollary \ref{cor:coproducto de universales}(a), it is enough
to show that $B$ is co-$\operatorname{Ext}^{1}$-universal in $\mathcal{T}$.
Let $A\in\mathcal{T}$. Consider a universal co-extension of $B$
by $A$ in $\Ab$, say $\eta:\:\suc[B^{X}][E][A]$. Observe that $n\cdot B^{X}=0$, for some nonzero integer $n$.
Therefore, $B^{X},A\in\mathcal{T}$, and hence $E\in\mathcal{T}$
because $\mathcal{T}$ is closed under extensions in $\Ab$. Therefore,
$\eta$ is a universal co-extension of $B$ by $A$ in $\mathcal{T}$. 
\end{proof}
\begin{prop}
\label{lem:caracterizacion_existencia_coextuniv} The following statements
are equivalent for $T,S\in\T$:
\begin{enumerate}
\item there is a universal co-extension of $T$ by $S$ in $\mathcal{T}$;
\item $\Ext[S][1][\mathbb{\Ab}][\iota_{T^{X}}]:\Ext[S][1][\mathbb{\Ab}][\overline{t}(T^{X})]\rightarrow\Ext[S][1][\mathbb{\Ab}][T^{X}]$
is an isomorphism, for every set $X$; 
\item $\Ext[S][1][\mathbb{\Ab}][T^{X}/\overline{t}(T^{X})]=0$, for every
set $X$. 
\end{enumerate}
\end{prop}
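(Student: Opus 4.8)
The plan is to establish $(b)\Leftrightarrow(c)$ directly from the long exact sequence of $\operatorname{Ext}$ in $\Ab$, and then to deduce $(a)\Leftrightarrow(b)$ by comparing the canonical map $\Phi$ computed in $\mathcal{T}$ with the one computed in $\Ab$.

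For $(b)\Leftrightarrow(c)$, fix a set $X$ and apply the functor $\Ext[S][1][\Ab][-]$ to the short exact sequence $\overline{t}(T^{X})\overset{\iota_{T^{X}}}{\hookrightarrow}T^{X}\twoheadrightarrow T^{X}/\overline{t}(T^{X})$ in $\Ab$. Since $\Ab$ is hereditary, $\Ext[S][2][\Ab][-]=0$, so this produces the exact sequence
\[
\Hom[\Ab][S][T^{X}/\overline{t}(T^{X})]\to\Ext[S][1][\Ab][\overline{t}(T^{X})]\overset{\Ext[S][1][\Ab][\iota_{T^{X}}]}{\longrightarrow}\Ext[S][1][\Ab][T^{X}]\to\Ext[S][1][\Ab][T^{X}/\overline{t}(T^{X})]\to0.
\]
As $S\in\mathcal{T}$ while $T^{X}/\overline{t}(T^{X})$ is torsionfree, we have $\Hom[\Ab][S][T^{X}/\overline{t}(T^{X})]=0$; hence $\Ext[S][1][\Ab][\iota_{T^{X}}]$ is always injective, and it is surjective precisely when $\Ext[S][1][\Ab][T^{X}/\overline{t}(T^{X})]=0$. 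This is exactly the equivalence $(b)\Leftrightarrow(c)$.

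For $(a)\Leftrightarrow(b)$, I first record two identifications. Since $\mathcal{T}$ is closed under extensions in $\Ab$ (Remark \ref{rem:Tor-her-Gro}), the inclusion functor induces natural isomorphisms $\Ext[S][1][\T][G]\cong\Ext[S][1][\Ab][G]$ for every $G\in\mathcal{T}$; and by Remark \ref{rem:propertiesoft} the product of the constant family $(T)_{x\in X}$ in $\mathcal{T}$ is $\overline{t}(T^{X})$, whose $x$-th projection is $\pi_{x}^{\overline{t}T}=\pi_{x}^{T}\circ\iota_{T^{X}}$. Using the functoriality of the pushout, the identity $\overline{\pi_{x}^{\overline{t}T}\cdot\epsilon}=\overline{\pi_{x}^{T}\cdot(\iota_{T^{X}}\cdot\epsilon)}$ shows that, under the above identifications, the canonical map in $\mathcal{T}$ factors as
\[
\Phi_{\T}\colon\Ext[S][1][\Ab][\overline{t}(T^{X})]\overset{\Ext[S][1][\Ab][\iota_{T^{X}}]}{\longrightarrow}\Ext[S][1][\Ab][T^{X}]\overset{\Phi_{\Ab}}{\longrightarrow}\Ext[S][1][\Ab][T]^{X}.
\]
Now $\Ab$ satisfies Ab4{*}, so by Theorem \ref{thm:Ab4 vs ext} the map $\Phi_{\Ab}$ is bijective; hence $\Phi_{\T}$ is bijective if and only if $\Ext[S][1][\Ab][\iota_{T^{X}}]$ is an isomorphism.

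Finally I feed this into the dual characterization of universal co-extensions. The dual of Lemma \ref{lem:phi es biyectiva} shows that $(a)$ forces $\Phi_{\T}$ to be bijective for every set $X$, which by the factorization above is precisely $(b)$. Conversely, $\Ext[S][1][\Ab][T]$ is a set because $\Ab$ is $\operatorname{Ext}$-small; so, assuming $(b)$ and taking $X:=\Ext[S][1][\T][T]$, the dual of Corollary \ref{cor:caract ext univ} produces a universal co-extension of $T$ by $S$ in $\mathcal{T}$, giving $(a)$. The main point to verify carefully is the factorization of $\Phi_{\T}$ through $\Phi_{\Ab}$: one must confirm, via Remark \ref{rem:propertiesoft}, that the projections of the $\mathcal{T}$-product are the composites $\pi_{x}^{T}\circ\iota_{T^{X}}$ and that pushing out along them coincides with first pushing out along $\iota_{T^{X}}$ and then applying $\Phi_{\Ab}$; the remaining steps are bookkeeping with the identifications $\Ext[\cdot][1][\T][\cdot]\cong\Ext[\cdot][1][\Ab][\cdot]$.
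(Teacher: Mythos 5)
Your proposal is correct and takes essentially the same approach as the paper: the equivalence $(b)\Leftrightarrow(c)$ via the long exact sequence in $\Ab$ (using hereditariness and $\operatorname{Hom}_{\Ab}(S,T^{X}/\overline{t}(T^{X}))=0$, which also yields the automatic injectivity of $\operatorname{Ext}^{1}_{\Ab}(S,\iota_{T^{X}})$), and then $(a)\Leftrightarrow(b)$ via the factorization $\Phi_{\mathcal{T}}=\Phi_{\Ab}\circ\operatorname{Ext}^{1}_{\Ab}(S,\iota_{T^{X}})$ together with the duals of Lemma \ref{lem:phi es biyectiva} and Corollary \ref{cor:caract ext univ}. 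The only difference is organizational (you prove $(b)\Leftrightarrow(c)$ first, while the paper runs the three equivalences in parallel for each fixed $X$), which does not change the substance.
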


\begin{proof}
Let $X$ be a set and consider the  exact sequence $\suc[\overline{t}(T^{X})][T^{X}][T^{X}/\overline{t}(T^{X})][\iota_{T^{X}}][\pi_{T^{X}}]$.
Applying $\Hom[\mathbb{\Ab}][S][-]$, we get the exact sequence 
\[
0\rightarrow{}^{1}(S,t(T^{X}))\overset{^{1}(S,\iota_{T^{X}})}{\longrightarrow}{}^{1}(S,T^{X})\overset{}{\longrightarrow}{}^{1}(S,T^{X}/\overline{t}(T^{X}))\rightarrow0
\]
where the last term is zero because $\Ab$ is hereditary, and the
first term is zero because $S\in\mathcal{T}$ and $T^{X}/\overline{t}(T^{X})$
is torsion-free (with respect to $\mathcal{T}$). Now, observe $\Ext[S][1][\Ab][\overline{t}(T^{X})]=\Ext[S][1][\mathcal{T}][\overline{t}(T^{X})]$,
$\Ext[S][1][\Ab][T]^{X}=\Ext[S][1][\mathcal{T}][T]^{X}$, and that
the morphism 
\[
\Phi_{\mathcal{T}}:\Ext[S][1][\mathcal{T}][\overline{t}(T^{X})]\rightarrow\Ext[S][1][\mathcal{T}][T]^{X}\text{, }\overline{\eta}\mapsto\left(\overline{\left(\pi_{i}\circ\iota_{T^{X}}\right)\cdot\eta}\right)_{i\in X}\text{,}
\]
 is factored as $\Phi_{\mathcal{T}}=\Phi\circ\Ext[S][1][\Ab][\iota_{T^{X}}]$,
where $\Phi:\Ext[S][1][\Ab][T^{X}]\rightarrow\Ext[S][1][\Ab][T]^{X}$
is the isomorphism defined as $\overline{\eta}\mapsto\left(\overline{\pi_{i}\cdot\eta}\right)_{i\in X}$
(see Theorem \ref{thm:Ab4 vs ext}). Hence, the following statements
are equivalent: (i) $\Phi_{\mathcal{T}}$ is an isomorphism, (ii)
$\Ext[S][1][\mathcal{T}][\iota_{T^{X}}]$ is an isomorphism, and (iii)
$\Ext[S][1][\Ab][T^{X}/\overline{t}(T^{X})]=0$. Now, it follows from
the dual results of Lemma \ref{lem:phi es biyectiva} and Corollary
\ref{cor:caract ext univ} that (i) holds for every set $X$ if and
only if (a) holds true. Therefore, statements (a), (b), and (c) are
equivalent.
\end{proof}
\begin{prop}
\label{prop:VX cociente torsion es divisible} The following statements
are equivalent for $T\in\T$:
\begin{enumerate}
\item $T$ is co-$\operatorname{Ext}^{1}$-universal in $\mathcal{T}$;
\item $T^{X}/\overline{t}(T^{X})$ is injective in $\Ab$, for every set
$X$;
\item every quotient of $T$ is co-$\operatorname{Ext}^{1}$-universal in
$\mathcal{T}$. 
\end{enumerate}
\end{prop}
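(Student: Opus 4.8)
The plan is to route everything through Proposition \ref{lem:caracterizacion_existencia_coextuniv}, which already translates the existence of a universal co-extension of $T$ by a fixed $S$ into the vanishing $\Ext[S][1][\Ab][T^X/\overline{t}(T^X)]=0$ for every set $X$. Writing $G_X := T^X/\overline{t}(T^X)$, statement (a) then unfolds as: $\Ext[S][1][\Ab][G_X]=0$ for every $S\in\mathcal{T}$ and every set $X$. The whole proof amounts to comparing this ``Ext vanishes against every torsion test object'' condition with genuine injectivity (that is, divisibility) of each $G_X$, and then transferring that divisibility along quotients.

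First I would prove (a)$\Leftrightarrow$(b). The implication (b)$\Rightarrow$(a) is immediate: an injective, hence divisible, group $G_X$ satisfies $\Ext[S][1][\Ab][G_X]=0$ for \emph{every} group $S$, in particular for $S\in\mathcal{T}$, so Proposition \ref{lem:caracterizacion_existencia_coextuniv} yields (a). For (a)$\Rightarrow$(b), fix a set $X$ and test against $S=\mathbb{Z}(p^{k})\in\mathcal{T}$: the standard isomorphism $\Ext[\mathbb{Z}(p^k)][1][\Ab][G_X]\cong G_X/p^{k}G_X$ forces $G_X$ to be $p$-divisible. When $\mathcal{T}=\mathcal{T}_{Z}$ the same computation with $S=\mathbb{Z}(q^{k})$, for every prime $q$, gives full divisibility. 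When $\mathcal{T}=\mathcal{T}_{p}$ the torsion test objects only see the prime $p$, so here I would invoke the structural fact that multiplication by any prime $q\neq p$ is an automorphism of a $p$-group; hence $T$, and therefore the product $T^{X}$, is automatically $q$-divisible for every $q\neq p$, and so is the quotient $G_X$. Combined with the $p$-divisibility already obtained, $G_X$ is divisible, i.e. injective in $\Ab$, for every $X$.

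For (a)$\Leftrightarrow$(c), the implication (c)$\Rightarrow$(a) is trivial, since $T$ is a quotient of itself. For (a)$\Rightarrow$(c), let $T\twoheadrightarrow T'$ be any quotient; as $\mathcal{T}$ is a torsion class it is closed under quotients, so $T'\in\mathcal{T}$. Exactness of products in $\Ab$ (the $\mathrm{Ab}4^{\ast}$ property) makes $T^{X}\twoheadrightarrow T'^{X}$ an epimorphism, and functoriality of the torsion coradical $(-)/\overline{t}(-)$ gives a commutative square from which one reads off that the induced map $T^{X}/\overline{t}(T^{X})\twoheadrightarrow T'^{X}/\overline{t}(T'^{X})$ is again an epimorphism (the composite $T^{X}\to T'^{X}/\overline{t}(T'^{X})$ is surjective). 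Assuming (a), equivalently (b), the source $T^{X}/\overline{t}(T^{X})$ is divisible for every $X$; a quotient of a divisible group is divisible, so $T'^{X}/\overline{t}(T'^{X})$ is divisible, hence injective, for every $X$. Applying (b)$\Rightarrow$(a) to $T'$ then shows $T'$ is co-$\operatorname{Ext}^{1}$-universal in $\mathcal{T}$.

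The main obstacle is the reverse step inside (a)$\Rightarrow$(b): deducing genuine all-prime divisibility of $G_X$ from Ext-vanishing tested \emph{only} against torsion groups. For $\mathcal{T}_{Z}$ this is routine because the cyclic groups $\mathbb{Z}(q^{k})$ probe every prime. For $\mathcal{T}_{p}$ the torsion objects are blind to the primes $q\neq p$, and the argument closes only because of the $p$-group automorphism phenomenon, which supplies $q$-divisibility for free. Everything else is bookkeeping with Proposition \ref{lem:caracterizacion_existencia_coextuniv} and the elementary stability of divisibility under products and quotients.
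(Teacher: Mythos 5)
Your proposal is correct and follows essentially the same route as the paper: both reduce (a) to the Ext-vanishing criterion of Proposition \ref{lem:caracterizacion_existencia_coextuniv}, deduce divisibility of $G_X:=T^{X}/\overline{t}(T^{X})$ from the isomorphism $\mathrm{Ext}^{1}_{\Ab}(\mathbb{Z}(n),G_X)\cong G_X/nG_X$, and obtain (c) from (b) by passing the surjection $T^{X}\twoheadrightarrow Q^{X}$ to the torsion-free quotients and using that quotients of injectives are injective in $\Ab$. If anything, your handling of the case $\mathcal{T}=\mathcal{T}_{p}$ --- where the torsion test objects only yield $p$-divisibility, and $q$-divisibility for $q\neq p$ must come from multiplication by $q$ being an automorphism of a $p$-group --- is more explicit than the paper's, which simply asserts $nG_X=G_X$ for all $n>0$ without separating the two cases.
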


\begin{proof}
Observe that $(c)\Rightarrow(a)$ is trivial and that $(b)\Rightarrow(a)$
follows straightforward from Lemma \ref{lem:caracterizacion_existencia_coextuniv}(a,c).
For the proof of $(a)\Rightarrow(b)$ recall that, for every abelian
group $G$ and every positive integer $n$, we have that $G/nG\cong\Ext[\mathbb{Z}(n)][1][\Ab][G]$
(see \cite[p.267]{fuchs2015abelian}). Therefore, if $T$ is co-$\operatorname{Ext}^{1}$-universal
in $\mathcal{T}$ and $G=T^{X}/\overline{t}(T^{X})$ for a set $X$,
then it follows from Lemma \ref{lem:caracterizacion_existencia_coextuniv}(a,c)
that $nG=G$ for all $n>0$. And thus, $G$ is injective (see \cite[Chapter 4, Theorem 2.6]{fuchs2015abelian}).
It remains to prove $(a),(b)\Rightarrow(c)$. Let $Q$ be a quotient
of $T$ and $T$ be co-$\operatorname{Ext}^{1}$-universal in $\mathcal{T}_{Z}$.
Observe that, for every set $X$, $Q^{X}/\overline{t}(Q^{X})$ is
a quotient of $T^{X}/\overline{t}(T^{X})$. Then, it follows from
(b) that $Q^{X}/\overline{t}(Q^{X})$ injective since quotients of
injective groups are also injective in $\Ab$. And hence, $Q$ is
co-$\operatorname{Ext}^{1}$-universal in $\mathcal{T}$.
\end{proof}
\begin{cor}
\label{exa:contraejemplo }Let $p>0$ be a prime number. Then, $\bigoplus_{n\in\mathbb{N}}\mathbb{Z}(p^{n})$
is not co-$\operatorname{Ext}^{1}$-universal in $\mathcal{T}_{Z}$ (resp.
$\mathcal{T}_{p}$). 
\end{cor}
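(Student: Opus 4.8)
The plan is to deduce everything from Proposition \ref{prop:VX cociente torsion es divisible}, which says that a torsion group $T\in\T$ is co-$\operatorname{Ext}^1$-universal in $\mathcal{T}$ if and only if $T^X/\overline{t}(T^X)$ is injective (equivalently, divisible) in $\Ab$ for \emph{every} set $X$. Hence, to show that $T:=\bigoplus_{n\in\mathbb{N}}\mathbb{Z}(p^n)$ is \emph{not} co-$\operatorname{Ext}^1$-universal, it suffices to exhibit a single set $X$ for which $G:=T^X/\overline{t}(T^X)$ fails to be divisible, say $pG\neq G$. I first note that the argument is uniform across $\mathcal{T}_Z$ and $\mathcal{T}_p$: since $T$ is a $p$-group, every torsion element of the power $T^X$ has $p$-power order, so $t(T^X)=t_p(T^X)$ and the quotient $G$ is literally the same group in both settings. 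Thus it is enough to treat $\overline{t}$ as the torsion radical and produce one bad $X$.

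Second, I take $X=\mathbb{N}$ and build an explicit witness in $G$. Writing $e_n$ for a generator of the $n$-th summand $\mathbb{Z}(p^n)\subseteq T$, consider the element $a=(e_k)_{k\in\mathbb{N}}\in T^{\mathbb{N}}$ whose $k$-th coordinate is $e_k$. Because the orders $o(e_k)=p^k$ are unbounded, the equation $ma=0$ would force $p^k\mid m$ for all $k$, so no nonzero integer annihilates $a$; therefore $a\notin\overline{t}(T^{\mathbb{N}})$ and its class $\bar a\in G$ is nonzero.

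Third comes the core computation, which I expect to be the only delicate point: I claim $\bar a\notin pG$. Assuming $\bar a=p\bar b$ for some $b=(b_k)_k\in T^{\mathbb{N}}$, the difference $a-pb$ lies in $\overline{t}(T^{\mathbb{N}})$, so there is $N\geq 0$ with $p^N(a-pb)=0$, i.e. $p^Ne_k=p^{N+1}b_k$ in $T$ for every $k$. Projecting onto the $k$-th summand $\mathbb{Z}(p^k)$ and writing $\pi_k(b_k)=c\,e_k$, this reads $p^N\equiv p^{N+1}c\pmod{p^k}$, i.e. $p^{k-N}\mid(1-pc)$. Choosing any $k>N$ forces $p\mid(1-pc)$, which is impossible since $1-pc$ is a unit modulo $p$. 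This contradiction gives $pG\neq G$, so $G$ is not divisible, and Proposition \ref{prop:VX cociente torsion es divisible} yields that $T$ is not co-$\operatorname{Ext}^1$-universal. Everything outside this coordinatewise divisibility check is bookkeeping; equivalently, one could frame the same obstruction through Proposition \ref{lem:caracterizacion_existencia_coextuniv}(c), observing that $\Ext[\mathbb{Z}(p)][1][\Ab][G]\cong G/pG\neq 0$.
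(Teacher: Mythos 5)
Your proof is correct and takes essentially the same route as the paper: both reduce via Proposition \ref{prop:VX cociente torsion es divisible}(a,b) to showing that $T^{\mathbb{N}}/\overline{t}(T^{\mathbb{N}})$ is not divisible, using the same witness element (coordinatewise the generators of the $\mathbb{Z}(p^{n})$) and the same mod-$p^{k}$ contradiction for $k>N$. The only cosmetic difference is that the paper first passes to the quotient $P=\prod_{n\in\mathbb{N}}\mathbb{Z}(p^{n})$ of $T^{\mathbb{N}}$ and computes there, whereas you work directly in $T^{\mathbb{N}}$ by projecting onto the summands; your observation that $t$ and $t_{p}$ agree on powers of a $p$-group makes the ``resp.\ $\mathcal{T}_{p}$'' case explicit, which the paper leaves implicit.
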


\begin{proof}
Consider $R:=\bigoplus_{n\in\mathbb{N}}\mathbb{Z}(p^{n})$. By Proposition
\ref{prop:VX cociente torsion es divisible}(a,b), it is enough to
prove that $R^{\mathbb{N}}/t(R^{\mathbb{N}})$ is not injective. For
this, observe that $P:=\prod_{n\in\mathbb{N}}\mathbb{Z}(p^{n})$ is
a quotient of $R^{\mathbb{N}}$. And thus, $P/t(P)$ is a quotient
of $R^{\mathbb{N}}/t(R^{\mathbb{N}})$. Therefore, we only need to
prove that $P/t(P)$ is not divisible. Let $x:=(x_{n})_{n\in\mathbb{N}}\in\prod_{n\in\mathbb{N}}\mathbb{Z}(p^{n})$
with $x_{n}=1+p^{n}\mathbb{Z}$, for all $n\in\mathbb{N}$. If we assume
that there is $\alpha:=(\alpha_{n})_{n\geq1}\in\prod_{n\in\mathbb{N}}\mathbb{Z}(p^{n})$
such that $x+t(P)=p\alpha+t(P)$, then there is $m>1$ such that $p^{m}(x-p\alpha)=0$,
and thus, $p^{n}$ divides $p^{m}(1-p\alpha_{n})$, for all $n\geq1$.
This cannot be satisfied for $n>m$. Therefore, $P$ is not divisible. 
\end{proof}
\begin{fact}
\label{fact:basic}Let $p$ be a prime number. Recall that every $A\in\mathcal{T}_{p}$
has a \textbf{$p$-basic subgroup} $B$ (see \cite[Chapter 5, Theorem 5.2]{fuchs2015abelian}).
That is, (i) $B$ is a coproduct of cyclic groups, (ii) $p^{k}B=B\cap p^{k}A$,
for all $k>0$, and (iii) $p\left(A/B\right)=A/B$. We have the following
properties for a $p$-basic subgroup $B$ of a group $A\in\mathcal{T}_{p}$. 
\begin{enumerate}
\item If $B$ is bounded and $A$ is reduced, then $A=B$. Indeed, if $B$
is bounded, then it follows from (ii) that $B$ is a direct summand
of $A$ (see \cite[Chapter 5, Theorem 2.5]{fuchs2015abelian}). And
hence, since $A$ is reduced, it follows from (iii) that $A=B$. 
\item In particular, if $A$ is reduced and not bounded, then $B$ is
not bounded. 
\item Observe that, if $B$ is not bounded, then $B$ has a quotient isomorphic
to $\bigoplus_{n\in\mathbb{N}}\mathbb{Z}(p^{n})$. 
\end{enumerate}
\end{fact}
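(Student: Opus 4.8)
The plan is to handle the three assertions in turn, observing that (a) and (b) are already settled by the reasoning attached to the statement, so that only (c) calls for a genuine construction. For (a), property (ii) says precisely that $B$ is pure in $A$; a bounded pure subgroup is a direct summand, so $A = B \oplus C$ and $A/B \cong C$, which is divisible by (iii). A divisible $p$-group is injective, so $C$ is an injective subobject of the reduced group $A$, whence $C = 0$ and $A = B$. Assertion (b) is then the contrapositive of (a): if $A$ is reduced and $B$ were bounded, (a) would force $A = B$ to be bounded, contradicting the hypothesis, so $B$ must be unbounded.

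For (c), I would first use (i) to write $B = \bigoplus_{i \in I} \mathbb{Z}(p^{k_i})$ as a coproduct of cyclic $p$-groups with exponents $k_i \geq 1$. Saying that $B$ is unbounded is exactly saying that the family $\{k_i\}_{i \in I}$ admits no upper bound, i.e. for every integer $K$ there is an index $i$ with $k_i > K$. I would then select indices recursively: choose any $i_1$, and, having chosen $i_1, \dots, i_{n-1}$ with largest exponent $K$, invoke unboundedness to pick $i_n$ with $k_{i_n} > \max\{K, n-1\}$. The resulting exponents satisfy $k_{i_1} < k_{i_2} < \cdots$, so the indices are automatically pairwise distinct, and a trivial induction yields $k_{i_n} \geq n$ for every $n$.

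Finally I would define $\phi \colon B \to \bigoplus_{n \in \mathbb{N}} \mathbb{Z}(p^n)$ via the universal property of the coproduct: on the summand indexed by $i_n$ take $\phi$ to be the canonical reduction $\mathbb{Z}(p^{k_{i_n}}) \twoheadrightarrow \mathbb{Z}(p^n)$, which is well defined and surjective because $k_{i_n} \geq n$, and send every summand whose index is not one of the $i_n$ to $0$. Every factor $\mathbb{Z}(p^n)$ of the codomain then lies in the image, so $\phi$ is surjective and exhibits $\bigoplus_{n \in \mathbb{N}} \mathbb{Z}(p^n)$ as a quotient of $B$, as required. The one delicate point is the recursive choice of indices, where one must guarantee simultaneously that the chosen summands are distinct and that the $n$-th has exponent at least $n$; forcing the exponents to increase strictly disposes of both requirements at once, and everything else is routine.
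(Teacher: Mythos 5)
Your proposal is correct and follows essentially the same route as the paper: for (a) you invoke the same bounded-pure-subgroup-is-a-summand theorem and the same divisibility/reducedness argument, and (b) is the same contrapositive. For (c), which the paper leaves as a mere observation, your explicit construction (choosing summands with strictly increasing exponents $k_{i_n}\geq n$ and mapping each $\mathbb{Z}(p^{k_{i_n}})$ onto $\mathbb{Z}(p^{n})$ by the canonical reduction) is the standard argument the authors had in mind, and it is carried out correctly.
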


\begin{thm}
\label{thm:main}The following statements are equivalent for $T\in\mathcal{T}_{Z}$: 
\begin{enumerate}
\item $T$ is co-$\operatorname{Ext}^{1}$-universal in $\mathcal{T}_{Z}$; 
\item $t_{p}(T)$ is co-$\operatorname{Ext}^{1}$-universal in $\mathcal{T}_{p}$,
for every prime number $p$;
\item $t_{p}(T)=D_{p}\oplus R_{p}$, where $D_{p}$ is injective and $R_{p}$
is reduced and bounded, for every prime number $p$.
\end{enumerate}
\end{thm}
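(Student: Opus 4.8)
The plan is to prove the two equivalences $(a)\Leftrightarrow(b)$ and $(b)\Leftrightarrow(c)$ separately, exploiting the primary decomposition $T=\bigoplus_{p}t_{p}(T)$ together with the characterization of co-$\operatorname{Ext}^{1}$-universal objects supplied by Proposition \ref{prop:VX cociente torsion es divisible}, namely that an object $S$ of $\mathcal{T}$ is co-$\operatorname{Ext}^{1}$-universal exactly when $S^{X}/\overline{t}(S^{X})$ is injective in $\Ab$ for every set $X$. Throughout I write $T_{q}:=t_{q}(T)$.

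For $(b)\Leftrightarrow(c)$, the ``$p$-local'' statement, I would fix a prime $p$, set $S:=T_{p}\in\mathcal{T}_{p}$, and write $S=D_{p}\oplus R_{p}$ with $D_{p}$ the maximal divisible (hence injective) subgroup and $R_{p}$ reduced. Since products commute with the finite biproduct, one gets $S^{X}/t_{p}(S^{X})\cong\bigl(D_{p}^{X}/t_{p}(D_{p}^{X})\bigr)\oplus\bigl(R_{p}^{X}/t_{p}(R_{p}^{X})\bigr)$, and $D_{p}^{X}/t_{p}(D_{p}^{X})$ is always injective; so by Proposition \ref{prop:VX cociente torsion es divisible} the group $S$ is co-$\operatorname{Ext}^{1}$-universal if and only if $R_{p}$ is. The direction $(c)\Rightarrow(b)$ is then immediate, because if $R_{p}$ is bounded then $S=D_{p}\oplus R_{p}$ is a torsion cotorsion group (this is precisely the Baer--Fomin normal form), and Proposition \ref{prop:torsion-cotorsionesco-ext-universal} applies. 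The substantive content is $(b)\Rightarrow(c)$: I must show that a reduced \emph{unbounded} $R_{p}$ cannot be co-$\operatorname{Ext}^{1}$-universal.

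For this crux I would argue by contraposition, directly mimicking Corollary \ref{exa:contraejemplo }, by exhibiting a set $X$ for which $R_{p}^{X}/t_{p}(R_{p}^{X})$ is not divisible, hence not injective. Let $B$ be a $p$-basic subgroup of $R_{p}$; by Fact \ref{fact:basic}(b) it is unbounded, so I may choose generators $a_{i}$ of cyclic summands of $B$ of orders $p^{n_{i}}$ with $n_{1}<n_{2}<\cdots$. Because $B$ is pure in $R_{p}$ (property (ii) of Fact \ref{fact:basic}), one has $p^{m+1}R_{p}\cap B=p^{m+1}B$, and consequently $p^{m}a_{i}\notin p^{m+1}R_{p}$ whenever $n_{i}>m$. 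Taking $X=\mathbb{N}$ and $x:=(a_{i})_{i}\in R_{p}^{\mathbb{N}}$, the unboundedness of the orders gives $x\notin t_{p}(R_{p}^{\mathbb{N}})$, while the height computation shows $\bar{x}$ is not divisible by $p$ in $R_{p}^{\mathbb{N}}/t_{p}(R_{p}^{\mathbb{N}})$: an equality $\bar{x}=p\bar{y}$ would yield $p^{m}x\in p^{m+1}R_{p}^{\mathbb{N}}$ for some $m$, forcing $p^{m}a_{i}\in p^{m+1}R_{p}\cap B=p^{m+1}B$ for every $i$, which contradicts the choice of $a_{i}$ as soon as $n_{i}>m$. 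Hence $R_{p}^{\mathbb{N}}/t_{p}(R_{p}^{\mathbb{N}})$ is not injective and $R_{p}$ is not co-$\operatorname{Ext}^{1}$-universal by Proposition \ref{prop:VX cociente torsion es divisible}. I expect this to be the main obstacle, since it is exactly here that purity of the basic subgroup must be used to transfer height information from $B$ to the whole power $R_{p}^{\mathbb{N}}$.

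Finally, for $(a)\Leftrightarrow(b)$ I would show that divisibility of the torsion-free quotient of a power decomposes over primes. Applying Proposition \ref{prop:VX cociente torsion es divisible} on both sides, $(a)$ is equivalent to ``$T^{X}/t(T^{X})$ is divisible for every $X$'' and $(b)$ to ``$T_{q}^{X}/t(T_{q}^{X})$ is divisible for every prime $q$ and every $X$.'' The key observation is that, for a fixed prime $q$, the summand $\bigoplus_{p\neq q}T_{p}$ is uniquely $q$-divisible, so in $T^{X}/t(T^{X})$ the obstruction to $q$-divisibility lies entirely in the image of $T_{q}^{X}$; projecting onto the $q$-component shows that $q$-divisibility of $T^{X}/t(T^{X})$ is equivalent to $q$-divisibility of $T_{q}^{X}/t(T_{q}^{X})$. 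Since each $T_{q}^{X}/t(T_{q}^{X})$ is automatically $q'$-divisible for every $q'\neq q$, it is divisible precisely when it is $q$-divisible; assembling these equivalences over all primes $q$ yields $(a)\Leftrightarrow(b)$. Combining the two equivalences then completes the proof.
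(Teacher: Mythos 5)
Your proof is correct, and it is worth recording where it diverges from the paper's, because on one step it is actually sounder. The overall skeleton is the same: both proofs funnel everything through Proposition \ref{prop:VX cociente torsion es divisible}, decompose $T=\bigoplus_{p}t_{p}(T)$, and use basic subgroups for the unbounded case; but all three implications are executed differently. (i) For $(a)\Leftrightarrow(b)$ the paper asserts the isomorphism $T^{X}/t(T^{X})\cong\bigoplus_{p}t_{p}(T)^{X}/t\bigl(t_{p}(T)^{X}\bigr)$; taken literally this fails for infinite $X$: for $T=\bigoplus_{p}\mathbb{Z}(p)$ and $X=\mathbb{N}$ each $\mathbb{Z}(p)^{\mathbb{N}}$ is torsion, so the right-hand side is $0$, while the element of $T^{\mathbb{N}}$ whose $n$-th coordinate generates the $\mathbb{Z}(p_{n})$-summand (with $p_{n}$ the $n$-th prime) has infinite order, so the left-hand side is not $0$. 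The conclusion the paper extracts from this isomorphism (injectivity of $T^{X}/t(T^{X})$ is equivalent to injectivity of all $t_{p}(T)^{X}/t\bigl(t_{p}(T)^{X}\bigr)$) is nonetheless true, and your localization argument --- multiplication by $q$ is an automorphism of $\bigoplus_{p\neq q}t_{p}(T)$ and hence of its $X$-th powers, so $q$-divisibility of $T^{X}/t(T^{X})$ is governed by the summand $t_{q}(T)^{X}/t\bigl(t_{q}(T)^{X}\bigr)$, which is automatically $q'$-divisible for $q'\neq q$ --- is precisely the justification needed; on this step your blind proof repairs the paper's. (ii) For the crux $(b)\Rightarrow(c)$ the paper is more modular: an unbounded reduced $R_{p}$ has a quotient isomorphic to $\bigoplus_{n}\mathbb{Z}(p^{n})$ (Fuchs plus Fact \ref{fact:basic}(c)), co-$\operatorname{Ext}^{1}$-universality passes to quotients by Proposition \ref{prop:VX cociente torsion es divisible}(a,c), and Corollary \ref{exa:contraejemplo } gives the contradiction. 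You instead inline and generalize the height computation behind that corollary, using purity of the basic subgroup, $p^{k}B=B\cap p^{k}R_{p}$, to transfer heights from $B$ to $R_{p}^{\mathbb{N}}$; this is more self-contained (no quotient-realization theorem, no part (c) of the proposition), at the price of redoing a computation the paper reuses. (iii) For $(c)\Rightarrow(b)$ the paper just observes that $R_{p}^{X}$ is torsion when $R_{p}$ is bounded, so the relevant quotient vanishes; your detour through Baer--Fomin and Proposition \ref{prop:torsion-cotorsionesco-ext-universal} also works, with the caveat that under the paper's literal definition of cotorsion (vanishing of $\operatorname{Ext}^{1}$ against $\mathbb{Q}/\mathbb{Z}$ rather than against $\mathbb{Q}$) bounded groups are not cotorsion, since $\operatorname{Ext}^{1}\bigl(\mathbb{Z}(p^{\infty}),\mathbb{Z}(p^{n})\bigr)\cong\mathbb{Z}(p^{n})\neq0$; the boundedness observation sidesteps that issue.
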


\begin{proof}
Let $\mathcal{P}$ be the set of prime numbers. We put $T_{p}:=t_{p}(T)$,
for every $p\in\mathcal{P}$; and note that $T=\bigoplus_{p\in\mathcal{P}}T_{p}$.

$(a)\Leftrightarrow(b)$ Let $X$ be a set. Observe that $T^{X}/t(T^{X})\cong\bigoplus_{p\in\mathcal{P}}T_{p}^{X}/t(T_{p}^{X})$.
And hence, $T^{X}/t(T^{X})$ is injective if and only if $T_{p}^{X}/t(T_{p}^{X})$
is injective for all $p\in\mathcal{P}$ (see \cite[Chapter 4, Theorem 3.1]{fuchs2015abelian}).
Therefore, the equivalence $(a)\Leftrightarrow(b)$ follows from Proposition
\ref{prop:VX cociente torsion es divisible}(a,b). 

$(b)\Rightarrow(c)$ Let $p\in\mathcal{P}$ and $T_{p}=D_{p}\oplus R_{p}$,
where $D_{p}$ is injective and $R_{p}$ is reduced. If $R_{p}$ is
not bounded, then it follows from \cite[Chapter 5, Theorem 6.10]{fuchs2015abelian}
and Fact \ref{fact:basic}(c) that there is a quotient of $R_{p}$
isomorphic to $\bigoplus_{n\in\mathbb{N}}\mathbb{Z}(p^{n})$. And
hence, by Proposition \ref{prop:VX cociente torsion es divisible}(a,c),
we get that $\bigoplus_{n\in\mathbb{N}}\mathbb{Z}(p^{n})$ is co-$\operatorname{Ext}^{1}$-universal
in $\mathcal{T}$, which contradicts Corollary \ref{exa:contraejemplo }. 

$(c)\Rightarrow(b)$ Let $p\in\mathcal{P}$ and $T_{p}=D_{p}\oplus R_{p}$,
where $D_{p}$ is injective and $R_{p}$ is reduced and bounded. By
Corollary \ref{cor:coproducto de universales} and Proposition \ref{prop:VX cociente torsion es divisible}(a,b),
it is enough to prove that $R_{p}$ is co-$\operatorname{Ext}^{1}$-universal
in $\mathcal{T}_{Z}$. And this follows from Proposition \ref{prop:VX cociente torsion es divisible}(a,b)
since $R_{p}^{X}=t(R_{p}^{X})$ for every set $X$. 
\end{proof}

\section*{Acknowledgments}

The research presented in this article began in December 2021, when
the first named author was a postdoctoral fellow at the Instituto
de Matem\'atica y Estad\'istica Rafael Laguardia. The research project
was continued and completed when the first author started a postdoctoral
stay at Centro de Ciencias Matem\'aticas, UNAM Campus Morelia, in
April 2022. The first author would like to thank all the academic
and administrative staff of these institutions for their warm hospitality,
and in particular Raymundo Bautista (CCM, UNAM), Marcelo Lanzilotta
(IMERL) and Marco A. P\'erez (IMERL) for all their support.

\bibliographystyle{abbrv}
\bibliography{Bibliografia}
\end{document}